\renewcommand*{\backref}[1]{}
\renewcommand*{\backrefalt}[4]{
  \ifcase #1
  [No citations.]
  \or [#2]
  \else [#2]
  \fi }
   \def\MR#1{}
\numberwithin{equation}{section}
\theoremstyle{plain}
\newtheorem{theorem}[equation]{Theorem}
\newtheorem{thm}[equation]{Theorem}
\newtheorem{corollary}[equation]{Corollary}
\newtheorem{lemma}[equation]{Lemma}
\newtheorem{lem}[equation]{Lemma}
\newtheorem{prop}[equation]{Proposition}
\newtheorem*{namedtheorem}{\theoremname}
\newcommand{\theoremname}{testing}
\theoremstyle{definition}
\newtheorem{defn}[equation]{Definition}
\newtheorem{remark}[equation]{Remark}
\newcommand{\To}{\longrightarrow}
\newcommand{\C}{\mathbb{C}}
\newcommand{\CC}{\mathbb{C}}
\newcommand{\CP}{\mathbb{CP}}
\newcommand{\e}{\mathbf{e}}
\newcommand{\f}{\mathbf{f}}
\newcommand{\F}{\mathcal{F}}
\newcommand{\HH}{{\mathbb{H}}}
\newcommand{\half}{\frac{1}{2}}
\newcommand{\mfc}{\mathfrak{c}}
\newcommand{\mfl}{\mathfrak{l}}
\newcommand{\mfm}{\mathfrak{m}}
\newcommand{\Q}{\mathbb{Q}}
\newcommand{\R}{\mathbb{R}}
\newcommand{\RR}{\mathbb{R}}
\newcommand{\T}{\mathbb{T}}
\newcommand{\TT}{\mathcal{T}}
\newcommand{\Z}{\mathbb{Z}}
\newcommand{\calL}{\mathcal{L}}
\newcommand{\calM}{\mathcal{M}}
\DeclareMathOperator{\Span}{Span}
\newcommand{\In}{\mathrm{In}}
\newcommand{\NZ}{\mathrm{NZ}}
\newcommand{\SY}{\mathrm{SY}}
\newcommand{\PSL}{{\mathrm{PSL}}}
\newcommand{\SL}{{\mathrm{SL}}}
\newcommand{\Sp}{{\mathrm{Sp}}}
\newcommand{\bdy}{\partial}
\newcommand{\refsec}[1]{Section~\ref{Sec:#1}}
\newcommand{\refdef}[1]{Definition~\ref{Def:#1}}
\newcommand{\reffig}[1]{Figure~\ref{Fig:#1}}
\newcommand{\refeqn}[1]{\eqref{Eqn:#1}}
\newcommand{\reflem}[1]{Lemma~\ref{Lem:#1}}
\newcommand{\refprop}[1]{Proposition~\ref{Prop:#1}}
\newcommand{\refthm}[1]{Theorem~\ref{Thm:#1}}
\newcommand{\refcor}[1]{Corollary~\ref{Cor:#1}}
\begin{document}

\title{A-polynomials, Ptolemy equations and Dehn filling} 

\author{Joshua A. Howie}
\address{School of Mathematics,
Monash University,
VIC 3800, Australia}
\email{josh.howie@monash.edu}

\author{Daniel V. Mathews} 
\address{School of Mathematics,
Monash University,
VIC 3800, Australia}
\email{Daniel.Mathews@monash.edu}

\author{Jessica S. Purcell}
\address{School of Mathematics,
Monash University,
VIC 3800, Australia}
\email{jessica.purcell@monash.edu}

%\date{}

\begin{abstract}
The A-polynomial encodes hyperbolic geometric information on knots and related manifolds. Historically, it has been difficult to compute, and particularly difficult to determine A-polynomials of infinite families of knots. Here, we compute A-polynomials by starting with a triangulation of a manifold, then using symplectic properties of the Neumann-Zagier matrix encoding the gluings to change the basis of the computation. The result is a simplicifation of the defining equations.  We apply this method to families of manifolds obtained by Dehn filling, and show that the defining equations of their A-polynomials are Ptolemy equations which, up to signs, are equations between cluster variables in the cluster algebra of the cusp torus. 
\end{abstract}

\maketitle

\section{Introduction}

The A-polynomial is a polynomial associated to a knot that encodes a great deal of geometric information. It is closely related to deformations of hyperbolic structures on knots, originally explored by Thurston~\cite{thurston}. Such deformations give rise to a one complex parameter family of representations of the knot group into $\SL(2,\CC)$. All representations form the \emph{representation variety}, which was originally studied in pioneering work of Culler and Shalen~\cite{CullerShalen:Varieties, CullerShalen:Surfaces,CGLS:DehnSurgery}, and remains a very active area of research; see \cite{shalen:survey} for a survey. However representation varieties are difficult to compute, and often have complicated topology. In the 1990s, Cooper, Culler, Gillet, Long and Shalen realised that a representation variety could be projected onto $\CC^2$ using the longitude and meridian of the knot~\cite{CCGLS}, with a simpler image. The image is given by the zero set of a polynomial in two variables, up to scaling. This is the A-polynomial.

Among its geometric properties, the A-polynomial detects many incompressible surfaces, and gives information on cusp shapes and volumes~\cite{CCGLS,CooperLong:Apoly}. It has relations to Mahler measure~\cite{Boyd:Mahler}, and appears in quantum topology through the AJ-conjecture~\cite{garoufalidis:AJ-conj, garoufalidisLe, FrohmanGelcaLofaro}. 
Unfortunately, A-polynomials are also difficult to compute. Unlike other knot polynomials, there are no skein relations to determine them. Originally, they were computed by finding polynomial equations from a matrix presentation of a representation, and then using resultants or Groebner bases to eliminate variables; see, for example~\cite{CooperLong:Apoly}. Unlike other knot polynomials, they are known only for a handful of infinite examples, including twist knots, some double twist knots, and small families of 2-bridge knots~\cite{HosteShanahan04,Mathews:A-poly_twist_knots, Mathews:A-poly_twist_knots_err, HamLee, Petersen:DoubleTwist, Tran}, some pretzel knots~\cite{TamuraYokota,GaroufalidisMattman}, and cabled knots and iterated torus knots~\cite{NiZhang}. Culler has computed A-polynomials for all knots with up to eight crossings, most nine-crossing knots, many ten-crossing knots, and all knots that can be triangulated with up to seven ideal tetrahedra~\cite{Culler}. 

This paper gives a simplified method for determining A-polynomials, especially for infinite families of knots obtained by Dehn filling. Our method is to change the variables in the defining equations. Typically, defining equations for A-polynomials have high degree in the variables to eliminate, making them computationally difficult. Under a change of variables, we show that all such equations can be expressed in degree two in the variables to eliminate. For families of knots obtained by Dehn filling, even more can be said. There will be a finite, fixed number of ``outside equations'', and a sequence of equations determined completely by the slope of the Dehn filling. All such equations exhibit Ptolemy-like properties, with very similar behaviours to cluster algebras. We expect the method to greatly improve our ability to compute families of A-polynomials. Indeed, of all the known examples of infinite families of A-polynomials above, all except the cabled knots and iterated torus knots are obtained by Dehn filling a fixed parent manifold. 

\subsection{Computing the A-polynomial}
Champanerkar introduced a geometric way to compute the A-polynomial based on a triangulation of a knot complement~\cite{Champanerkar:Thesis}. His method is to start with a collection of equations --- one gluing equation for each edge of the triangulation, and two equations for the cusp --- and eliminate variables. 
The coefficients in the gluing and cusp equations are effectively the entries in the Neumann--Zagier matrix~\cite{NeumannZagier}. This matrix has interesting symplectic properties: its rows form part of a standard symplectic basis for a symplectic vector space. Dimofte~\cite{DimofteQRCS,DvdV:Spectral} considered extending this collection of vectors into a standard basis for $\RR^{2n}$, and then changing the basis. This yields a change of variables, and an equivalent set of equations. Eliminating variables again yields (up to technicalities) the A-polynomial; effectively this can be considered a process of symplectic reduction.

There are a few issues with Dimofte's calculations that have made them difficult to use in practice. First, the result appears in physics literature, which makes it somewhat difficult for mathematicians to read. More importantly, to carefully perform the change of basis, in particular to nail down the correct signs in the defining equations, a priori one needs to determine the symplectic dual vectors to the vectors arising from gluing equations. These are not only nontrivial to compute, but also highly non-unique. Only after obtaining such vectors can one invert a large symplectic matrix. 

In this paper, we overcome these issues. Using work of Neumann~\cite{Neumann}, we show that we may ``invert without inverting.'' That is, we show that Dimofte's symplectic reduction can be read off of ingredients already present in the Neumann--Zagier matrix, without having to compute symplectic dual vectors. As a result, we may convert Champanerkar's (possibly complicated) equations into simpler equations that have Ptolemy-like structure.

There are other ways to compute A-polynomials. Zickert introduced one in his work on extended Ptolemy varieties~\cite{Zickert:PtolemyDehnA-poly, GTZ}, inspired by Fock and Goncharov~\cite{FockGoncharov06}. Their work also starts with a triangulation, but in the case of interest assigns six variables per tetrahedron, and relates these by what are called Ptolemy relations and identification relations. After an appropriate transformation, the corresponding variables satisfy gluing equations; see \cite[Section~12]{GTZ}. Zickert notes a ``fundamental duality'' between Ptolemy coordinates and gluing equations in  \cite[Remark~1.13]{Zickert:PtolemyDehnA-poly}. However, it is not clear why the duality arises. The equations we find in this paper are similar to the defining equations of Zickert, but with fewer variables. We expect that the results of this paper may provide a connection to two very different approaches to calculating A-polynomials. While we do not show that the methods of that paper and this one are equivalent, we conjecture that they are, and thus the techniques here may provide a geometric, symplectic explanation for the ``fundamental duality''.

\subsection{Neumann--Zagier matrices and the main theorem}

Let $M$ be a hyperbolic 3-manifold with a triangulation. Then it has an associated Neumann--Zagier matrix, which we will denote by $\NZ$. The properties of $\NZ$ are reviewed in Section~\ref{Sec:symplectic}. 
In short, gluing and cusp equations give a system of the form $\NZ \cdot Z = H+i\pi C$, where $Z$ is a vector of variables related to tetrahedra, and $H$ and $C$ are both vectors of constants. 

Neumann and Zagier showed that if $M$ has one cusp, then the $n$ rows of $\NZ$ corresponding to gluing equations have rank $n-1$. Thus a row can be removed, leaving $n-1$ linearly independent rows. Denote the matrix given by removing such a row of $\NZ$ by $\NZ^{\flat}$, and similarly denote the vector obtained from $C$ by removing the corresponding row by $C^{\flat}$. We will refer to $\NZ^{\flat}$ as the \emph{reduced Neumann--Zagier matrix}. The vector $C^\flat$ is called the \emph{sign vector}.
We will show that, after possibly relabelling the tetrahedra of a triangulation, we may assume one of the entries of $C^{\flat}$ corresponding to a gluing equation is nonzero. 
Neumann has shown that there always exists an integer vector $B$ such that $\NZ^{\flat}\cdot B = C^{\flat}$ ~\cite{Neumann}.

To state the main theorem, we introduce a little more notation.
The last two rows of the
matrix $\NZ^{\flat}$ correspond to cusp equations associated to the meridian and longitude. For ease of notation, we will denote the entries in the row associated to the meridian and longitude, respectively, by
\[ \begin{pmatrix} \mu_1 & \mu_1' & \mu_2 & \mu_2' & \dots
  \end{pmatrix} \mbox{ and }
\begin{pmatrix} \lambda_1 & \lambda_1' & \lambda_2 & \lambda_2' & \dots
\end{pmatrix}.
\] 
Finally, suppose the edges of the tetrahedra are glued into $n$ edges $E_1, \dots, E_n$. Label the ideal vertices of each tetrahedron $0$, $1$, $2$, and $3$, with $1$, $2$, $3$ in anti-clockwise order when viewed from $0$. Then there are six edges, each labeled by a pair of integers $\alpha\beta\in \{01,02,03,12,13,23\}$. For the $j$th tetrahedron, let $j(\alpha\beta)$ denote the index of the edge class to which that edge is identified. That is, if the edge $\alpha\beta$ is glued to $E_k$, then $j(\alpha\beta)=k$. 

\begin{theorem}\label{Thm:MainPtolemy}
Let $M$ be a one-cusped manifold with a hyperbolic triangulation $\TT$, with associated reduced Neumann--Zagier matrix $\NZ^{\flat}$ and sign vector $C^{\flat}$ as above.
Also as above, denote the entries of the last two rows of $\NZ^\flat$ by $\mu_j, \mu_j'$ in the row corresponding to the meridian, and $\lambda_j$, $\lambda_j'$ in the row corresponding to the longitude. Let $B=(B_1, B_1', B_2, B_2', \dots)$ be an integer vector such that $\NZ^{\flat}\cdot B = C^{\flat}$.

Define formal variables $\gamma_1, \dots, \gamma_n$, one associated with each edge of $\TT$. For a tetrahedron $\Delta_j$ of $\TT$, and edge $\alpha\beta\in \{01,02,03,12,13,23\}$, define $\gamma_{j(\alpha\beta)}$ to be the variable $\gamma_k$ such that the edge of $\Delta_j$ between vertices $\alpha$ and $\beta$ is glued to the edge of $\TT$ associated with $\gamma_k$. 

For each tetrahedron $\Delta_j$ of $\TT$, define the \emph{Ptolemy equation} of $\Delta_j$ by
\[
\left( -1 \right)^{B'_j} \ell^{-\mu_j/2} m^{\lambda_j/2} \gamma_{j(01)} \gamma_{j(23)}
+
\left( - 1 \right)^{B_j} \ell^{-\mu'_j/2} m^{\lambda'_j/2} \gamma_{j(02)} \gamma_{j(13)}
-
\gamma_{j(03)} \gamma_{j(12)}
= 0.
\]

When we solve the system of Ptolemy equations of $\TT$ in terms of $m$ and $\ell$, setting $\gamma_n = 1$ and eliminating the variables $\gamma_1, \ldots, \gamma_{n-1}$, we obtain a factor of the $\PSL(2,\C)$ A-polynomial. 
\end{theorem}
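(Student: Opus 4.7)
The plan is to realize the system of Ptolemy equations as a change-of-variables reformulation of Champanerkar's defining system for the $\PSL(2,\C)$ A-polynomial, after which the elimination of the $\gamma_j$'s becomes literally Champanerkar's elimination of the shape parameters. Recall the setup: Champanerkar's system consists of $n$ gluing equations (one per edge of $\TT$, one being redundant), together with two cusp equations along the meridian and longitude; the exponents are the rows of $\NZ$ and the signs are the corresponding entries of $C$. Each gluing equation has the multiplicative form $\prod_j z_j^{a_{kj}}(z_j')^{a_{kj}'}(z_j'')^{a_{kj}''} = (-1)^{C_k^\flat}$, supplemented by $z_j z_j' z_j'' = -1$ and the fundamental tetrahedron identity $z_j' = 1/(1-z_j)$, equivalently $z_j + 1/z_j' = 1$. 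Eliminating the $z_j$'s, subject to the cusp equations expressing $m^2$ and $\ell^2$ as signed monomials in the shape parameters, produces a factor of the $\PSL(2,\C)$ A-polynomial.

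I would then perform the change of variables
\[
z_j = (-1)^{B_j'}\,\ell^{-\mu_j/2}\,m^{\lambda_j/2}\,\frac{\gamma_{j(01)}\gamma_{j(23)}}{\gamma_{j(03)}\gamma_{j(12)}}, \qquad z_j' = (-1)^{B_j}\,\ell^{\mu_j'/2}\,m^{-\lambda_j'/2}\,\frac{\gamma_{j(03)}\gamma_{j(12)}}{\gamma_{j(02)}\gamma_{j(13)}},
\]
so that $z_j'' = -1/(z_j z_j')$ is also a signed monomial in the $\gamma_k$'s and half-integer powers of $m, \ell$. A short calculation shows that after this substitution, the fundamental tetrahedron identity $z_j + 1/z_j' = 1$ becomes, after clearing denominators, exactly the Ptolemy equation of $\Delta_j$ as stated. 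Thus the $n$ fundamental tetrahedron identities are precisely the $n$ Ptolemy equations.

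The main step is to verify that under this substitution the $n-1$ remaining gluing equations and the two cusp equations become tautologies. Each is multiplicative, and after substitution reduces to the product of a sign, a monomial in $\gamma_1,\dots,\gamma_n$, and half-integer powers of $m, \ell$, which must collectively equal the desired right-hand side. The exponents of each $\gamma_l$ cancel by a combinatorial identity: the ``opposite-pair'' structure $(01,23), (02,13), (03,12)$ organizes the $\pm 1$ contributions of $\gamma_l$ in $z_j, z_j', z_j''$ so that the total around each gluing cycle is zero. The exponents of $m$ and $\ell$ cancel precisely because the gluing rows of $\NZ^\flat$ are symplectically orthogonal to the meridian and longitude rows---this is the Neumann--Zagier symplectic property. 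The sign contribution from the $k$-th gluing equation is $(-1)^{(\NZ^\flat B)_k}$, which by the hypothesis $\NZ^\flat B = C^\flat$ equals $(-1)^{C_k^\flat}$, the required sign. The two cusp equations are handled in the same way, reducing to the tautologies $m = m$ and $\ell = \ell$ built into the $m, \ell$ factors of the substitution. Finally, the Ptolemy equations are homogeneous of degree two in the $\gamma_j$'s, so setting $\gamma_n = 1$ fixes the overall scaling; eliminating $\gamma_1, \dots, \gamma_{n-1}$ from the $n$ Ptolemy equations thus matches Champanerkar's elimination of the $z_j$'s and produces a factor of the $\PSL(2,\C)$ A-polynomial.

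The main obstacle, as expected, is the sign bookkeeping in the verification above: one must show that across the $k$-th gluing equation, the $(-1)^{B_j'}$ from each $z_j$ substitution and $(-1)^{B_j}$ from each $z_j'$ substitution combine with the row-$k$ exponents $a_{kj}, a_{kj}', a_{kj}''$ to total exactly $(-1)^{C_k^\flat}$. This is the heart of Dimofte's symplectic reduction, carried out directly on $\NZ^\flat$ rather than on its full symplectic completion---the ``inversion without inverting'' alluded to in the introduction---and depends essentially on Neumann's result that an integer vector $B$ satisfying $\NZ^\flat B = C^\flat$ always exists.
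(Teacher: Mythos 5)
Your overall strategy — substitute for $z_j, z'_j$ in terms of the $\gamma$ variables, show the tetrahedron relation $z_j + (z_j')^{-1} = 1$ becomes the Ptolemy equation, and verify that the gluing and cusp equations reduce to identities via the Neumann--Zagier symplectic orthogonality — is the same as the paper's. However, there are two genuine problems.

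\textbf{The substitution formulas are wrong.} You have written
\[
z_j = (-1)^{B_j'}\,\ell^{-\mu_j/2}\,m^{\lambda_j/2}\,\frac{\gamma_{j(01)}\gamma_{j(23)}}{\gamma_{j(03)}\gamma_{j(12)}},
\qquad
z_j' = (-1)^{B_j}\,\ell^{\mu_j'/2}\,m^{-\lambda_j'/2}\,\frac{\gamma_{j(03)}\gamma_{j(12)}}{\gamma_{j(02)}\gamma_{j(13)}}.
\]
These are in fact $(z_j')^{-1}$ and $z_j^{-1}$ respectively with respect to the correct change of variables, which (exponentiating $Z = (-J)(\NZ^\flat)^T\overline\Gamma + \pi i B$) is
\[
z_j = (-1)^{B_j}\,\ell^{-\mu'_j/2}\,m^{\lambda'_j/2}\,\frac{\gamma_{j(02)}\gamma_{j(13)}}{\gamma_{j(03)}\gamma_{j(12)}},
\qquad
z_j' = (-1)^{B'_j}\,\ell^{\mu_j/2}\,m^{-\lambda_j/2}\,\frac{\gamma_{j(03)}\gamma_{j(12)}}{\gamma_{j(01)}\gamma_{j(23)}}.
\]
Because the tetrahedron identity $z + (z')^{-1} = 1$ is preserved under the relabelling $z\mapsto (z')^{-1}$, $z'\mapsto z^{-1}$, your formulas still yield the stated Ptolemy equation for $\Delta_j$; but they do \emph{not} yield tautologies when substituted into the gluing equations. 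Concretely, in the $k$-th gluing equation $\prod_j z_j^{d_{k,j}}(z_j')^{d'_{k,j}} = (-1)^{c_k}$, the exponent of $m$ under your substitution is $\tfrac12\sum_j(\lambda_j d_{k,j} - \lambda'_j d'_{k,j})$, which is not the symplectic pairing $\omega(R^G_k, R^\mfl) = \sum_j(d_{k,j}\lambda'_j - d'_{k,j}\lambda_j)$ and does not vanish; similarly the sign collected is $(-1)^{\sum_j(B'_j d_{k,j} + B_j d'_{k,j})}$, which is not $(-1)^{R^G_k\cdot B} = (-1)^{C^\flat_k}$. So with your formulas the ``main step'' fails. (With the corrected formulas it succeeds, precisely because the $m$, $\ell$, and $\gamma_l$ exponents assemble into $\omega(R^G_k, R^\mfl)$, $-\omega(R^G_k, R^\mfm)$, and $-\omega(R^G_k, R^G_l)$, and the sign into $(-1)^{R^G_k\cdot B} = (-1)^{C^\flat_k}$.)

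\textbf{Only one inclusion is established.} Even with the corrected formulas, you prove that every $(\gamma, m, \ell)$ satisfying the Ptolemy equations yields, via substitution, a solution of Champanerkar's system with the same $(m,\ell)$. This gives containment of the eliminated $(m,\ell)$-loci, not equality. To conclude that the eliminated polynomial is the same factor of the A-polynomial, you need the converse: every solution $(z,z',m,\ell)$ of the gluing, cusp, and tetrahedron relations is the image of some $(\gamma, m, \ell)$. The paper establishes this equivalence by exhibiting an invertible symplectic matrix $\SY$ (Proposition 2.20 / \refprop{ZConvert}), whose existence uses the freedom in choosing the dual rows $R^\Gamma_k$ (\reflem{AnyNeumannVectorWorks}) and requires the labelling condition of \reflem{GoodLabelling} (some edge has $c_k \neq 2$). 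Your proposal does not address this converse direction at all. One could alternatively close the gap by a dimension count — the image of the affine map $\overline\Gamma \mapsto -J(\NZ^\flat)^T\overline\Gamma + \pi iB$ has complex dimension $n+1$ since $\NZ^\flat$ has full rank $n+1$, matching the dimension of the gluing/cusp solution affine subspace, so containment plus dimension gives equality — but neither this nor the paper's symplectic argument appears in your writeup.
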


In fact, we obtain the same factor as Champanerkar. The precise version of this theorem is contained in \refthm{Ptolemy_Apoly} below.

\begin{remark}
Observe that 
the Ptolemy equations above are always quadratic in the variables $\gamma_j$. Moreover, their form indicates intriguing algebraic structure that is not readily apparent from the gluing equations.
\end{remark}

We find the simplicity and the algebraic structure of the equations of \refthm{MainPtolemy} to be a major feature of this paper. The defining equations of the A-polynomial are quite simple! 
We note that using these equations requires finding the vector $B$ of \refthm{MainPtolemy}. This is a problem in linear Diophantine equations. Because $B$ is guaranteed to exist, it can be found by computing the Smith normal form of the matrix $\NZ$ (see, for example, Chapter~II.21(c) of \cite{Newman:IntegralMatrices}). In practice, we were able to find $B$ for examples with significantly less work.

\begin{remark}\label{Rmk:Projective}
The $\gamma$ variables in \refthm{MainPtolemy} are precisely Dimofte's $\gamma$ variables of \cite{DimofteQRCS}, and these Ptolemy equations are essentially equivalent to those of that paper.

The word ``equivalent" here conceals a projective subtlety. The gluing and cusp equations are a set of $n+2$ equations in $n$ tetrahedron parameters and $\ell,m$, but only $n+1$ of them are independent. The Ptolemy equations are however a set of $n$ independent equations in $n$ edge variables and $\ell,m$. Nonetheless, they are homogeneous, and so $\gamma_1, \ldots, \gamma_n$ can be regarded as varying on $\CP^{n-1}$; alternatively, one can divide through by an appropriate power of one $\gamma_i$ to obtain equations in the $n-1$ variables $\frac{\gamma_1}{\gamma_i}, \ldots, \frac{\gamma_{i-1}}{\gamma_i}, \frac{\gamma_{i+1}}{\gamma_i}, \ldots, \frac{\gamma_n}{\gamma_i}$, which can be eliminated. Effectively, one can simply set one of the variables $\gamma_i$ to $1$.
\end{remark}

A further subtlety arises because our Ptolemy equations are \emph{not} polynomials in $m$ and $\ell$; they are rather polynomials in $m^{1/2}$ and $\ell^{1/2}$. If we set $M = m^{1/2}$ and $L = \ell^{1/2}$ then we obtain \emph{polynomial} Ptolemy equations. Moreover, the variables $L$ and $M$ so defined are essentially those appearing in the $\SL(2,\C)$ A-polynomial: a matrix in $\SL(2,\C)$ with eigenvalues $L,L^{-1}$ yields an element of $\PSL(2,\C)$ corresponding to a hyperbolic isometry with holonomy $L^2 = \ell$. Indeed, the Ptolemy varieties of \cite{Zickert:PtolemyDehnA-poly} are calculated from $\SL(2,\C)$ representations, rather than $\PSL(2,\C)$. We obtain the following.

\begin{corollary}
After setting $M = {\pm}m^{1/2}$ and $L = {\pm}\ell^{1/2}$, eliminating the $\gamma$ variables from the polynomial Ptolemy equations of a one-cusped hyperbolic triangulation yields a polynomial in $M$ and $L$ which contains, as a factor, the factor of the $\SL(2,\C)$ A-polynomial describing hyperbolic structures.
\end{corollary}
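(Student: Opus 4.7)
The plan is to deduce this corollary directly from \refthm{MainPtolemy} by tracking how the change of variables $M = m^{1/2}$, $L = \ell^{1/2}$ interacts with the elimination step. First I would observe that, under this substitution, each Ptolemy equation of $\Delta_j$ becomes a Laurent polynomial in $M$ and $L$ with coefficients polynomial in the $\gamma_j$: the half-integer exponents $\ell^{-\mu_j/2}, m^{\lambda_j/2}$ and their primed analogues become integer powers $L^{-\mu_j}, M^{\lambda_j}$, etc. Multiplying each equation through by an appropriate monomial in $M, L$ then clears negative exponents and produces genuinely polynomial equations in $M, L$ and the $\gamma_j$. This justifies the phrase ``polynomial Ptolemy equations'' in the corollary statement.

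Next I would argue that elimination commutes with the substitution. Setting $\gamma_n = 1$ and eliminating $\gamma_1, \ldots, \gamma_{n-1}$ from the polynomial Ptolemy equations in $M, L, \gamma$ yields a polynomial $Q(M, L)$. Since elimination is realized by iterated resultants, which are compatible with the ring homomorphism $m \mapsto M^2$, $\ell \mapsto L^2$ on the coefficient ring, the polynomial $Q(M, L)$ agrees, up to a monomial factor in $M$ and $L$, with the pullback under $m \mapsto M^2, \ell \mapsto L^2$ of the elimination result in \refthm{MainPtolemy}. By that theorem, the latter contains the factor of the $\PSL(2, \C)$ A-polynomial describing hyperbolic structures, so $Q(M, L)$ contains the pullback of this factor.

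Finally I would identify this pullback with the relevant $\SL(2, \C)$ factor. A hyperbolic structure on $M$ gives a curve of $\PSL(2,\C)$ characters whose meridian and longitude holonomies are $m$ and $\ell$. Lifting to $\SL(2,\C)$ replaces these by eigenvalues $M^{\pm 1}$, $L^{\pm 1}$ with $M^2 = m$, $L^2 = \ell$, and by definition the factor of the $\SL(2, \C)$ A-polynomial describing hyperbolic structures is the defining polynomial of the resulting curve in $(\C^*)^2_{M, L}$. Because the polynomial Ptolemy equations admit solutions corresponding to such $\SL(2,\C)$ lifts, with the $\gamma_j$ playing the role of lifted edge invariants in the style of \cite{Zickert:PtolemyDehnA-poly}, the polynomial $Q(M, L)$ must vanish on this curve, and hence contain the $\SL(2, \C)$ hyperbolic factor. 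The main obstacle I anticipate is making precise the last sentence: one must check that the hyperbolic $\SL(2, \C)$ character curve actually parametrises a family of solutions of the polynomial Ptolemy system, so that $Q$ vanishes there. For the mere containment claim asserted in the corollary, however, it suffices that each hyperbolic $\SL(2,\C)$ character gives rise to some consistent assignment of the $\gamma_j$, which is already implicit in the derivation of the Ptolemy equations via symplectic reduction of the $\SL$-refined gluing equations used in the proof of \refthm{MainPtolemy}.
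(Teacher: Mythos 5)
Your proposal is correct and follows the same route as the paper's proof: project the $\SL(2,\C)$ representation to $\PSL(2,\C)$, obtain a hyperbolic structure on $\TT$ with cusp holonomies $\ell = L^2$ and $m = M^2$, and invoke \refthm{Ptolemy_Apoly}. Your second paragraph on resultant compatibility is a more elaborate way of saying something simpler: since the Ptolemy equations already live in $\C[m^{\pm 1/2}, \ell^{\pm 1/2}, \gamma_1, \ldots, \gamma_n]$, setting $M = m^{1/2}$ and $L = \ell^{1/2}$ is literally a renaming of variables, so the elimination result is the same polynomial under the same renaming and no separate commutation argument is required.

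One thing to fix is the final sentence. You justify the existence of consistent $\gamma_j$ at an $\SL(2,\C)$ point by appealing to ``$\SL$-refined gluing equations used in the proof of \refthm{MainPtolemy}'' and to lifted edge invariants in the style of \cite{Zickert:PtolemyDehnA-poly}. No $\SL(2,\C)$ refinement appears anywhere in that proof: the gluing and cusp equations are the usual $\PSL(2,\C)$ ones, and the half-integer powers of $\ell$ and $m$ in the Ptolemy equations arise purely from the factor of $\frac12$ on the longitude row $\half R^\mfl$ in the symplectic matrix $\SY$ of \eqref{Eqn:SYDefn}, not from any lift of the representation. (The paper explicitly declines to identify its $\gamma_j$ with Zickert's Ptolemy coordinates, stating only a conjecture.) The $\gamma_j$ you actually need at $(\mathcal{L},\mathcal{M})$ are simply those produced from the $\PSL(2,\C)$ hyperbolic structure's tetrahedron parameters via \refprop{ZConvert}, and their existence is exactly what your own second paragraph (equivalently, \refthm{Ptolemy_Apoly} applied at $(\ell,m)=(\mathcal{L}^2,\mathcal{M}^2)$) already delivers; the appeal to a nonexistent $\SL$-refinement is both unnecessary and incorrect.
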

The precise version of this corollary is \refcor{Ptolemy_SL2C_Apoly}.

\subsection{Ptolemy equations in Dehn filling}

Our main application of \refthm{MainPtolemy} is to consider the defining equations of A-polynomials under Dehn filling. 

Consider a two-component link in $S^3$ with component knots $K_0, K_1$. Consider Dehn filling $K_0$ along some slope $p/q$; $K_1$ then becomes a knot in a 3-manifold. 
A Dehn filling can be triangulated using \emph{layered solid tori}, originally defined by Jaco and Rubinstein~\cite{JacoRubinstein:LST}; see also Gu\'{e}ritaud--Schleimer \cite{GueritaudSchleimer}.
Building a layered solid torus yields a sequence of triangulations of a once-punctured torus. The combinatorics of the 3-dimensional layered solid torus corresponds closely to the combinatorics of 2-dimensional triangulations of punctured tori.

Triangulations of punctured tori can be endowed with $\lambda$-lengths via work of Penner \cite{Penner87}. 
When one flips a diagonal in a triangulation, the $\lambda$-lengths are related by a Ptolemy equation. This gives the algebra formed by $\lambda$-lengths the structure of a \emph{cluster algebra} \cite{FockGoncharov06, FominShapiroThurston08, GSV05}. Cluster algebras arise in diverse contexts across mathematics (see e.g. \cite{FWZ:ClusterIntro, Williams:ClusterIntroduction}).

We obtain two sets of Ptolemy equations: one for the cluster algebra of the punctured torus coming from $\lambda$-lengths, and one for the tetrahedra in the layered solid torus coming from \refthm{MainPtolemy}. These are identical except for signs. Thus we can regard the algebra generated by our Ptolemy equations as a ``twisted" cluster algebra, where the word ``twisted" indicates some changes of sign.

\begin{thm}
\label{Thm:Ptolemy_eqns_Dehn_filling}
Suppose $M$ has two cusps $\mfc_0, \mfc_1$, and is triangulated such that only two tetrahedra meet $\mfc_1$, and generating curves $\mfm_0, \mfl_0$ on the cusp triangulation of $\mfc_0$ avoid these tetrahedra. Then for any Dehn filling on the cusp $\mfc_1$ obtained by attaching a layered solid torus, the Ptolemy equations satisfy:
\begin{enumerate}
\item There are a finite number of fixed Ptolemy equations, independent of the Dehn filling, coming from tetrahedra outside the Dehn filling. These are obtained as in \refthm{MainPtolemy} using the reduced Neumann--Zagier matrix and $B$ vector for the unfilled manifold. 
\item The Ptolemy equations for the tetrahedra in the solid torus take the form
\[
\pm \gamma_x \gamma_y \pm \gamma_a^2 - \gamma_b^2 = 0,
\]
where $a,b,x,y$ are slopes on the torus boundary and $x,y$ are crossing diagonals. In addition, the variable $\gamma_y$ will appear for the first time in this equation, with $\gamma_x$, $\gamma_a$, and $\gamma_b$ appearing in earlier equations.
\end{enumerate}
\end{thm}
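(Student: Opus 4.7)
The plan is to apply \refthm{MainPtolemy} to the filled one-cusped manifold $\widetilde M$ formed by attaching a layered solid torus ($\mathrm{LST}$) to $M$ along $\mfc_1$, and to exploit a block structure in the Neumann--Zagier data to split the resulting Ptolemy equations cleanly between the two regions. I would first record three structural facts about $\NZ^\flat(\widetilde M)$: (a) every $\mathrm{LST}$ tetrahedron has all of its ideal vertices in the direction of the filled cusp $\mfc_1$, so it contributes zero to each of $\mu_j, \mu_j', \lambda_j, \lambda_j'$ in the cusp-$\mfc_0$ rows; (b) since $\mfm_0,\mfl_0$ lie on $\mfc_0$ and avoid the two tetrahedra meeting $\mfc_1$, each $\Delta_j\in\TT_0$ inherits the same cusp-$\mfc_0$ entries as in the unfilled $M$; and (c) each new edge introduced by layering is interior to the solid torus and adjacent only to $\mathrm{LST}$ tetrahedra, so its edge row in $\NZ^\flat(\widetilde M)$ is supported in $\mathrm{LST}$ columns.

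For part (ii), I would analyze one layered tetrahedron $\Delta_j$. Two of its faces are glued to the two triangles of a fundamental quadrilateral in the current torus triangulation of $\mfc_1$: the shared edge of these faces becomes the old diagonal, and the opposite edge of $\Delta_j$ becomes the new diagonal. The remaining four edges are the four sides of the quadrilateral, and because opposite sides of the torus quadrilateral are identified by the side pairings, they fall into two pairs of opposite edges of $\Delta_j$ carrying two torus slopes $a,b$. Substituting $\gamma_{j(02)}=\gamma_{j(13)}=\gamma_a$, $\gamma_{j(03)}=\gamma_{j(12)}=\gamma_b$, and writing $\gamma_x,\gamma_y$ for the old and new diagonals in the Ptolemy equation of \refthm{MainPtolemy}, the vanishing of the cusp coefficients from (a) collapses the $m,\ell$ monomials and leaves $\pm\gamma_x\gamma_y\pm\gamma_a^2-\gamma_b^2=0$. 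The new diagonal is a fresh edge class, so $\gamma_y$ cannot have appeared in any earlier equation.

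For part (i), I would use the block-triangular structure from (c): $\mathrm{LST}$-edge rows are supported in $\mathrm{LST}$ columns and cusp-$\mfc_0$ rows are supported in old columns, so the integer equation $\NZ^\flat(\widetilde M)\cdot B = C^\flat(\widetilde M)$ decouples into an old block and an $\mathrm{LST}$ block. Choosing the old entries of $B_{\widetilde M}$ to agree with the $B$-vector determined from the unfilled (two-cusped) Neumann--Zagier data and then extending through the $\mathrm{LST}$ layers inductively, combined with (b), makes the Ptolemy equation of every $\Delta_j \in \TT_0$ depend only on data present before filling, and in particular independent of the filling slope.

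The hard part will be the last step: one must verify that the sign vector $C^\flat(\widetilde M)$ restricts correctly on the old block to data associated to the unfilled manifold -- taking care at edges sitting at $\mfc_1$, which may acquire contributions from $\mathrm{LST}$ tetrahedra but whose extra terms live in $\mathrm{LST}$ columns -- and that the $\mathrm{LST}$ block admits an integer solution forcing no correction to the old entries. I expect a layer-by-layer induction to deliver this: each new layered tetrahedron adds one edge equation and one column pair, and the Ptolemy form from part (ii) provides an explicit local extension of $B$ without touching earlier entries, with the capping tetrahedra at the top of the $\mathrm{LST}$ handled by a separate concrete check due to their self-folding face identifications.
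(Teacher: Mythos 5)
Your overall strategy --- split $\NZ^\flat$ of the filled manifold into an outside block and an LST block and build $B$ layer by layer --- matches the paper's, and your local reading of the layered-tetrahedron combinatorics in part (ii) is correct. But there are two genuine problems.

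First, your point (a) is geometrically wrong, and it is load-bearing. After Dehn filling, the cusp $\mfc_1$ no longer exists; \emph{every} ideal vertex of every LST tetrahedron lies at the one remaining cusp $\mfc_0$, namely at the puncture of the once-punctured torus boundary of the solid torus. So LST tetrahedra do contribute triangles to the cusp triangulation of $\mfc_0$. The actual reason their $\mu,\mu',\lambda,\lambda'$ entries vanish is that they occupy exactly the hexagonal hole in $\mfc_0$'s cusp triangulation left by removing $\Delta_1,\Delta_2$, and the hypothesis that $\mfm_0,\mfl_0$ avoid $\Delta_1,\Delta_2$ means they avoid that hexagon and hence everything later placed into it. Your point (b) covers the tetrahedra outside the LST; for the LST tetrahedra, (a) is the only argument offered and its reasoning does not establish the conclusion. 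Part (ii) relies on exactly this vanishing to drop the $m,\ell$ monomials, so the gap propagates.

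Second, the step you flag as ``the hard part'' --- producing an integer $B(r)$ with $\NZ^\flat(\widetilde M)\cdot B(r)=C^\flat(\widetilde M)$ whose outside entries agree with an unfilled $B$-vector --- is essentially the entire technical content of the theorem, and your plan for it contains a circularity: you propose to use ``the Ptolemy form from part (ii)'' to supply the local extension of $B$, but the signs in that Ptolemy form are \emph{determined by} $B_j,B_j'$, so one cannot read off the form without already knowing $B$. Also the equations do not truly decouple: the three boundary-edge rows (slopes $f,g,h$) have nonzero entries in both the outside and the LST columns, and that coupling is precisely what the induction must track. What is needed is a careful induction on the defect vector $C_k - \NZ_k\cdot B_k$: one shows it has a single nonzero entry (equal to $2$, at the newest interior edge) at each intermediate stage, that the increment to $B$ is $(0,1)$ or $(0,0)$ according to the L/R letters of the Farey word, and that the final fold (summing the $E_{p_N},E_{s_N}$ rows and subtracting $2$ from the combined $C$-entry) cancels the last defect. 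One also needs the starting unfilled $B$ to have zero entries in the $\Delta_1,\Delta_2$ columns, which is not automatic and uses the freedom in \refthm{NeumannSolution}(ii); your plan relies on this implicitly without arranging it. The paper carries out this induction in \reflem{UnfilledSignVector} and Lemmas~\ref{Lem:SignVector0}--\ref{Lem:SignVector_Last}.
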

A precise version of this theorem is \refthm{DehnFillPtolemy}.

Theorem~\ref{Thm:Ptolemy_eqns_Dehn_filling} in particular implies that each of the Ptolemy equations for the solid torus can be viewed as giving a recursive definition of the new variable $\gamma_y$. These equations are explicit,  depending on the slope. Since the outside Ptolemy equations are fixed, in practice this gives a recursive definition of the A-polynomial in terms of the slope of the Dehn filling. If we take a sequence of Dehn filling slopes $\{p_i/q_i\}$, then the A-polynomials of the knots $K_i = K_{p_i / q_i}$, are closely related. The Ptolemy equations defining $A_{K_{i+1}}$ are, roughly speaking, obtained from those for $A_{K_i}$ by adding a single extra Ptolemy relation.

We illustrate this theorem by example for twist knots, which are Dehn fillings of the Whitehead link. While A-polynomials of twist knots are known~\cite{HosteShanahan04,Mathews:A-poly_twist_knots, Mathews:A-poly_twist_knots_err}, we still believe this example is useful in showing the simplicity of the Ptolemy equations. In a follow up paper, we apply these tools to a new family of knots whose A-polynomials were previously unknown, namely twisted torus knots obtained by Dehn filling the Whitehead sister~\cite{HMPT:WhiteheadSister}.

\subsection{Structure of this paper}

In \refsec{symplectic}, we recall work of Thurston~\cite{thurston} and Neumann and Zagier~\cite{NeumannZagier}, including gluing and cusp equations, the Neumann--Zagier matrix, and its symplectic properties. We introduce a symplectic change of basis, and show this leads to Ptolemy equations that give the A-polynomial, proving \refthm{MainPtolemy}.

In \refsec{Dehn}, we connect to Dehn fillings. We review the construction of layered solid tori, and triangulations of Dehn filled manifolds, and show how the triangulation adjusts the Neumann--Zagier matrix. Using this, we find Ptolemy equations for any layered solid torus, completing the proof of Theorem~\ref{Thm:Ptolemy_eqns_Dehn_filling}. 

Section~\ref{Sec:Examples} works through the example of knots obtained by Dehn filling the Whitehead link.

\subsection{Acknowledgments}

This work was supported by the Australian Research Council, grants DP160103085 and DP210103136.

%%%%%%%%%%%%%%%%%%%%%%%%%%%%%%%%%%%%%%%%%%%%%%%%%%%%%%%%%%%%%%%%%
\section{From gluing equations to Ptolemy equations via symplectic reduction}
\label{Sec:symplectic}

In this section we discuss Dimofte's symplectic reduction method and refine it to show how gluing and cusp equations are equivalent to Ptolemy equations, proving \refthm{MainPtolemy}.

\subsection{Triangulations, gluing and cusp equations}
\label{Sec:Triangulations_gluing_cusp}

Let $M$ be a 3-manifold that is the interior of a compact manifold $\overline{M}$ with all boundary components tori.
Let the number of boundary tori be $n_\mfc$, so $M$ has $n_\mfc$ cusps. For example, $M$ may be a link complement $S^3-L$, where $L$ is a link of $n_\mfc$ components, and $\overline{M}$ a link exterior $S^3-N(L)$. 

Suppose $M$ has an ideal triangulation. Throughout this paper, unless stated otherwise, \emph{triangulation} means ideal triangulation, and \emph{tetrahedron} means ideal tetrahedron. Throughout, $n$ denotes the number of tetrahedra in a triangulation. 

\begin{figure}
  \import{figures/}{TetrahedronLabels.pdf_tex}
  \caption{A tetrahedron with vertices labeled $0$, $1$, $2$, $3$ and opposite edges labeled $a$, $b$, $c$.}
  \label{Fig:TetrLabels}
\end{figure}

\begin{defn}
\label{Def:OrLabelling}
An \emph{oriented labelling} of a tetrahedron is a labelling of its four ideal vertices with the numbers $0,1,2,3$ as in \reffig{TetrLabels}, up to oriented homeomorphism preserving edges.

In an ideal tetrahedron with an oriented labelling,  we call the opposite pairs of edges $(01,23), (02,13), (03,12)$ respectively the \emph{$a$-edges}, \emph{$b$-edges} and \emph{$c$-edges}.
\end{defn}
In an oriented labelling, around each vertex (as viewed from outside the tetrahedron), the three incident edges are an $a$-, $b$-, and $c$-edge in anticlockwise order.

The number of edges in the triangulation is equal to the number $n$ of tetrahedra, as follows: letting the numbers of edges and faces in the triangulation temporarily be $E$ and $F$, $\partial \overline{M}$ is triangulated with $2E$ vertices, $3F$ edges and $4n$ triangles. As $\partial \overline{M}$ consists of tori, its Euler characteristic $2E-3F+4n$ is zero. Since $2F=4n$, we have $E=n$.

\begin{defn}
\label{Def:Labelled_tri}
A \emph{labelled triangulation} of $M$ is an oriented ideal triangulation of $M$, where
\begin{enumerate}
\item the tetrahedra are labelled $\Delta_1, \ldots, \Delta_n$ in some order, 
\item the edges are labelled $E_1, \ldots, E_n$ in some order, and
\item each tetrahedron is given an oriented labelling.
\end{enumerate}
\end{defn}

As in the introduction, we will need to refer to the edge $E_k$ to which an edge of tetrahedron $\Delta_j$ is glued.
\begin{defn}
\label{Def:jmunu}
For $j \in \{1, \ldots, n\}$ and distinct $\mu, \nu \in \{0,1,2,3\}$, the index of the edge to which the edge $(\mu \nu)$ of $\Delta_j$ is glued is denoted $j(\mu \nu)$.
In other words, the edge $(\mu \nu)$ of $\Delta_j$ is identified to $E_{j(\mu \nu)}$.
\end{defn}

Suppose now that we have a labelled triangulation of $M$.
To each tetrahedron $\Delta_j$ we associate three variables $z_j, z'_j, z''_j$. These variables are associated with the $a$-, $b$- and $c$-edges of $\Delta_j$ and satisfy the equations
\begin{equation}\label{Eqn:Z''Eqn}
  z_j z_j' z_j'' = -1 \quad\quad \mbox{ and}
\end{equation}
\begin{equation}\label{Eqn:NotProd}
  z_j + (z_j')^{-1} -1 = 0.
\end{equation}
If $\Delta_j$ has a hyperbolic structure then these parameters are standard tetrahedron parameters; see \cite{Thurston:3DGT}. Each of $z_j, z'_j, z''_j$ gives the cross ratio of the four ideal points, in some order. The arguments of $z_j, z'_j, z''_j$ respectively give the dihedral angles of $\Delta_j$ at the $a$-, $b$- and $c$-edges. Note that equations~\eqref{Eqn:Z''Eqn} and~\eqref{Eqn:NotProd} imply that none of $z_j, z'_j, z''_j$ can be equal to $0$ or $1$ (i.e.\ tetrahedra are nondegenerate).

\begin{defn}
\label{Def:abc_edges}
In a labelled triangulation of $M$, we denote by
$a_{k,j}, b_{k,j}, c_{k,j}$ respectively the number of $a$-, $b$-, $c$-edges of $\Delta_j$ identified to $E_k$.
\end{defn}

\begin{lemma}\label{Lem:SumToTwo}
  For each fixed $j$, 
\begin{equation}
\label{Eqn:sum_of_abcs}
\sum_{k=1}^n a_{k,j} = 2, \quad
\sum_{k=1}^n b_{k,j} = 2 \quad \text{and} \quad
\sum_{k=1}^n c_{k,j} = 2.
\end{equation}
\end{lemma}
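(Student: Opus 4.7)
The plan is to prove this by a direct double-counting argument that just unpacks the definitions. Fix a tetrahedron $\Delta_j$ in the labelled triangulation. By \refdef{OrLabelling}, the six edges of $\Delta_j$ split into three pairs of opposite edges: two $a$-edges (namely $(01)$ and $(23)$), two $b$-edges (namely $(02)$ and $(13)$), and two $c$-edges (namely $(03)$ and $(12)$).

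Now, in an ideal triangulation of $M$, each edge of each tetrahedron is identified to exactly one edge $E_k$ of the triangulation; in the notation of \refdef{jmunu}, the $a$-edges $(01)$ and $(23)$ of $\Delta_j$ are identified to $E_{j(01)}$ and $E_{j(23)}$ respectively. By \refdef{abc_edges}, $a_{k,j}$ is the number of $a$-edges of $\Delta_j$ identified to $E_k$. Summing over all $k$ counts each $a$-edge of $\Delta_j$ exactly once, so
\[
\sum_{k=1}^n a_{k,j} = \#\{\text{$a$-edges of } \Delta_j\} = 2.
\]
The identical argument applies to the $b$- and $c$-edges, yielding the other two equalities.

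There is no real obstacle here; the lemma is simply a bookkeeping statement that records the fact that each tetrahedron contributes exactly two $a$-edges, two $b$-edges, and two $c$-edges to the edge identifications of the triangulation. The only thing to be careful about is that the two $a$-edges of $\Delta_j$ may well be identified to the \emph{same} edge $E_k$ of $\TT$, in which case $a_{k,j}=2$ and the other $a_{k',j}$ vanish; but the total is still $2$, so the formula is unaffected.
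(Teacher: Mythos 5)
Your proof is correct and is essentially the paper's own one-line argument (each tetrahedron has two $a$-, two $b$-, two $c$-edges, so summing $a_{k,j}$ over $k$ counts each of the two $a$-edges once), just spelled out in more detail. The caveat you note about both $a$-edges being glued to the same $E_k$ is exactly the remark the paper makes immediately after the lemma, so nothing is missing or different.
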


\begin{proof}
Each tetrahedron $\Delta_j$ has two $a$-edges, two $b$-edges and two $c$-edges, so for fixed $j$ the total sum over all $k$ must be $2$.
\end{proof}
The nonzero terms in the first sum are $a_{j(01),j}$ and $a_{j(23),j}$.
Note that $j(01)$ could equal $j(23)$; this occurs when the two $a$-edges of $\Delta_j$ are glued to the same edge.
In that case, $a_{j(01),j}$ and $a_{j(23),j}$ are the same term, equal to $2$. 
If the two $a$-edges are not glued to the same edge, then $E_{j(01)}$ and $E_{j(23)}$ are distinct, each with one $a$-edge of $\Delta_j$ identified to it, and $a_{j(01),j} = a_{j(23),j} = 1$. Similarly, the nonzero terms in the second sum are $b_{j(02),j}, b_{j(13),j}$ and in the third sum $c_{j(03),j}, c_{j(12),j}$.

The numbers $a_{k,j}, b_{k,j}, c_{k,j}$ can be arranged into a matrix.
\begin{defn}
\label{Def:In}
The \emph{incidence matrix} $\In$ of a labelled triangulation $\TT$ is the $n \times 3n$ matrix whose $k$th row is $(a_{k,1}, b_{k,1}, c_{k,1}, \ldots, a_{k,n}, b_{k,n}, c_{k,n})$.
\end{defn}
Thus $\In$ has rows corresponding to the edges $E_1, \ldots, E_n$, and the columns come in triples with the $j$th triple corresponding to the tetrahedron $\Delta_j$.

The \emph{gluing equation} for edge $E_k$ is then
\begin{equation}\label{Eqn:Gluing}
\prod_{j=1}^n z_j^{a_{k,j}}(z_j')^{b_{k,j}}(z_j'')^{c_{k,j}} =1.
\end{equation}
When the ideal triangulation $\TT$ is hyperbolic, the gluing equations express the fact that tetrahedra fit geometrically together around each edge.

Denote the $n_\mfc$ boundary tori of $\overline{M}$ by $\T_1, \ldots, \T_{n_\mfc}$. A triangulation of $M$ by tetrahedra induces a triangulation of each $\T_k$ by triangles. On each $\T_k$ we choose a pair of oriented curves $\mfm_k, \mfl_k$ forming a basis for $H_1 (\T_k)$. By an isotopy if necessary, we may assume each curve is in general position with respect to the triangulation of $\T_k$, without backtracking. Then each curve splits into segments, where each segment lies in a single triangle and runs from one edge to a distinct edge. Each segment of $\mfm_k$ or $\mfl_k$ can thus be regarded as running clockwise or anticlockwise around a unique corner of a triangle; these directions are as viewed from outside the manifold. 
We count anticlockwise motion around a vertex as positive, and clockwise motion as negative.
Each vertex (resp.\ face) of the triangulation of $\T_k$ corresponds to some edge (resp.\ tetrahedron) of the triangulation $\TT$ of $M$; thus each corner of a triangle corresponds to a specific edge of a specific tetrahedron. 
\begin{defn}
\label{Def:abc-incidence}
The \emph{$a$-incidence number} (resp.\ $b$-, $c$-incidence number) of $\mfm_k$ (resp.\ $\mfl_k$) with the tetrahedron $\Delta_j$ is the number of segments of $\mfm_k$ (resp.\ $\mfl_k$) running anticlockwise (i.e.\ positively) through a corner of a triangle corresponding to an $a$-edge (resp.\ $b$-, $c$-edge) of $\Delta_j$, minus the number of segments of $\mfm_k$ (resp.\ $\mfl_k$) running clockwise (i.e.\ negatively) through a corner of a triangle corresponding to an $a$-edge (resp.\ $b$-edge, $c$-edge) of $\Delta_j$.
\begin{enumerate}
\item Denote by $a^\mfm_{k,j}, b^\mfm_{k,j}, c^\mfm_{k,j}$ the $a$-, $b$-, $c$-incidence numbers of $\mfm_k$ with $\Delta_j$.
\item Denote by $a^\mfl_{k,j}, b^\mfl_{k,j}, c^\mfl_{k,j}$ the $a$-, $b$-, $c$-incidence numbers of $\mfl_k$ with $\Delta_j$.
\end{enumerate}
\end{defn}

To each cusp torus $\T_k$ we associate variables $m_k, \ell_k$. The \emph{cusp equations} at $\T_k$ are 
\begin{equation}\label{Eqn:CuspEqn}
m_k = \prod_{j=1}^n z_j^{a^\mfm_{k,j}} \left( z_j' \right)^{b^\mfm_{k,j}} \left( z_j'' \right)^{c^\mfm_{k,j}}, \quad\quad
\ell_k = \prod_{j=1}^n z_j^{a^\mfl_{k,j}} \left( z_j' \right)^{b^\mfl_{k,j}} \left(z_j'' \right)^{c^\mfl_{k,j}}
\end{equation}
When $\TT$ is a \emph{hyperbolic triangulation}, meaning the ideal tetrahedra are all positively oriented and glue to give a smooth, complete hyperbolic structure on the underlying manifold, the cusp equations give $m_k$ and $\ell_k$, the holonomies of the cusp curves $\mfm_k$ and $\mfl_k$, in terms of tetrahedron parameters.

Any hyperbolic triangulation $\TT$ gives tetrahedron parameters $z_j, z'_j, z''_j$ and cusp holonomies $m_k, \ell_k$ satisfying the relationships \eqref{Eqn:Z''Eqn}--\eqref{Eqn:NotProd} between the $z$-variables, the gluing equations \eqref{Eqn:Gluing} and cusp equations \eqref{Eqn:CuspEqn}; moreover, the tetrahedron parameters all have positive imaginary part.
However, in general there may be solutions of these equations which do not correspond to a hyperbolic triangulation, for instance those with $z_j$ with negative imaginary part (which may still give $M$ a hyperbolic structure), or with branching around an edge (which will not). Additionally, not every hyperbolic structure on $M$ may give a solution to the gluing and cusp equations, since the triangulation $\TT$ may not be geometrically realisable.

\subsection{The A-polynomial from gluing and cusp equations}

Suppose now that $n_\mfc=1$, i.e.\ $M$ has one cusp, and moreover, that $M$ is the complement of a knot $K$ in a homology 3-sphere. 

In this case, there is no need for the $k=1$ subscript in notation for the lone cusp, and we may simply write
\begin{gather*}
\mfm = \mfm_1, \quad \mfl = \mfl_1, \quad
m = m_1, \quad \ell = \ell_1, \\
a^\mfm_j = a^\mfm_{1,j}, \quad b^\mfm_j = b^\mfm_{1,j}, \quad c^\mfm_j = c^\mfm_{1,j}, \quad
a^\mfl_j = a^\mfl_{1,j}, \quad b^\mfl_j = b^\mfl_{1,j}, \quad c^\mfl_j = c^\mfl_{1,j}, \quad
\end{gather*}

In this case we can take the boundary curves $(\mfm, \mfl)$ to be a topological longitude and meridian respectively. That is, we may take $\mfl$ to be primitive and nullhomologous in $M$, and $\mfm$ to bound a disc in a neighbourhood of $K$. 

We orient $\mfm$ and $\mfl$ so that the tangent vectors $v_\mfm$ and $v_\mfl$ to $\mfm$ and $\mfl$, respectively, at the point where $\mfm$ intersects $\mfl$ are oriented according to the right hand rule: $v_\mfm\times v_\mfl$ points in the direction of the outward normal.

The equations \eqref{Eqn:Z''Eqn}--\eqref{Eqn:NotProd} relating $z,z',z''$ variables, the gluing equations \eqref{Eqn:Gluing}, and the cusp equations \eqref{Eqn:CuspEqn} are equations in the variables $z_j, z'_j, z''_j$ and $\ell,m$. Solve these equations for $\ell,m$, eliminating the variables $z_j,z'_j,z''_j$ to obtain a relation between $\ell$ and $m$.

Champanerkar~\cite{Champanerkar:Thesis} showed that the above equations can be solved in this sense to give divisors of the $\PSL(2,\C)$ A-polynomial of $M$. Segerman showed that, if one takes a certain extended version of this variety, there exists a triangulation such that all factors of the $\PSL(2,\C)$ A-polynomial are obtained \cite{Segerman:Deformation}. See also \cite{GoernerZickert} for an effective algorithm. 

\begin{thm}[Champanerkar]
\label{Thm:Champ}
When we solve the system of equations \eqref{Eqn:Z''Eqn}--\eqref{Eqn:NotProd}, \eqref{Eqn:Gluing} and \eqref{Eqn:CuspEqn} in terms of $m$ and $\ell$, we obtain a factor of the $\PSL(2,\CC)$ A-polynomial.
\end{thm}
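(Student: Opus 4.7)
The plan is to construct, from any solution of the gluing and cusp equations, a $\PSL(2,\C)$ representation of $\pi_1(M)$ whose cusp eigenvalues are determined by $m$ and $\ell$, and then to argue that the image of the solution variety under $(m,\ell)$ is contained in the zero locus of a factor of the $\PSL(2,\C)$ A-polynomial. Since the A-polynomial is defined by projecting the (geometric component of the) character variety of $\pi_1(M)$ to $\C^* \times \C^*$ via the eigenvalues of the meridian and longitude, the strategy is to identify our solution variety with a piece of that character variety.

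First I would take an arbitrary solution $(z_j, z_j', z_j'')$ of \eqref{Eqn:Z''Eqn}--\eqref{Eqn:NotProd} and \eqref{Eqn:Gluing}, and use it as shape data to define a developing map on the universal cover $\widetilde{M}$. Concretely, pick a base tetrahedron, lift it to $\HH^3$ by sending its vertices to prescribed points of $\partial\HH^3$ realising its shape parameter, and then inductively develop across faces using the shape parameters of adjacent tetrahedra. The gluing equations \eqref{Eqn:Gluing} are exactly the obstruction to consistency of this development around each edge; since they are satisfied, the developing map descends to a well-defined map $\widetilde{M} \to \HH^3$ equivariant under a homomorphism $\rho \from \pi_1(M) \to \PSL(2,\C)$, unique up to conjugation. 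This is essentially Thurston's construction from \cite{thurston}, and the fact that the $z_j$ need not lie in the upper half plane is unimportant: one obtains an algebraic representation regardless.

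Next I would compute the eigenvalues of $\rho(\mfm)$ and $\rho(\mfl)$ in terms of tetrahedron parameters. Looking at the induced developing map on the cusp torus $\T_1$, the holonomies $\rho(\mfm)$ and $\rho(\mfl)$ act on $\partial \HH^3$ as parabolic or loxodromic elements fixing the ideal vertex of the lift of the cusp; their derivatives at that fixed point are products of shape parameters, one for each corner the curve passes through, with exponent recording whether the curve turns anticlockwise or clockwise. This is precisely the definition of the incidence numbers $a_{j}^\mfm, b_j^\mfm, c_j^\mfm$ and $a_j^\mfl, b_j^\mfl, c_j^\mfl$ in \refdef{abc-incidence}, so the derivatives at the fixed point are exactly the right-hand sides of the cusp equations \eqref{Eqn:CuspEqn}. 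Hence $m$ and $\ell$ are squares of eigenvalues of $\SL(2,\C)$ lifts of $\rho(\mfm), \rho(\mfl)$, and thus give well-defined eigenvalues of $\rho(\mfm), \rho(\mfl) \in \PSL(2,\C)$.

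Finally I would conclude by an elimination/algebraic-geometry argument. The locus $V$ cut out by \eqref{Eqn:Z''Eqn}--\eqref{Eqn:NotProd}, \eqref{Eqn:Gluing} and \eqref{Eqn:CuspEqn} in the variables $(z_j, z_j', z_j'', m, \ell)$ maps into the $\PSL(2,\C)$ character variety via $\rho$, and then to $\C^* \times \C^*$ via the cusp eigenvalues, with image lying inside the A-polynomial curve $\{A_M(m,\ell)=0\}$. Taking the Zariski closure of the projection of $V$ to the $(m,\ell)$ plane produces an algebraic subset of $\{A_M=0\}$, so the polynomial one obtains by eliminating the $z$-variables must divide (a power of) $A_M$, i.e.\ is a factor. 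The main obstacle I anticipate is the elimination step: one must verify that the $(m,\ell)$-projection of $V$ is in fact one-dimensional (so the eliminant is a nontrivial polynomial), not zero-dimensional or empty. This is where Thurston's hyperbolic Dehn surgery theorem enters, guaranteeing a one-complex-parameter family of deformations near the complete hyperbolic structure, which forces $V$ to have a component of the correct dimension dominating a component of the A-polynomial curve. Handling possibly spurious components of $V$ coming from non-geometric or degenerate solutions only strengthens the statement (it can only add extra factors), so the desired divisibility follows.
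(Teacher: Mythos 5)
The paper does not supply a proof of this result: it is stated with attribution to Champanerkar's thesis \cite{Champanerkar:Thesis} and then used as an ingredient in the proof of \refthm{Ptolemy_Apoly}. So there is no ``paper's own proof'' to compare against; what you have written is a reconstruction of the argument one expects to find in Champanerkar's thesis (and in the surrounding literature, e.g.\ Thurston, Neumann--Zagier, Yoshida). Your first two steps --- constructing a pseudo-developing map from any nondegenerate shape solution to obtain a $\PSL(2,\C)$ representation, and identifying $m$ and $\ell$ with the derivatives of the cusp holonomies at their common fixed point (hence with squares of $\SL(2,\C)$ eigenvalues) --- are exactly the right ingredients, and the dimension count via \cite[Prop.~2.3]{NeumannZagier} and Thurston's deformation theory is the standard way to see the projection is a curve rather than a point.

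The one place your argument goes astray is the final sentence. You say that spurious non-geometric or degenerate components of $V$ ``only strengthen the statement (it can only add extra factors).'' That is backwards: if extra components of $V$ projected to curves \emph{outside} the zero locus of the A-polynomial, the eliminant would acquire extra factors and would then fail to divide $A_M$, which would falsify the theorem. The correct point is that there are no such spurious components to worry about: equations \eqref{Eqn:Z''Eqn}--\eqref{Eqn:NotProd} already force $z_j \neq 0, 1, \infty$, so \emph{every} point of $V$ gives nondegenerate tetrahedra, hence a well-defined developing map and a representation, hence an $(m,\ell)$-pair on the A-polynomial curve. Containment of the projection in $\{A_M=0\}$ is therefore automatic, and the only thing one can lose are components of the A-polynomial curve that the chosen triangulation fails to see --- which is precisely why the conclusion is ``a factor'' rather than the whole polynomial (cf.\ the surrounding discussion of Segerman's result \cite{Segerman:Deformation}). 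A secondary, more cosmetic gap is that containment of zero loci only gives divisibility after passing to radicals; one should either work with reduced defining polynomials or phrase the conclusion as ``divides a power of $A_M$,'' which is the convention Champanerkar uses.
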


\subsection{Logarithmic equations and Neumann-Zagier matrix}

We now return to the general case where the number $n_\mfc$ of cusps of $M$ is arbitrary. 

Note that equation \eqref{Eqn:Z''Eqn} relating $z_j,z'_j,z''_j$, the gluing equations \eqref{Eqn:Gluing}, and the cusp equations \eqref{Eqn:CuspEqn} are multiplicative. By taking logarithms now we make them additive.

Equation \eqref{Eqn:Z''Eqn} implies that each $z_j, z'_j$ and $z''_j$ is nonzero. Taking (an appropriate branch of)
a logarithm we obtain
\[
\log z_j + \log z'_j + \log z''_j = i\pi
\]
Define $Z_j = \log z_j$ and  $Z_j' = \log z_j'$, using the branch of the logarithm with argument in $(-\pi, \pi]$, and then define $Z''_j$ as
\begin{equation}
\label{Eqn:LogZ''Eqn}
Z_j'' = i\pi - Z_j - Z_j',
\end{equation}
so that indeed $Z''_j$ is a logarithm of $z''_j$.

In a hyperbolic triangulation, each tetrahedron parameter has positive imaginary part. The arguments of $z_j, z'_j, z''_j$ (i.e.\ the imaginary parts of $Z_j, Z'_j, Z''_j$) are the dihedral angles at the $a$-, $b$- and $c$-edges of $\Delta_j$ respectively. They are the angles of a Euclidean triangle, hence they all lie in $(0,\pi)$ and they sum to $\pi$.

The gluing equation \eqref{Eqn:Gluing} expresses the fact that tetrahedra fit together around an edge. Taking a logarithm, we may make the somewhat finer statement that dihedral angles around the edge sum to $2\pi$. Thus we take the logarithmic form of the gluing equations as
\begin{equation}
\label{Eqn:LogGluing}
\sum_{j=1}^n a_{k,j} Z_j + b_{k,j} Z_j' + c_{k,j} Z_j'' = 2\pi i.
\end{equation}
We similarly obtain logarithmic forms of the cusp equations \eqref{Eqn:CuspEqn} as
\begin{equation}
\label{Eqn:LogCuspEqn}
\log m_k = \sum_{j=1}^n a^\mfm_{k,j} Z_j + b^\mfm_{k,j} Z_j' + c^\mfm_{k,j} Z_j'', \quad
\log \ell_k = \sum_{j=1}^n a^\mfl_{k,j} Z_j+ b^\mfl_{k,j} Z_j' + c^\mfl_{k,j} Z_j''.
\end{equation}
We can then observe that any solution of \eqref{Eqn:LogZ''Eqn} and the logarithmic gluing and cusp equations \eqref{Eqn:LogGluing}--\eqref{Eqn:LogCuspEqn} yields, after exponentiation, a solution of \eqref{Eqn:Z''Eqn} and the original gluing \eqref{Eqn:Gluing} and cusp equations \eqref{Eqn:CuspEqn}. Moreover, any solution of \eqref{Eqn:Z''Eqn}, \eqref{Eqn:Gluing} and \eqref{Eqn:CuspEqn} has a logarithm which is a solution of \eqref{Eqn:LogZ''Eqn} and \eqref{Eqn:LogGluing}--\eqref{Eqn:LogCuspEqn}.

Using equation \eqref{Eqn:LogZ''Eqn} we eliminate the variables $Z''_j$ (just as using equation \eqref{Eqn:Z''Eqn} we can eliminate the variables $z''_j$). In doing so, coefficients are combined in a way that persists throughout this paper, and so we define these combinations as follows.
\begin{defn}
\label{Def:NZConsts}
For a given labelled triangulation of $M$, we define
\begin{gather*}
d_{k,j} = a_{k,j} - c_{k,j}, \quad
d'_{k,j} = b_{k,j} - c_{k,j}, \quad
c_k = \sum_{j=1}^n c_{k,j} \quad \text{for $k=1, 2, \ldots, n$,} \\
\mu_{k,j} = a^\mfm_{k,j} - c^\mfm_{k,j}, \quad
\mu'_{k,j} = b^\mfm_{k,j} - c^\mfm_{k,j}, \quad
c^\mfm_k = \sum_{j=1}^n c^\mfm_{k,j} \quad \text{for $k = 1, 2, \ldots, n_\mfc$,} \\
\lambda_{k,j} = a^\mfl_{k,j} - c^\mfl_{k,j}, \quad
\lambda'_{k,j} = b^\mfl_{k,j} - c^\mfl_{k,j}, \quad
c^\mfl_k = \sum_{j=1}^n c^\mfl_{k,j} \quad \text{for $k = 1, 2, \ldots, n_\mfc$.}
\end{gather*}
Note that the index $k$ in the first line steps through the $n$ edges, while the index $k$ in the next two lines steps through the $n_\mfc$ cusps.
\end{defn}

When $n_\mfc = 1$ we can drop the $k$ subscript on cusp terms, so we have
\[
\mu_j = a^\mfm_j - c^\mfm_j, \quad
\mu'_j = b^\mfm_j - c^\mfm_j, \quad
c^\mfm = \sum_{j=1}^n c^\mfm_j, \quad
\lambda_j = a^\mfl_j - c^\mfl_j, \quad
\lambda'_j = b^\mfl_j - c^\mfl_j, \quad
c^\mfl = \sum_{j=1}^n c^\mfl_j.
\]

We thus rewrite the the logarithmic gluing and cusp equations \eqref{Eqn:LogGluing}--\eqref{Eqn:LogCuspEqn} in terms of the variables $Z_j, Z'_j$ and $\ell_k, m_k$ only, as
\begin{align}
  \sum_{j=1}^n d_{k,j} Z_j + d'_{k,j} Z_j' &= i \pi \left( 2 - c_k \right) \label{Eqn:NZGluing} \\
\sum_{j=1}^n \mu_{k,j} Z_j + \mu'_{k,j} Z_j'  &= \log m_k - i\pi c^\mfm_{k} \label{Eqn:NZMEqn} \\
\sum_{j=1}^n \lambda_{k,j} Z_j + \lambda'_{k,j} Z_j' &= \log\ell_k - i\pi c^\mfl_{k}. \label{Eqn:NZEllEqn}
\end{align}

Define the row vectors of coefficients in equations \eqref{Eqn:NZGluing}--\eqref{Eqn:NZEllEqn} as follows:
\[
\begin{array}{rcccccccc}
R^G_k &:=& ( & d_{k,1} & d_{k,1}' & \ldots & d_{k,n} & d_{k,n}' & ) \\
R^\mfm_k &:=& ( & \mu_{k,1} & \mu'_{k,1} & \cdots & \mu_{k,n} & \mu'_{k,n} & ) \\
R^\mfl_k &:= &( & \lambda_{k,1} & \lambda'_{k,1} & \cdots & \lambda_{k,n} & \lambda'_{k,n} & ).
\end{array}
\]
So $R^G_k$ gives the coefficients in the logarithmic gluing equation for the $k$th edge $E_k$, and $R^\mfm_k, R^\mfl_k$ give respectively coefficients in the logarithmic cusp equations for $\mfm_k$ and $\mfl_k$ on the $k$th cusp. 

When $n_\mfc = 1$ we again drop the $k$ subscript on cusp terms and simply write $R^\mfm = R^\mfm_k$ and $R^\mfl = R^\mfl_k$, so that $R^\mfm = (\mu_1, \mu'_1, \ldots, \mu_n, \mu'_n)$ and $R^\mfl = (\lambda_1, \lambda'_1, \ldots, \lambda_n, \lambda'_n)$.

By re-exponentiating we observe natural meanings for the new $d,d',\mu,\mu',\lambda,\lambda',c$ coefficients of \refdef{NZConsts}. The tetrahedron parameters and the holonomies $m_k, \ell_k$ satisfy versions of the gluing and cusp equations without any $z''_j$ appearing, where the $d,d'$ variables appear as exponents in gluing equations, $\mu,\mu',\lambda,\lambda'$ variables appear as exponents in cusp equations, and the $c$ variables determine signs:
\begin{gather*}
\prod_{j=1}^n z_j^{d_{k,j}} \left( z'_j \right)^{d'_{k,j}} = 
\left( -1 \right)^{c_k} \quad \text{for $k=1, \ldots, n$ (indexing edges)} \\
m_k = \left( -1 \right)^{c^\mfm_{k}} \prod_{j=1}^n z_j^{\mu_{k,j}} \left( z'_j \right)^{\mu'_{k,j}}, \quad
\ell_k = \left( -1 \right)^{c^\mfl_{k}} \prod_{j=1}^n z_j^{\lambda_{k,j}} \left( z'_j \right)^{\lambda'_{k,j}} \quad \text{for $k=1, \ldots, n_\mfc$ (cusps)}.
\end{gather*}
When $n_\mfc = 1$, the notation for cusp equations again simplifies so we have \[
m = \left( -1 \right)^{c^\mfm} \prod_{j=1}^n z_j^{\mu_j} \left( z'_j \right)^{\mu'_j}, \quad \text{ and }\quad \ell = \left( -1 \right)^{c^\mfl} \prod_{j=1}^n z_j^{\lambda_j} \left( z'_j \right)^{\lambda'_j}.
\]

The matrix with rows $R_1^G, \dots, R_n^G, R^\mfm_1, R^\mfl_1, \ldots, R^\mfm_{n_\mfc}, R^\mfl_{n_\mfc}$ is called the \emph{Neumann--Zagier matrix}, and we denote it by $\NZ$. The first $n$ rows correspond to the edges $E_1, \ldots, E_n$, and the next rows come in pairs corresponding to the pairs $(\mfm_k, \mfl_k)$ of basis curves for the cusp tori $\T_1, \ldots, \T_{n_\mfc}$. The columns come in pairs corresponding to the tetrahedra $\Delta_1, \ldots, \Delta_n$. Note that the data of a labelled triangulation of  \refdef{Labelled_tri} give us the information to write down the matrix: the edge ordering $E_1, \ldots, E_n$ orders the rows; the tetrahedron ordering $\Delta_1, \ldots, \Delta_n$ orders pairs of columns; and the oriented labelling on each tetrahedron determines each pair of columns.
\begin{equation}\label{Eqn:NZMatrix}
\NZ = \begin{bmatrix} R_1^G \\ \vdots \\ R_n^G \\ R^\mfm_1 \\ R^\mfl_1 \\ \vdots \\ R^\mfm_{n_\mfc} \\ R^\mfl_{n_\mfc} \end{bmatrix}
= 
\kbordermatrix{
 & \Delta_1 & \cdots & \Delta_n \\
E_1 & d_{1,1} \quad d_{1,1}' & \cdots & d_{1,n} \quad d_{1,n}' \\
\vdots & \vdots & \ddots & \vdots \\
E_n & d_{n,1} \quad d_{n,1}' & \cdots & d_{n,n} \quad d_{n,n}' \\
\mfm_1 & \mu_{1,1} \quad \mu_{1,1}' & \cdots & \mu_{1,n} \quad \mu'_{1,n} \\
\mfl_1 & \lambda_{1,1} \quad \lambda'_{1,1} & \cdots & \lambda_{1,n} \quad \lambda'_{1,n} \\
\vdots & \vdots & \ddots & \vdots \\
\mfm_{n_\mfc} & \mu_{n_\mfc,1} \quad \mu'_{n_\mfc,1} & \cdots & \mu_{n_\mfc,n} \quad \mu_{n_\mfc,n} \\
\mfl_{n_\mfc} & \lambda_{n_\mfc,1} \quad \lambda'_{n_\mfc,1} & \cdots & \lambda_{n_\mfc,n} \quad \lambda'_{n_\mfc,n}
}
\end{equation}

The gluing and cusp equations can then be written as a single matrix equation, if we make the following definitions.
\begin{defn} \
\label{Def:ZzHC}
The \emph{$Z$-vector}, \emph{$z$-vector}, \emph{$H$-vector} and \emph{$C$-vector} are defined as
\begin{align*}
Z &:= \left( Z_1, Z'_1, \ldots, Z_n, Z'_n \right)^T, \\
z &:= \left( z_1, z'_1, \ldots, z_n, z'_n \right)^T, \\
H &:= \left( 0, \ldots, 0, \log m_1, \log \ell_1, \ldots, \log m_{n_\mfc}, \log \ell_{n_\mfc} \right)^T, \\
C &:= \left( 2-c_1, \ldots, 2-c_n, -c^\mfm_{1}, -c^\mfl_1, \ldots, -c^\mfm_{n_\mfc}, -c^\mfl_{n_\mfc} \right)^T.
\end{align*}
\end{defn}

The vector $Z$ contains the logarithmic tetrahedral parameters; the vector $H$ contains the cusp holonomies, and the vector $C$ is a vector of constants derived from the gluing data, giving sign terms in exponentiated equations.

We summarise our manipulations of the various equations in the following statement.
\begin{lem}
\label{Lem:LogEqns_NZ}
Let $\TT$ be a labelled triangulation of $M$.
\begin{enumerate}
\item
The logarithmic gluing and cusp equations can be written compactly as
\begin{equation}
\label{Eqn:NZGluingCuspEqn}
\NZ  \cdot Z = H + i \pi C.
\end{equation}
That is, logarithmic gluing and cusp equations \eqref{Eqn:NZGluing}--\eqref{Eqn:NZEllEqn}
are equivalent to \eqref{Eqn:NZGluingCuspEqn}.
\item
After exponentiation, a solution $Z$ of \eqref{Eqn:NZGluingCuspEqn} gives $z$ which, together with $z''_j$ defined by \eqref{Eqn:Z''Eqn}, yields a solution of the gluing equations \eqref{Eqn:Gluing} and cusp equations \eqref{Eqn:CuspEqn}. 
\item
Conversely, any solution $(z_j, z'_j, z''_j)$ of \eqref{Eqn:Z''Eqn}, gluing equations \eqref{Eqn:Gluing} and cusp equations \eqref{Eqn:CuspEqn} yields $z$ with logarithm $Z$ satisfying \eqref{Eqn:NZGluingCuspEqn}.
\item
Any hyperbolic triangulation yields $Z$ and $H$ which satisfy \eqref{Eqn:NZGluingCuspEqn}. \qed
\end{enumerate}
\end{lem}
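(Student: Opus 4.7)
The plan is to treat this as a bookkeeping lemma: all four assertions follow by unpacking definitions and tracing through the exponentiation/logarithm correspondence already developed in the section. No new geometric input is needed beyond what appears in the preceding discussion.

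For part (i), I would simply write out $\NZ \cdot Z = H + i\pi C$ row by row. By the definitions of $\NZ$, $Z$, $H$, and $C$ in \refdef{ZzHC} and \eqref{Eqn:NZMatrix}, the $k$th gluing row gives $\sum_j d_{k,j} Z_j + d'_{k,j} Z'_j = i\pi(2-c_k)$, which is \eqref{Eqn:NZGluing}; the rows corresponding to $\mfm_k$ and $\mfl_k$ give \eqref{Eqn:NZMEqn} and \eqref{Eqn:NZEllEqn}. Conversely, stacking these equations reproduces the matrix equation. This is purely symbolic matching.

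For part (ii), I would exponentiate each row of $\NZ \cdot Z = H + i\pi C$. For a gluing row this gives $\prod_j z_j^{d_{k,j}} (z'_j)^{d'_{k,j}} = e^{i\pi(2-c_k)} = (-1)^{c_k}$. The key algebraic identity is that \eqref{Eqn:Z''Eqn} gives $(z''_j)^{c_{k,j}} = (-1)^{c_{k,j}} z_j^{-c_{k,j}} (z'_j)^{-c_{k,j}}$, so using $d_{k,j} = a_{k,j} - c_{k,j}$, $d'_{k,j} = b_{k,j} - c_{k,j}$, and $c_k = \sum_j c_{k,j}$, the exponentiated reduced form rewrites exactly as $\prod_j z_j^{a_{k,j}}(z'_j)^{b_{k,j}}(z''_j)^{c_{k,j}} = 1$, recovering \eqref{Eqn:Gluing}. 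The cusp rows are handled identically, yielding \eqref{Eqn:CuspEqn}. Part (iii) is the reverse calculation: given a solution of \eqref{Eqn:Z''Eqn}, \eqref{Eqn:Gluing}, \eqref{Eqn:CuspEqn}, pick logarithms $Z_j = \log z_j$, $Z'_j = \log z'_j$ and set $Z''_j := i\pi - Z_j - Z'_j$, which is automatically a logarithm of $z''_j$ by \eqref{Eqn:Z''Eqn}. The only subtle step is that exponentiating gives equations holding only modulo $2\pi i$; to obtain exact equality in \eqref{Eqn:NZGluingCuspEqn} one shifts individual $Z_j, Z'_j$ by integer multiples of $2\pi i$ as needed. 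This branch-bookkeeping is the only nontrivial point in the proof.

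For part (iv), I would appeal to the geometric interpretation already laid out in the section: in a hyperbolic triangulation, each $Z_j, Z'_j, Z''_j$ has imaginary part equal to the dihedral angle at the corresponding edge of $\Delta_j$, these three angles lie in $(0,\pi)$ and sum to $\pi$ (so the principal branch choices make \eqref{Eqn:LogZ''Eqn} hold on the nose). Completeness around each edge $E_k$ forces the dihedral angles there to sum to exactly $2\pi$, which is \eqref{Eqn:LogGluing}, and the cusp holonomies of $\mfm_k, \mfl_k$ are by definition the products appearing in \eqref{Eqn:LogCuspEqn}. Combining with part (i), the resulting $Z$ and $H$ satisfy \eqref{Eqn:NZGluingCuspEqn}.

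The main (and really only) obstacle is the logarithm branch issue in part (iii): one must verify that appropriate branches exist so that the gluing/cusp equations hold exactly, not merely modulo $2\pi i$. Since the RHS of each row of \eqref{Eqn:NZGluingCuspEqn} is fixed modulo $2\pi i$ by the exponentiated equation, and since shifting a single $Z_j$ by $2\pi i$ changes each row's LHS by an integer multiple of $2\pi i$, the required adjustment is always achievable, so this presents no genuine difficulty. Everything else is direct substitution.
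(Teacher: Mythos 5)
The paper treats this lemma as a pure summary of the preceding discussion --- there is no separate proof environment, only an inline \verb|\qed| --- so the ``paper's proof'' is effectively the observations in the paragraphs before the lemma. Your proposal unpacks the same observations: parts (i), (ii), and (iv) match exactly what the paper intends, and your identification of the branch-of-logarithm issue in part (iii) is the right thing to worry about.

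However, your resolution of that issue does not actually close the gap. You write that ``shifting a single $Z_j$ by $2\pi i$ changes each row's LHS by an integer multiple of $2\pi i$, so the required adjustment is always achievable.'' The first clause is true but the conclusion is a non sequitur. Replacing $Z \mapsto Z + 2\pi i N$ for an integer vector $N$ changes the vector of gluing-row values by $2\pi i\,(\NZ^{I,II} \cdot N)$, so the set of achievable corrections is the integer image of the gluing block of $\NZ$, which by \refthm{NeumannZagier}(iv) has rank only $n - n_\mfc < n$. It is therefore not automatic that an arbitrary integer error vector $\epsilon \in \Z^n$ (coming from the exponentiated equations holding only mod $2\pi i$) lies in this image. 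The cusp rows are genuinely unproblematic, since $\log m_k$ and $\log \ell_k$ in the $H$-vector can absorb any $2\pi i$ ambiguity, but for the gluing rows your argument only shows the error is correctable \emph{if} it lies in the image of the gluing block. To be fair, the paper glosses over exactly the same point (``any solution \ldots has a logarithm which is a solution of \ldots'' is asserted without proof), so you are in good company; but since you flag it as ``the only nontrivial point,'' you should not then dismiss it with an argument that does not in fact establish it. What would be needed is either (a) a direct argument that the error vector always lies in the required image, or (b) a reinterpretation of the lemma --- e.g., restricting to solutions on the connected component of the geometric solution, where analytic continuation of the branches from the hyperbolic point forces the constants to remain $2\pi i$ --- which is the reading consistent with how the lemma is actually used in the proof of \refthm{Ptolemy_Apoly}.
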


\subsection{Symplectic and topological properties of the Neumann-Zagier matrix}
\label{Sec:symplectic_top_properties_NZ}

The matrix $\NZ$ has nice symplectic properties, due to 
Neumann--Zagier \cite{NeumannZagier}, which we now recall.

First, we introduce notation for the standard symplectic structure on $\R^{2N}$, for any positive integer $N$.
Denote by $\e_i$ (resp.\ $\f_i$) the vector whose only nonzero entry is a $1$ in the $(2i-1)$th coordinate (resp.\ $2i$th coordinate). Dually, let $x_i$ (resp.\ $y_i$) denote the coordinate function which returns the $(2i-1)$th coordinate (resp.\ $2i$th coordinate). We define the standard symplectic form $\omega$ as 
\begin{equation}\label{Eqn:SymplecticForm}
\omega = dx_1\wedge dy_1 + \cdots+ dx_N\wedge dy_N = \sum_{j=1}^N dx_j \wedge dy_j.
\end{equation}
Thus, given two vectors $V=(V_1, V_1', \dots, V_N, V_N')$ and $W=(W_1, W_1', \dots, W_N, W_N')$ in $\RR^{2N}$,
\[
\omega(V,W) = \sum_{j=1}^N V_j W_j'-V_j' W_j.
\]
Alternatively, $\omega(V,W) = V^T J W = (JV) \cdot W$, where $\cdot$ is the standard dot product, and $J$ is multiplication by $i$ on $\CC^N \cong \RR^{2N}$, i.e.\ $J(\e_i) = \f_i$ and $J(\f_i) = -\e_i$ (hence $J^2=-1$). As a matrix,
\[
J = \begin{bmatrix}
0 & -1 &    &    &				&		&   \\
1 & 0  &    &    & 				&		&\\
  &    &  0 & -1 &  			&		&\\
	&    &  1 & 0  & 				&		&\\
	&    &    &    & \ddots	&		& \\
	&    &    &    &        & 0 & -1 \\
	&    &    &    &        & 1 & 0
	\end{bmatrix}
\]

The ordered basis $( \e_1, \f_1, \ldots, \e_N, \f_N )$ forms a \emph{standard symplectic basis}, satisfying
\[
\omega(\e_i, \f_j) = \delta_{i,j}, \quad
\omega(\e_i, \e_j) = 0, \quad
\omega(\f_i, \f_j) = 0
\]
for all $i,j \in \{1, \ldots, N\}$. Any sequence of $2N$ vectors on which $\omega$ takes the same values on pairs is a \emph{symplectic basis}.

Maps which preserve a symplectic form are called \emph{symplectomorphisms}. We will need to use a few particular linear symplectomorphisms. The proof below is a routine verification.
\begin{lem}
\label{Lem:symplectomorphisms_fixing_fs}
In the standard symplectic vector space $(\R^{2N}, \omega)$ as above, the following linear transformations are symplectomorphisms:
\begin{enumerate}
\item
For $j,k \in \{1, \ldots, N\}$, $j\neq k$, and any $a \in \R$, map $\e_j \mapsto \e_j + a \f_k$, $\e_k \mapsto \e_k + a \f_j$, and leave all other standard basis vectors unchanged.
\item
For $j \in \{1, \ldots, N\}$ and any $a \in \RR$, map $\e_j \mapsto \e_j + a \f_j$, and leave all other standard basis vectors unchanged. \qed
\end{enumerate}
\end{lem}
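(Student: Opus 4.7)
The plan is a direct verification on basis vectors. A linear map $T$ on $(\R^{2N}, \omega)$ is a symplectomorphism if and only if $\omega(T(u), T(v)) = \omega(u, v)$ for every pair $u, v$ drawn from the standard symplectic basis $(\e_1, \f_1, \ldots, \e_N, \f_N)$; bilinearity of $\omega$ does the rest. In both parts of the lemma, almost every basis vector is fixed by $T$, so any pairing between fixed vectors is preserved trivially, and only pairings involving a moved vector need to be checked.

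For part (i), the only moved vectors are $T(\e_j) = \e_j + a\f_k$ and $T(\e_k) = \e_k + a\f_j$. I would expand $\omega(T(\e_j), T(\e_k))$ and each $\omega(T(\e_j), T(w))$, $\omega(T(\e_k), T(w))$ for fixed basis vectors $w$, using bilinearity together with $\omega(\e_i, \f_l) = \delta_{il}$ and $\omega(\e_i, \e_l) = \omega(\f_i, \f_l) = 0$. The single interesting calculation is $\omega(\e_j + a\f_k, \e_k + a\f_j) = 0 + a - a + 0 = 0$, matching $\omega(\e_j, \e_k) = 0$; every other check reduces to a one-term computation that automatically matches the original pairing. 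Part (ii) is even lighter: only $T(\e_j) = \e_j + a\f_j$ moves, and the single pairing that could conceivably change is $\omega(T(\e_j), T(\f_j)) = \omega(\e_j, \f_j) + a\,\omega(\f_j, \f_j) = 1$, which indeed matches.

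There is no genuine obstacle here, and the paper rightly labels the proof ``routine''. The only thing to be careful about is enumerating every pair of basis vectors whose image pairing could in principle differ from the original, so that no case is missed; after this bookkeeping, each individual check is a one- or two-term bilinear calculation using only the defining pairings of the standard symplectic basis.
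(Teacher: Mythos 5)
Your proof is correct and is exactly the ``routine verification'' the paper alludes to (the paper omits the details entirely). You check $\omega$ on all relevant pairs of basis vectors, and the key computations $\omega(\e_j + a\f_k, \e_k + a\f_j) = a - a = 0$ and $\omega(\e_j + a\f_j, \f_j) = 1$ are carried out correctly.
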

In fact, it is not difficult to show that the linear symplectomorphisms above generate the group of linear symplectomorphisms which fix all $\f_j$. If we reorder the standard basis $(\e_1, \ldots, \e_n, \f_1, \ldots, \f_n)$, the symplectic matrices fixing the Lagrangian subspace spanned by the $\f_j$ have matrices of the form
\[
\begin{bmatrix} I & 0 \\ A & I \end{bmatrix}
\]
where $I$ is the $n \times n$ identity matrix and $A$ is an $n \times n$ symmetric matrix. These form a group isomorphic to the group of $n \times n$ real symmetric matrices under addition.

Returning to the Neumann-Zagier matrix $\NZ$, observe that its row vectors lie in $\R^{2n}$, where $n$ (as always) is the number of tetrahedra. These vectors behave nicely with respect to $\omega$.
\begin{thm}[Neumann--Zagier \cite{NeumannZagier}]\label{Thm:NeumannZagier}
With $R^G_k, R^\mfm_k, R^\mfl_k$ and $\omega$ as above:
\begin{enumerate}
\item For all $j,k \in \{1, \dots, n\}$, we have $\omega(R^G_j, R^G_k)=0$.
\item For all $j \in \{1, \ldots, n\}$ and $k \in \{1, \ldots, n_\mfc\}$, we have $\omega(R^G_j, R^\mfm_k)=\omega(R^G_j,R^\mfl_k)=0$.
\item For all $j,k \in \{1, \ldots, n_\mfc\}$, we have $\omega(R^\mfm_j,R^\mfl_k)= 2 \delta_{jk}$.
\item The row vectors $R^G_1, \dots, R^G_n$ span a subspace of dimension $n-n_\mfc$.
\item The rank of $\NZ$ is $n+n_\mfc$.
\end{enumerate}
\end{thm}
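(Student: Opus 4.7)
My plan is to give the symplectic form $\omega$ a topological meaning by interpreting the rows of $\NZ$ as incidence vectors of $1$-cycles on the cusp tori, and then to deduce each of (i)--(v) from the resulting intersection-theoretic description. The cusp triangulation of $\T_k$ has one triangle per tetrahedron tip at that cusp, one edge per incident face, and one vertex per edge of $M$ meeting the cusp. In this picture, the gluing row $R^G_k$ is the reduced incidence vector of a small loop $\gamma_k$ encircling the vertex dual to $E_k$, while $R^\mfm_k$ and $R^\mfl_k$ are the reduced incidence vectors of the chosen cusp curves.

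The technical heart of the argument is the intersection-number formula
\[
\omega(R_\alpha, R_\beta) \;=\; 2 \, \langle \alpha, \beta \rangle
\]
for any two transverse $1$-cycles $\alpha, \beta$ on the cusp tori, where $R_\alpha, R_\beta$ are their reduced incidence vectors and $\langle \cdot, \cdot \rangle$ is the algebraic intersection form. I would prove this locally, tetrahedron by tetrahedron and corner by corner, showing that each transverse crossing of $\alpha$ and $\beta$ inside a triangle of a cusp triangulation contributes equally to both sides once orientation conventions are aligned. The factor of $2$ is explained by the fact that the reduction $d=a-c$, $d'=b-c$ converts $\omega$ into the cyclic form $(a,b,c),(a',b',c')\mapsto (ab'-ba') + (bc'-cb') + (ca'-ac')$ on triangles, and the cyclic-sum structure produces a doubling relative to the elementary intersection pairing.

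Granting the formula, (i) and (ii) are immediate because each $\gamma_k$ bounds a disc on the cusp torus and so has vanishing algebraic intersection with any cycle. Part (iii) follows because, under the right-hand-rule orientation convention, $\mfm_j$ and $\mfl_k$ meet once transversely when $j=k$ and not at all otherwise, while no two meridians or two longitudes cross. For (iv), the upper bound $\dim \Span\{R^G_k\} \leq n - n_\mfc$ is a short symplectic linear-algebra argument: by (iii) the cusp rows span a symplectic subspace $W$ of dimension $2n_\mfc$, so its $\omega$-complement $V$ is symplectic of dimension $2n - 2n_\mfc$; by (ii) the gluing rows lie in $V$, and by (i) they are isotropic there, hence span at most a Lagrangian of dimension $n - n_\mfc$. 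For the matching lower bound one may either exhibit $n_\mfc$ explicit independent relations among the $R^G_k$, one per cusp, coming from the nullhomology of the sum of all vertex-loops on a given cusp torus, or invoke Thurston's theorem that the hyperbolic deformation space has complex dimension $n_\mfc$. Finally, (v) follows by assembly: by (iii) the $2n_\mfc$ cusp rows span a symplectic subspace independent from the $(n-n_\mfc)$-dimensional span of gluing rows by (ii), for total rank $(n - n_\mfc) + 2n_\mfc = n+n_\mfc$.

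The main obstacle will be the local verification of the intersection-number formula: tracking signs carefully through the oriented labelling of \refdef{OrLabelling}, the anticlockwise corner convention of \refdef{abc-incidence}, and the symplectic form \refeqn{SymplecticForm} is fiddly, although essentially combinatorial once a single corner has been worked out. Pinning down the lower bound in (iv) without importing analytic input from Thurston's deformation theorem is also a delicate combinatorial exercise, requiring one to verify that the $n_\mfc$ cusp-torus relations are the only relations among the gluing rows.
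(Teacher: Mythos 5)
The paper does not give its own proof of this theorem---it is cited from Neumann--Zagier \cite{NeumannZagier}---so there is no internal proof to compare against. That said, your proposed route, namely reducing $\omega$ to (twice) the algebraic intersection form of $1$-cycles on the cusp tori, is essentially Neumann and Zagier's own, and (i)--(iii) are direct consequences of the intersection-number formula once the local sign bookkeeping is carried out, with (v) following as you describe once (iv) is in hand. (One imprecision: $R^G_k$ is the incidence vector of the \emph{pair} of vertex-loops around the two ends of $E_k$ in the cusp tori, not a single loop, but both bound discs so nothing is lost.)

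The genuine gap is in (iv). Your symplectic argument correctly gives $\dim\Span\{R^G_k\} \leq n - n_\mfc$. But what you propose for the ``matching lower bound'' does not prove it: exhibiting $n_\mfc$ independent relations among the $R^G_k$, one per cusp, shows $\dim \leq n - n_\mfc$ a second time, not $\dim \geq n - n_\mfc$. What must actually be shown is that the kernel has dimension \emph{at most} $n_\mfc$---that the per-cusp relations are the \emph{only} relations---and this is precisely the nontrivial combinatorial content of \cite{NeumannZagier}, obtained there by a chain-complex/Euler-characteristic argument that is not recoverable from the symplectic setup alone. Your alternative of invoking Thurston's $n_\mfc$-dimensional deformation variety is also problematic: the theorem as stated is purely combinatorial and assumes no hyperbolic structure, and the standard proof of Thurston's dimension count itself uses the Neumann--Zagier rank result, so you risk circularity. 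Without this ingredient, (iv) is unproven, and consequently so is (v).
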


In light of theorem \ref{Thm:NeumannZagier}(iv), by relabelling edges if necessary, we can assume a labelled triangulation has the property
that the first $n-n_\mfc$ rows of its Neumann--Zagier matrix are linearly independent. We will make this assumption throughout.

According to theorem \ref{Thm:NeumannZagier}, the values of $\omega$ on pairs of vectors taken from the list of $n+n_\mfc$ vectors $\left( R^G_1, \ldots, R^G_{n-n_\mfc}, R^\mfm_1, \half R^\mfl_1, \ldots, R^\mfm_{n_\mfc}, \half R^\mfl_{n_\mfc} \right)$ agree with the value of $\omega$ on corresponding pairs in the list $(\f_1, \ldots, \f_{n-n_\mfc}, \e_{n-n_\mfc+1}, \f_{n-n_\mfc+1}, \ldots, \e_n, \f_n)$. For $R^G_1, \ldots, R^G_{n-n_\mfc}$ linearly independent, there is a linear symplectomorphism sending each vector in the first list to the corresponding vector in the second. 

Accordingly, as observed by Dimofte~\cite{DimofteQRCS} the list of $n+n_\mfc$ vectors
\[ \left( R^G_1, \ldots, R^G_{n-n_\mfc}, R^\mfm_1, \half R^\mfl_1, \ldots, R^\mfm_{n_\mfc}, \half R^\mfl_{n_\mfc} \right)
\]
extends to a symplectic basis for $\R^{2n}$,
\[
\left( R^\Gamma_1, R^G_1, \ldots, R^\Gamma_{n-n_\mfc}, R^G_{n-n_\mfc}, R^\mfm_1, \half R^\mfl_1, \ldots, R^\mfm_{n_\mfc}, \half R^\mfl_{n_\mfc} \right),
\]
with the addition of $n-n_\mfc$ vectors, denoted $R^\Gamma_1, \ldots, R^\Gamma_{n-n_\mfc}$.
Being a symplectic basis means that, in addition to the equations of \refthm{NeumannZagier}(i)--(iii), we also have
\begin{gather*}
\omega(R^\Gamma_j, R^\Gamma_k) = 0
\quad \text{and} \quad
\omega(R^\Gamma_j, R^G_k)  =\delta_{j,k}
\quad \text{for all $j,k\in \{1, \dots, n-n_\mfc\}$, and} \\
\omega(R^\Gamma_j,R^\mfm_k) = \omega(R^\Gamma_j,R^\mfl_k) = 0
\quad \text{for all $j \in \{ 1, \ldots, n-n_\mfc \}$ and $k \in \{1, \ldots, n_\mfc \}$.}
\end{gather*}

Indeed, the $R^\Gamma_j$ may be found by solving the equations above: given $R^G_k, R^\mfm_k, R^\mfl_k$, we may solve successively for $R^\Gamma_1, R^\Gamma_2, \ldots, R^\Gamma_{n-n_\mfc}$. Being solutions of linear equations with rational coefficients, we can find each $R^\Gamma_j \in \Q^{2n}$.

\begin{remark}\label{Rmk:NonUnique}
  Note that the $R^\Gamma_j$ are not unique: there are many solutions to the above equations. Distinct solutions are related precisely by the linear symplectomorphisms of $\R^{2n}$ fixing an $(n+n_\mfc)$-dimensional coisotropic subspace. Following the discussion after \reflem{symplectomorphisms_fixing_fs}, such symplectomorphisms are naturally bijective with $(n-n_\mfc) \times (n-n_\mfc)$ real symmetric matrices. Hence the space of possible $(R^\Gamma_1, \ldots R^\Gamma_{n-n_\mfc})$ has dimension $\half \left( n - n_\mfc \right) \left( n - n_\mfc + 1 \right)$.
\end{remark}

For $k \in \{1, \dots, n-n_\mfc\}$, write
\[ R^\Gamma_k = \left( f_{k,1} \quad f_{k,1}' \quad \dots \quad f_{k,n} \quad f_{k,n}' \right). \]

The symplectic basis $(R^G_1, R^\Gamma_1, \ldots, R^G_{n-n_\mfc}, R^\Gamma_{n-n_\mfc}, R^\mfm_1, \half R^\mfl_1, \ldots, R^\mfm_{n_\mfc}, \half R^\mfl_{n_\mfc})$ forms the  sequence of row vectors of a symplectic matrix, which we call $\SY \in \Sp(2n,\R)$. When $n_\mfc = 1$ we have
\begin{equation}
\label{Eqn:SYDefn}
\SY := 
\begin{bmatrix}
R^\Gamma_1 \\ R^G_1 \\ \vdots \\ R^\Gamma_{n-1} \\ R^G_{n-1} \\ R^\mfm \\ \half R^\mfl
\end{bmatrix}
=
\begin{bmatrix}
f_{1,1} & f_{1,1}' & f_{1,2} & f'_{1,2} & \cdots & f_{1,n} & f'_{1,n}  \\
d_{1,1}& d_{1,1}' & d_{1,2} & d'_{1,2} & \cdots & d_{1,n} & d'_{1,n}  \\
\vdots & \vdots & \vdots & \vdots & \ddots & \vdots & \vdots \\
f_{n-1,1} & f'_{n-1,1} & f_{n-1,2} & f'_{n-1,2} & \cdots & f_{n-1,n} & f'_{n-1,n} \\
d_{n-1,1} & d_{n-1,1}' & d_{n-1,2} & d'_{n-1,2} & \cdots & d_{n-1,n} & d'_{n-1,n} \\
\mu_1 & \mu_1' & \mu_2 & \mu_2' & \cdots & \mu_n & \mu_n' \\
\frac{1}{2} \lambda_1 & \frac{1}{2} \lambda_1' & \frac{1}{2} \lambda_2 & \frac{1}{2} \lambda_2' & \cdots & \frac{1}{2} \lambda_n & \frac{1}{2} \lambda_n'
\end{bmatrix}
\end{equation}

As a symplectic matrix, $\SY$ satisfies $(\SY)^T J (\SY)=J$, and for any vectors $V,W$, $\omega(V,W) = \omega(\SY\cdot V, \SY\cdot W)$.

\subsection{Linear and nonlinear equations and hyperbolic structures}

The symplectic matrix $\SY$ of \eqref{Eqn:SYDefn} shares several rows in common with $\NZ$. We will need to rearrange rows of various matrices, and so we make the following definition.
\begin{defn} 
\label{Def:FlatSharp}
Let $A$ be a matrix with $n+2n_\mfc$ rows, denoted $A_1, \ldots, A_{n+2n_\mfc}$.
\begin{enumerate}
\item
The submatrices $A^I, A^{II}, A^{III}$ consist of the first $n-n_\mfc$ rows, the next $n_\mfc$ rows, and the final $2n_\mfc$ rows. That is,
\[
A^I = \begin{bmatrix} A_1 \\ \vdots \\ A_{n-n_\mfc} \end{bmatrix}, \quad
A^{II} = \begin{bmatrix} A_{n-n_\mfc + 1} \\ \vdots \\ A_n \end{bmatrix}, \quad
A^{III} = \begin{bmatrix} A_{n+1} \\ \vdots \\ A_{n+2n_\mfc} \end{bmatrix}, \quad \mbox{ so }
A = \begin{bmatrix} A^I \\ A^{II} \\ A^{III}
\end{bmatrix}.
\]
\item
The matrix $A^\flat$ consists of the rows of $A^I$ followed by the rows of $A^{III}$. In other words, it is the matrix of $n+n_\mfc$ rows
\[
A^\flat = \begin{bmatrix} A^I \\ A^{III} \end{bmatrix}.
\]
\end{enumerate}
\end{defn}
This matrix $A$ of \refdef{FlatSharp} includes the case of a $(n+2n_\mfc) \times 1$ matrix, i.e.\ a $(n+2n_\mfc)$-dimensional vector.

Observe that \refdef{FlatSharp} applies to the Neumann-Zagier matrix $\NZ$. The matrix $\NZ^I$ has rows $R^G_1, \ldots, R^G_{n-n_\mfc}$, which we may assume are linearly independent. By \refthm{NeumannZagier}(i) and (iv), the rows of $\NZ^I$ form a basis of an isotropic subspace, and the rows of $\NZ^{II}$ also lie in this subspace.
 The matrix $\NZ^{III}$ has rows $R^\mfm_1, R^\mfl_1, \ldots, R^\mfm_{n_\mfc}, R^\mfl_{n_\mfc}$. Theorem~\ref{Thm:NeumannZagier}(iv) and (v) imply that the rows of $\NZ^\flat$ form a basis for the rowspace of $\NZ$.

Similarly for the vector $C$, observe $C^I$ contains the entries $(2-c_1, \ldots, 2-c_{n-n_\mfc})$, and $C^{III}$ contains the entries $(-c^\mfm_1, -c^\mfl_1, \ldots, -c^\mfm_{n_\mfc}, -c^\mfl_{n_\mfc})$. For the holonomy vector $H$, we have $H^I$ and $H^{II}$ are zero vectors, while $H^{III}$ contains cusp holonomies.

The gluing equations \eqref{Eqn:NZGluing} can be written as
\begin{equation}
\label{Eqn:MatrixGluingEqns}
\begin{bmatrix} \NZ^I \\ \NZ^{II} \end{bmatrix}
\cdot Z = i \pi \begin{bmatrix} C^I \\ C^{II} \end{bmatrix}.
\end{equation}
The first $n-n_\mfc$ among these equations are given by
\begin{equation}
\label{Eqn:IndptMatrixGluingEqns}
\NZ^I \cdot Z = i \pi C^I.
\end{equation}
We have seen that the rows of $\NZ^I$ span the rows of $\NZ^{II}$, so  
knowing $\NZ^I \cdot Z$ determines $\NZ^{II} \cdot Z$.
But it is perhaps not so clear whether $\NZ^I \cdot Z = i \pi C^I$ implies that $\NZ^{II} \cdot Z = i \pi C^{II}$. However, as we now show, in a hyperbolic situation this is in fact the case. 

\begin{lem}
\label{Lem:IndptGluingEqns}
Suppose the triangulation $\TT$ has a hyperbolic structure. Then a vector $Z \in \C^{2n}$ satisfies equation \eqref{Eqn:MatrixGluingEqns} if and only if it satisfies equation \eqref{Eqn:IndptMatrixGluingEqns}.
\end{lem}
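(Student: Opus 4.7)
The forward implication is immediate: if $Z$ satisfies \eqref{Eqn:MatrixGluingEqns}, then in particular it satisfies the subsystem \eqref{Eqn:IndptMatrixGluingEqns}. So the content is the reverse direction, and the plan is to reduce it to a statement about the constant vector $C$.

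The key structural input is \refthm{NeumannZagier}(iv): the full collection $R^G_1, \ldots, R^G_n$ of gluing rows spans a space of dimension only $n - n_\mfc$. Under our standing assumption that the first $n-n_\mfc$ such rows are independent, this means every row of $\NZ^{II}$ is a $\Q$-linear combination of the rows of $\NZ^I$. Equivalently, there exists a matrix $M \in \Q^{n_\mfc \times (n-n_\mfc)}$ with
\[
\NZ^{II} = M \cdot \NZ^I.
\]
Consequently, for any $Z \in \C^{2n}$ satisfying $\NZ^I \cdot Z = i\pi C^I$, we automatically have
\[
\NZ^{II} \cdot Z \;=\; M \cdot \NZ^I \cdot Z \;=\; i\pi \, M \cdot C^I.
\]
So the lemma reduces to the identity $M \cdot C^I = C^{II}$.

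This is exactly where the hyperbolicity hypothesis enters. A hyperbolic triangulation $\TT$ furnishes a distinguished solution $Z_0$ of the full system: by \reflem{LogEqns_NZ}(iv), there exists $Z_0$ with $\NZ \cdot Z_0 = H_0 + i\pi C$ for some holonomy vector $H_0$. Since $H^I_0 = 0 = H^{II}_0$ (the first $n$ entries of $H$ are zero by \refdef{ZzHC}), this particular $Z_0$ simultaneously satisfies $\NZ^I \cdot Z_0 = i\pi C^I$ and $\NZ^{II} \cdot Z_0 = i\pi C^{II}$. Substituting the former into $\NZ^{II} = M \cdot \NZ^I$ yields
\[
i\pi \, C^{II} \;=\; \NZ^{II} \cdot Z_0 \;=\; M \cdot \NZ^I \cdot Z_0 \;=\; i\pi \, M \cdot C^I,
\]
so indeed $M \cdot C^I = C^{II}$. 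Combined with the previous display, this gives $\NZ^{II} \cdot Z = i\pi C^{II}$ for any $Z$ satisfying \eqref{Eqn:IndptMatrixGluingEqns}, completing the proof.

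No step is really an obstacle here; the only subtlety is to notice that the hyperbolic solution $Z_0$ is being used purely to certify a linear compatibility condition on constants ($M \cdot C^I = C^{II}$), after which the argument is pure linear algebra. The statement would fail without some such compatibility: a triangulation could a priori satisfy $\NZ^I \cdot Z = i\pi C^I$ without satisfying the remaining $n_\mfc$ gluing equations if the full system were inconsistent, but hyperbolicity rules this out by producing an explicit simultaneous solution.
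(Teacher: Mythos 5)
Your proof is correct and, at its core, takes the same approach as the paper: both arguments use \refthm{NeumannZagier}(iv) to establish that the $\NZ^{II}$ rows are determined by the $\NZ^I$ rows, and both use the hyperbolicity hypothesis only to produce one simultaneous solution $Z_0$ of the full system, which is what certifies consistency. The bookkeeping differs slightly: the paper compares dimensions of the two affine solution spaces (one contained in the other, same dimension, nonempty, hence equal), whereas you make the linear dependence $\NZ^{II} = M\,\NZ^I$ explicit and reduce the claim to the compatibility identity $M \cdot C^I = C^{II}$, which you then verify by plugging in $Z_0$; this is a mild reformulation of the same argument and is equally valid.
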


\begin{proof}
Hyperbolic structures (not necessarily complete) give solutions to the gluing equations $Z = (Z_1, Z'_1, \ldots, Z_n, Z'_n) \in \C^{2n}$;
hence the solution space of \eqref{Eqn:MatrixGluingEqns} is nonempty. Since equations \eqref{Eqn:IndptMatrixGluingEqns} are a subset of those of \eqref{Eqn:MatrixGluingEqns}, the solution space of \eqref{Eqn:IndptMatrixGluingEqns} is also nonempty.

Since both matrices $\begin{bmatrix} \NZ^I \\ \NZ^{II} \end{bmatrix}$ and $\NZ^I$ have rank $n-n_\mfc$, the solution spaces of both \eqref{Eqn:MatrixGluingEqns} and \eqref{Eqn:IndptMatrixGluingEqns} have the same dimension: $2n - (n-n_\mfc) = n+n_\mfc$. 
\end{proof}

Thus, some of the gluing equations of \eqref{Eqn:NZGluing}, or equivalently of \eqref{Eqn:MatrixGluingEqns}, are redundant. The same is true of the larger system \eqref{Eqn:NZGluingCuspEqn}. Then $\NZ^\flat$ is a more efficient version of the Neumann-Zagier matrix, containing only necessary information for computing hyperbolic structures.

As discussed at the end of \refsec{Triangulations_gluing_cusp}, the solution spaces of these equations do not in general coincide with spaces of hyperbolic structures. 
The solution space of \eqref{Eqn:IndptMatrixGluingEqns} contains the space of hyperbolic structures on the triangulation $\TT$, but is strictly larger. These equations treat $Z_j$ and $Z'_j$ as independent variables, but of course they are not. In a hyperbolic structure, $z_j = e^{Z_j}$ and $z'_j = e^{Z'_j}$ are related by the equations \eqref{Eqn:NotProd}.

Indeed, the solution space of the linear equations \eqref{Eqn:IndptMatrixGluingEqns} has dimension $n+n_\mfc$, but there are a further $n$ conditions imposed by the relations $z_j + (z_j')^{-1} -1 = 0$ of \eqref{Eqn:NotProd}. As discussed in the proof of \cite[prop. 2.3]{NeumannZagier}, these $n$ conditions are independent and the result is a variety of dimension $n_\mfc$. However, as we just saw, this variety may contain points that do not correspond to hyperbolic tetrahedra. Moreover, it may not contain all hyperbolic structures, as not every hyperbolic structure may be able to be realised by the triangulation $\TT$.

However, by Thurston's hyperbolic Dehn surgery theorem \cite{Thurston:3DGT}, the space of hyperbolic structures on $M$ is also $n_\mfc$-dimensional. So at a point of the variety defined by the linear equations \eqref{Eqn:IndptMatrixGluingEqns} and the nonlinear equations \eqref{Eqn:NotProd} describing a hyperbolic structure, the variety locally coincides with the space of hyperbolic structures. 

We summarise this section with the following statement.
\begin{lem}
\label{Lem:NZIndptGluingEqns}
Let $\TT$ be a hyperbolic triangulation of $M$, labelled so that its Neumann--Zagier matrix $\NZ$ has rows $R^G_1, \ldots, R^G_{n-n_\mfc}$ linearly independent.
\begin{enumerate}
\item 
The logarithmic gluing equations, expressed equivalently by \eqref{Eqn:NZGluing} or \eqref{Eqn:MatrixGluingEqns}, are equivalent to the smaller independent set of equations \eqref{Eqn:IndptMatrixGluingEqns}.
\item
The variety $V$ defined by the solutions of these linear equations \eqref{Eqn:IndptMatrixGluingEqns}, together with the nonlinear equations \eqref{Eqn:NotProd}, has dimension $n_\mfc$. 
The hyperbolic structures on $\TT$ correspond to a subset of $V$. Near a point of $V$ corresponding to a hyperbolic structure on $\TT$, $V$ parametrises hyperbolic structures on $\TT$.
\item
The logarithmic gluing and cusp equations for $\TT$ are equivalent to 
\begin{equation}
  \label{Eqn:NZIndptGluingCuspEqns}
  \pushQED{\qed}
  \NZ^\flat \cdot Z = H^\flat + i \pi C^\flat. \qedhere
  \popQED
\end{equation} 
\end{enumerate} 
\end{lem}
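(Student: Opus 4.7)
The three parts should follow in order, with most of the work already in hand.

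For part (i), the equivalence of \eqref{Eqn:NZGluing}, \eqref{Eqn:MatrixGluingEqns} and \eqref{Eqn:IndptMatrixGluingEqns} is essentially \reflem{IndptGluingEqns} applied after observing that \eqref{Eqn:NZGluing} and \eqref{Eqn:MatrixGluingEqns} are literally the same equations rewritten in matrix form. So I would begin by noting this tautological equivalence, and then invoke \reflem{IndptGluingEqns} to pass from the $n$-row system \eqref{Eqn:MatrixGluingEqns} to the $(n-n_\mfc)$-row system \eqref{Eqn:IndptMatrixGluingEqns}. The hypothesis that $\TT$ is hyperbolic ensures that the solution space is nonempty, so that the dimension count in \reflem{IndptGluingEqns} applies.

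For part (ii), I would argue as in the proof of \cite[Prop. 2.3]{NeumannZagier}. The linear system \eqref{Eqn:IndptMatrixGluingEqns} has coefficient matrix $\NZ^I$ of rank $n-n_\mfc$ (by our labelling assumption and \refthm{NeumannZagier}(iv)), so its solution set is an affine subspace of $\C^{2n}$ of dimension $n+n_\mfc$. Each nonlinear relation $z_j + (z'_j)^{-1}-1=0$ of \eqref{Eqn:NotProd} imposes one condition on $(Z_j,Z'_j)$ only. The key step is showing that these $n$ nonlinear conditions are independent, cutting the dimension down to $n_\mfc$. This follows by computing the differential of the map $(Z_1,Z'_1,\dots,Z_n,Z'_n)\mapsto(z_j+(z'_j)^{-1}-1)_{j=1,\dots,n}$ restricted to the linear solution space; Neumann--Zagier show this differential is surjective at a point arising from the complete hyperbolic structure, and a dimension count then yields $\dim V = n_\mfc$. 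For the remaining claim, any hyperbolic structure on $\TT$ satisfies both the logarithmic gluing equations and the relations \eqref{Eqn:NotProd} (see \reflem{LogEqns_NZ}(iv)), hence lies in $V$. Since by Thurston's hyperbolic Dehn surgery theorem \cite{Thurston:3DGT} the space of hyperbolic structures on $M$ is $n_\mfc$-dimensional and $V$ is also $n_\mfc$-dimensional, matching dimensions at a smooth point yields the local identification.

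For part (iii), I would combine part (i) with the observation that \refdef{FlatSharp} assembles $\NZ^\flat$ from $\NZ^I$ (the independent gluing rows) followed by $\NZ^{III}$ (the cusp rows), and similarly assembles $C^\flat$ and $H^\flat$. The system \eqref{Eqn:NZGluingCuspEqn} of \reflem{LogEqns_NZ} splits as the gluing block $\left[\begin{smallmatrix}\NZ^I\\\NZ^{II}\end{smallmatrix}\right]\cdot Z = i\pi\left[\begin{smallmatrix}C^I\\C^{II}\end{smallmatrix}\right]$ (since $H^I, H^{II}$ vanish) together with the cusp block $\NZ^{III}\cdot Z = H^{III}+i\pi C^{III}$. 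By part (i), the gluing block is equivalent to $\NZ^I\cdot Z = i\pi C^I$, and the cusp block is unchanged. Stacking these two yields precisely \eqref{Eqn:NZIndptGluingCuspEqns}.

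The main obstacle is the dimension-counting step in part (ii): one must verify the independence of the nonlinear constraints \eqref{Eqn:NotProd} on the linear solution space and identify $V$ locally with the deformation space, which is the content of Neumann--Zagier's proposition and ultimately rests on Thurston's deformation theory. Parts (i) and (iii) are essentially bookkeeping once \reflem{IndptGluingEqns} is in hand.
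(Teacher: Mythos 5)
Your proposal is correct and matches the paper's implicit argument: the lemma is presented as a summary of the preceding discussion, which uses \reflem{IndptGluingEqns} for part (i), cites Neumann--Zagier's Proposition~2.3 for the independence of the nonlinear conditions and Thurston's Dehn surgery theorem for the local identification in part (ii), and the block decomposition of $\NZ^\flat$ for part (iii). Your reconstruction of the differential argument behind Neumann--Zagier's independence claim and your explicit stacking of the gluing and cusp blocks are faithful elaborations of what the paper leaves implicit.
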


\subsection{Symplectic change of variables}

Dimofte in \cite{DimofteQRCS} considered using the matrix $\SY$ to \emph{change variables} in the logarithmic gluing and cusp equations. 

If $M$ is hyperbolic, by \reflem{NZIndptGluingEqns} the gluing and cusp equations are equivalent to \eqref{Eqn:NZIndptGluingCuspEqns}. Observe that the rows of $\NZ^\flat$ are (up to a factor of $\half$ in the rows $R_k^\mfl$) a subset of the rows of $\SY$. Indeed, obtain $\SY$ from $\NZ^\flat$ by multiplying $R_k^\mfl$ rows by $\half$, and inserting rows $R^\Gamma_1, \ldots, R^\Gamma_{n-n_\mfc}$.

In the equations of \eqref{Eqn:NZIndptGluingCuspEqns} $Z = (Z_1, Z'_1, \ldots, Z_n, Z'_n)^T$ are regarded as variables, and we now change them using $\SY$. 
\begin{defn}
\label{Def:Gamma_variables}
Given a labelled hyperbolic triangulation $\TT$ and a choice of symplectic matrix $\SY$, define the collection of variables \[ \Gamma = \left(\Gamma_1, G_1, \dots, \Gamma_{n-n_\mfc}, G_{n-n_\mfc}, M_1,\half L_1, \dots, M_{n_\mfc}, \half L_{n_\mfc}\right)^T \] by $\Gamma = \SY \cdot Z$.
\end{defn}

In other words,
\[
\Gamma = 
\SY \; \begin{bmatrix} Z_1 \\ Z_1' \\ \vdots \\ Z_n \\ Z_n' \end{bmatrix}
\quad \Leftrightarrow \quad 
\left\{
\begin{array}{rll}
\Gamma_k &= R^\Gamma_k \cdot Z, \quad &\text{for $k \in \{1, \ldots, n-n_\mfc\}$,} \\
G_k &= R^G_k \cdot Z, \quad &\text{for $k \in \{1, \ldots, n-n_\mfc\}$,} \\
M_k &= R_k^\mfm \cdot Z, \quad &\text{for $k\in \{1, \ldots, n_{\mfc}\}$,  and}\\
\half L_k &= \half R_k^\mfl \cdot Z, \quad &\text{for $k\in \{1, \ldots, n_{\mfc}\}$.}
\end{array}
\right.
\]

\begin{lem}
Let $\TT$ be a labelled hyperbolic triangulation, and $\SY$ a matrix defining the variables $\Gamma$. Then the logarithmic gluing and cusp equations are equivalent to
\begin{equation}
\label{Eqn:VarRelations}
  G_k = i\pi \left(2 - c_k \right), \quad
	M_j = \log m_j - i\pi c_j^\mfm, \quad
	L_j = \log\ell_j - i\pi c_j^\mfl.
\end{equation}
\end{lem}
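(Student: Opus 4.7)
The plan is to show that the displayed equations \eqref{Eqn:VarRelations} are simply the system $\NZ^\flat \cdot Z = H^\flat + i\pi C^\flat$ rewritten in the new variables, and then to appeal to \reflem{NZIndptGluingEqns}(iii) together with the invertibility of $\SY$ to conclude equivalence with the full logarithmic gluing and cusp equations.

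First I would unpack $\NZ^\flat \cdot Z = H^\flat + i\pi C^\flat$ row by row. By \refdef{FlatSharp} and the construction of $\NZ$, the rows of $\NZ^\flat$ are $R^G_1, \ldots, R^G_{n-n_\mfc}, R^\mfm_1, R^\mfl_1, \ldots, R^\mfm_{n_\mfc}, R^\mfl_{n_\mfc}$, while $H^\flat + i\pi C^\flat$ has entries $i\pi(2-c_k)$ for the gluing rows and $\log m_j - i\pi c^\mfm_j$, $\log \ell_j - i\pi c^\mfl_j$ for the cusp rows. So this matrix equation is exactly
\[
R^G_k \cdot Z = i\pi(2-c_k), \qquad R^\mfm_j \cdot Z = \log m_j - i\pi c^\mfm_j, \qquad R^\mfl_j \cdot Z = \log \ell_j - i\pi c^\mfl_j.
\]

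Next I would substitute the definitions of the new variables from \refdef{Gamma_variables}. Since $G_k = R^G_k \cdot Z$, the first batch of equations becomes $G_k = i\pi(2-c_k)$. Since $M_j = R^\mfm_j \cdot Z$, the second batch becomes $M_j = \log m_j - i\pi c^\mfm_j$. For the longitude rows, the factor of $\half$ appearing in both $\SY$ and the definition of $\half L_j$ cancels out: $\half L_j = \half R^\mfl_j \cdot Z$ gives $L_j = R^\mfl_j \cdot Z = \log \ell_j - i\pi c^\mfl_j$. These are precisely the equations in \eqref{Eqn:VarRelations}.

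Finally, I would note that the $\Gamma_k$ variables are entirely unconstrained by \eqref{Eqn:VarRelations}, which matches the fact that the rows $R^\Gamma_k$ of $\SY$ contribute no equation to $\NZ^\flat \cdot Z$; this is not an issue because $\SY \in \Sp(2n,\R)$ is invertible, so the change of coordinates $\Gamma = \SY \cdot Z$ is a bijection between $Z$-space and $\Gamma$-space, and any linear system on $Z$ is equivalent to its translate on $\Gamma$. Combining this observation with \reflem{NZIndptGluingEqns}(iii) gives the stated equivalence. The only real bookkeeping obstacle is the factor of $\half$ on the longitudinal rows, and as noted it cancels cleanly; the rest of the argument is direct substitution.
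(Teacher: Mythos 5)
Your proof is correct and follows essentially the same route as the paper's: unpacking the rows of $\NZ^\flat \cdot Z = H^\flat + i\pi C^\flat$, substituting the definitions of $G_k$, $M_j$, $L_j$ from \refdef{Gamma_variables}, and relying on \reflem{NZIndptGluingEqns}(iii) for the reduction from the full system. You are slightly more explicit than the paper about the $\half$ cancellation on the longitude rows and about invoking \reflem{NZIndptGluingEqns}(iii), but the substance is identical.
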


In the new variables, these equations are simplified. Note that the $\Gamma_k$ variables do not appear in \eqref{Eqn:VarRelations}.

\begin{proof}
The first $n-n_\mfc$ rows of \eqref{Eqn:NZIndptGluingCuspEqns} express the gluing equations as $R^G_k \cdot Z = i \pi (2 - c_k)$, for $k \in \{1, \ldots, n-n_\mfc\}$. 
Remaining rows of \eqref{Eqn:NZIndptGluingCuspEqns} express cusp equations as $R_j^\mfm \cdot Z = \log m_j - i \pi c_j^\mfm$ and $R_j^\mfl \cdot Z = \log \ell_j - c_j^\mfl$.
 \end{proof}

The symplectic change of variables involves writing variables $Z$ in terms of the variables $\Gamma$. That is, we need to invert $\SY$.

As $\SY$ is symplectic, $(\SY)^T J (\SY) = J$, so its inverse is given by $\SY^{-1} = -J (\SY)^T J$, or
\begin{equation}
\label{Eqn:SYInverse}
\begin{bmatrix}
d_{1,1}'  & -f'_{1,1} & \cdots & d'_{n-n_\mfc,1}   & -f'_{n-n_\mfc,1} & \half\lambda'_{1,1} & -\mu'_{1,1} & \cdots & \half\lambda'_{n_\mfc,1} & -\mu'_{n_\mfc,1} \\
-d_{1,1}  & f_{1,1}   & \cdots & -d_{n-n_\mfc,1}   & f_{n-n_\mfc,1}   & -\half\lambda_{1,1} & \mu_{1,1} & \cdots & -\half\lambda_{n_\mfc,1} & \mu_{n_\mfc,1} \\
\vdots & \vdots  & \ddots    & \vdots & \vdots     & \vdots    & \vdots       & \ddots & \vdots & \vdots \\
d_{1,n}' & -f_{1,n}'  & \cdots & d'_{n-n_\mfc,n}   & -f'_{n-n_\mfc,n} & \half\lambda'_{1,n} & -\mu'_{1,n} & \cdots & \half\lambda'_{n_\mfc,n} & -\mu'_{n_\mfc,n} \\
-d_{1,n} & f_{1,n}    & \cdots & -d_{n-n_\mfc,n}   & f_{n-n_\mfc,n}   & -\half\lambda_{1,n} & \mu_{1,n} & \cdots & -\half\lambda_{n_\mfc,n} & \mu_{n_\mfc,n}
\end{bmatrix}
\end{equation}
Thus we explicitly express the $Z_j, Z'_j$ in terms of the variables of $\Gamma$, using $Z = (\SY)^{-1} \Gamma$.
\begin{align}
\label{Eqn:Z_inverted}
Z_j &= \sum_{k=1}^{n-n_\mfc} \left( d'_{k,j} \Gamma_k - f'_{k,j} G_k \right) + \half \sum_{k=1}^{n_\mfc} \left( \lambda'_{k,j} M_k - \mu'_{k,j} L_k \right) \\
\label{Eqn:Z'_inverted}
Z'_j &= \sum_{k=1}^{n-n_\mfc} \left( -d_{k,j} \Gamma_k + f_{k,j} G_k \right) + \half \sum_{k=1}^{n_\mfc} \left( -\lambda_{k,j} M_k + \mu_{k,j} L_k \right)
\end{align}

\subsection{Inverting without inverting}

It is possible to explicitly compute a symplectic matrix $\SY$, then invert it, express the variables $Z$ in terms of the variables $\Gamma$ by \eqref{Eqn:Z_inverted}--\eqref{Eqn:Z'_inverted}, and then solve to obtain the A-polynomial. However, we now show that we can perform this calculation without ever having to find $\SY$ or its inverse $\SY^{-1}$ explicitly --- \emph{provided} that we can find a certain sign term.

To see why this should be the case, note the following preliminary observation. Equations \eqref{Eqn:Z_inverted}--\eqref{Eqn:Z'_inverted} express $Z_j$ and $Z'_j$ in terms of the $\Gamma_k$, $G_k$, $M_i$ and $L_i$. The coefficients of the $\Gamma_k$, $M_i$ and $L_i$ are numbers which appear in the Neumann-Zagier matrix. The only coefficients which do not appear in $\NZ$ are the coefficients of the $G_k$. But the gluing equations \eqref{Eqn:VarRelations} say $G_k = i\pi (2-c_k)$, so upon exponentiation these terms only contribute a sign. In other words, up to sign, all the information we need to write the $Z_j$ in terms of the variables $\Gamma_k, G_k, L_i, M_i$ is already in the Neumann-Zagier matrix.

To implement this, observe that the matrix $-J(\NZ^\flat)^T$ shares many columns with $\SY^{-1}$:
\begin{equation}
\label{Eqn:MinusJNZFlatT}
-J(\NZ^\flat)^T
= \begin{bmatrix}
d_{1,1}' & d_{2,1}' & \cdots & d_{n-n_\mfc,1}' & \mu_{1,1}' & \lambda_{1,1}' & \cdots & \mu'_{n_\mfc,1} & \lambda'_{n_\mfc, 1} \\
-d_{1,1} & -d_{2,1} & \cdots & -d_{n-n_\mfc,1} & - \mu_{1,1} & - \lambda_{1,1} & \cdots & -\mu_{n_\mfc,1} & -\lambda_{n_\mfc,1} \\
\vdots & \vdots & \ddots & \vdots & \vdots & \vdots & \ddots &\vdots &\vdots\\
d_{1,n}' & d_{2,n}' & \cdots & d_{n-n_\mfc,n}' & \mu_{1,n}' & \lambda_{1,n}' & \cdots & \mu_{n_\mfc, n}' & \lambda_{n_\mfc, n}' \\
-d_{1,n} & -d_{2,n} & \cdots & -d_{n-n_\mfc,n} & -\mu_{1,n} & -\lambda_{1,n} & \cdots & -\mu_{n_\mfc,n} & -\lambda_{n_\mfc,n} 
\end{bmatrix}
\end{equation}
In particular, for any quantities $A_1, \ldots, A_{n-n_\mfc}, A_1^\lambda, A_1^\mu, \ldots, A_{n_\mfc}^\lambda, A_{n_\mfc}^\mu$,
\begin{gather*}
  \SY^{-1}
    \left[ A_1 \quad 0 \quad A_2 \quad 0 \quad \cdots \quad A_{n-n_\mfc} \quad 0 \quad A_1^\lambda \quad A_1^\mu  \quad \cdots \quad A_{n_\mfc}^{\lambda} \quad A_{n_\mfc}^\mu \right]^T = \\
-J(\NZ^\flat)^T \left[ A_1 \quad A_2 \quad \cdots \quad A_{n-n_\mfc} \quad -A_1^\mu \quad \half A_1^\lambda \quad \cdots \quad -A_{n_\mfc}^\mu \quad \half A_{n_\mfc}^\lambda
\right]^T
\end{gather*}

Splitting up the $\Gamma_k$ and $G_k$ terms, using \refdef{Gamma_variables} and informed by the gluing and cusp equations \eqref{Eqn:VarRelations}, we obtain
\begin{equation}
  \label{Eqn:GeneralChangeVars}
  Z
= \SY^{-1} \cdot \Gamma
= -J(\NZ^\flat)^T \overline{\Gamma} 
+ \SY^{-1} \overline{G}, 
\end{equation}
where $\overline{\Gamma}$ is the vector
\[
\overline{\Gamma} = \left[ \Gamma_1, \cdots, \Gamma_{n-n_\mfc}, -\half\log\ell_1, \half\log m_1, \cdots, -\half\log\ell_{n_\mfc}, \half\log m_{n_\mfc}\right]^T
\]
and $\overline{G}$ is
\[
\left[ 0, G_1, \cdots, 0, G_{n-n_\mfc}, (M_1-\log m_1), \half(L_1-\log \ell_1), \cdots, (M_{n_\mfc}-\log m_{n_\mfc}), \half (L_{n_\mfc}-\log \ell_{n_\mfc}) \right]^T
\]
The first term $-J(\NZ^{\flat})^T\overline{\Gamma}$ of \eqref{Eqn:GeneralChangeVars} only involves $\NZ$. The final vector $\overline{G}$ consists of the precise quantities which are fixed to be constants by the gluing and completeness equations \eqref{Eqn:VarRelations}. Indeed, \eqref{Eqn:VarRelations} says precisely that the final vector in equation \eqref{Eqn:GeneralChangeVars} is a vector of constants essentially identical in content to $\pi i C^\flat$. We define
\[
C^\# = \left[ 0, 2 - c_1, 0, 2 - c_2, \ldots, 0, 2 - c_{n-n_\mfc}, -c_1^{\mfm}, - \half c_1^{\mfl}, \ldots, -c_{n_\mfc}^{\mfm}, -\half c_{n_\mfc}^\mfl \right]^T,
\]
which is $C^\flat$, with some zeroes inserted, and some factors of one half. So the final vector in \eqref{Eqn:GeneralChangeVars} is set to $\pi iC^\#$, and we obtain the following.
\begin{prop}
\label{Prop:ChangeOfVars}
Given a hyperbolic triangulation, labelled so that its Neumann-Zagier matrix $\NZ$ has rows $R^G_1, \ldots, R^G_{n-n_\mfc}$ linearly independent,
and $\SY$ a
matrix defining the variables $\Gamma$, the logarithmic gluing and cusp equations are equivalent to
\begin{equation}
  \label{Eqn:CuspGluingChangeVars}
  \pushQED{\qed}
  Z = (-J)(\NZ^\flat)^T \overline{\Gamma} +\pi i \;  \SY^{-1} C^\#. \qedhere
  \popQED
\end{equation}
\end{prop}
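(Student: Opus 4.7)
The plan is to deduce the proposition from the tautological identity $Z=\SY^{-1}\Gamma$, obtained by inverting the definition $\Gamma = \SY\cdot Z$ of \refdef{Gamma_variables}. The strategy is to split $\Gamma$ into a ``free'' part, where the genuine unknowns $\Gamma_k$ and the logarithms $\log m_j,\log\ell_j$ live, and a ``constrained'' part $\overline{G}$, whose entries are precisely the quantities fixed to constants by the gluing and cusp equations \eqref{Eqn:VarRelations}. The payoff will be that the constrained part becomes $\pi i C^\#$, while the free part can be computed directly from $\NZ^\flat$, bypassing $\SY$ entirely.

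First I would establish the pure linear-algebra identity
\[
\SY^{-1}(\Gamma - \overline{G}) = -J(\NZ^\flat)^T \overline{\Gamma},
\]
valid without invoking any of the gluing or cusp equations. A direct inspection shows $\Gamma - \overline{G}$ has $\Gamma_k$ in position $2k-1$, zero in position $2k$ for $k\leq n-n_\mfc$, and $\log m_j, \half\log\ell_j$ in the final $2n_\mfc$ positions, so $\SY^{-1}(\Gamma - \overline{G})$ is a linear combination of only the odd-indexed columns among the first $2(n-n_\mfc)$ columns and the last $2n_\mfc$ columns of $\SY^{-1}$. Using the symplectic identity $\SY^{-1} = -J\SY^T J$ (equivalently, reading off \eqref{Eqn:SYInverse}), these columns are precisely $-JR^G_k$, $-\half JR^\mfl_j$ and $JR^\mfm_j$. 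On the other side, $-J(\NZ^\flat)^T \overline{\Gamma}$ uses the columns $-JR^G_k, -JR^\mfm_j, -JR^\mfl_j$ of $-J(\NZ^\flat)^T$, weighted by $\Gamma_k, -\half\log\ell_j, \half\log m_j$ respectively. A term-by-term matching, careful with the factors of $\half$, yields the identity.

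With that identity in hand, writing $Z = \SY^{-1}(\Gamma - \overline{G}) + \SY^{-1}\overline{G}$ gives $Z = -J(\NZ^\flat)^T\overline{\Gamma} + \SY^{-1}\overline{G}$, and by invertibility of $\SY^{-1}$ the proposition reduces to the equivalence $\overline{G} = \pi i C^\#$. Entry-by-entry this reads $G_k = i\pi(2-c_k)$, $M_j - \log m_j = -i\pi c_j^\mfm$, and $\half(L_j - \log\ell_j) = -\half i\pi c_j^\mfl$, which by \reflem{NZIndptGluingEqns} and \eqref{Eqn:VarRelations} are precisely the logarithmic gluing and cusp equations, completing the equivalence. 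The main obstacle is purely bookkeeping: carefully tracking the factors of $\half$ coming from the use of $\half R^\mfl_k$ in $\SY$, and aligning the column orderings when comparing $\SY^{-1}$ with $-J(\NZ^\flat)^T$.
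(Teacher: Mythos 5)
Your proposal is correct and follows essentially the same route as the paper: starting from the tautology $Z = \SY^{-1}\Gamma$, splitting $\Gamma$ into $\overline{G}$ and the complementary part, establishing the linear-algebra identity $\SY^{-1}(\Gamma - \overline{G}) = -J(\NZ^\flat)^T\overline{\Gamma}$ by matching columns of $\SY^{-1} = -J\SY^T J$ against those of $-J(\NZ^\flat)^T$, and then recognizing $\overline{G} = \pi i C^\#$ as a restatement of the gluing and cusp equations \eqref{Eqn:VarRelations}. The column-by-column verification you sketch is exactly what the paper's displayed identity for generic quantities $A_1,\ldots,A_{n_\mfc}^\mu$ encodes, so there is no substantive difference in approach.
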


Once we find a vector $B = \SY^{-1} C^\#$, \refprop{ChangeOfVars} allows us to express the $Z_j$ and $Z'_j$ in terms of the variables $\Gamma_1, \ldots, \Gamma_{n-1}$, and the holonomies $\ell_k, m_k$ of the longitudes and meridians, 
using only information already available in the Neumann-Zagier matrix. There is no need to find the extra vectors $R^\Gamma_k$ of the symplectic basis, or the matrix $\SY$.
If in addition $B$ is an \emph{integer} vector, then when we exponentiate \eqref{Eqn:CuspGluingChangeVars} to obtain the tetrahedron parameters $z_j = e^{Z_j}$ and $z'_j = e^{Z'_j}$, $B$ determines a sign. Hence we refer to this term as a sign term.

The approach outlined above may sound paradoxical: we avoid calculating the symplectic matrix $\SY$, by finding a vector $B=\SY^{-1} C^\#$. This seems to involve the symplectic matrix $\SY$ anyway! However, in the next section we show we can find $B$ by solving a simpler equation, involving only the Neumann-Zagier matrix, and then \emph{choose} $\SY$ so that $B = \SY^{-1} C^\#$. That is, we may use the flexibility in choosing $R^\Gamma_k$ of Remark~\ref{Rmk:NonUnique} to find appropriate $\SY$.

\subsection{The sign term}

We now demonstrate the existence of an $\SY$ and an integer vector $B$ satisfying $\SY \cdot B = C^\#$.

The rows of the matrix equation $\SY \cdot B = C^\#$ are 
\begin{gather}
\label{Eqn:RkGammaDotB}
R_k^\Gamma \cdot B = 0, \quad \text{for $k = 1, \ldots, n-n_\mfc$,} \\
\label{Eqn:GSignEqns}
R_k^G \cdot B = 2- c_k, \quad \text{for $k = 1, \ldots, n-n_\mfc$,} \\
\label{Eqn:CuspSignEqns}
R_k^\mfm \cdot B = - c_k^\mfm, \quad 
R_k^\mfl\cdot B = - c_k^\mfl, \quad \text{for $k=1, \ldots, n_{\mfc}$.}
\end{gather}
Equations \eqref{Eqn:GSignEqns}--\eqref{Eqn:CuspSignEqns} are exactly the equations in the rows of a matrix equation with $\NZ^\flat$:
\begin{equation}\label{Eqn:SignEquation}
  \NZ^\flat \cdot B = C^\flat.
\end{equation}

This equation has been studied by Neumann; it is known to always have an integer solution.
\begin{thm}[Neumann \cite{Neumann}, Theorem 2.4] \
\label{Thm:NeumannSolution}
\begin{enumerate}
\item
There exists an integer vector $B$ satisfying
$\NZ \cdot B = C$.
\item
Given an integer vector $B_0$ such that $\NZ \cdot B_0 = C$, the set of integer solutions to $\NZ \cdot B = C$ includes
\[
B_0 + \Span_\Z \left( J R^G_1, \ldots, J R^G_n \right)
= \left\{ B_0 + \sum_{k=1}^n a_k J R^G_k \; \mid \; a_1, \ldots, a_n \in \Z \right\}.
\]
\end{enumerate}
\end{thm}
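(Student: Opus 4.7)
The plan is to prove the two parts separately: part (ii) reduces to an immediate symplectic computation using \refthm{NeumannZagier}, while part (i), the existence of \emph{integer} solutions, is where the main difficulty lies and will require combinatorial/topological input.

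For part (ii), I would first observe that $J R^G_k$ is an integer vector for each $k$, since $R^G_k \in \Z^{2n}$ and $J$ merely permutes coordinates with sign changes. It then suffices to show $\NZ \cdot J R^G_k = 0$, for then any integer combination $B_0 + \sum_k a_k J R^G_k$ again satisfies $\NZ \cdot B = C$ and has integer entries. The $i$th entry of $\NZ \cdot J R^G_k$ equals $R_i J (R^G_k)^T = \omega(R_i, R^G_k)$, where $R_i$ ranges over the rows $R^G_1,\ldots,R^G_n,R^\mfm_1,R^\mfl_1,\ldots,R^\mfm_{n_\mfc},R^\mfl_{n_\mfc}$ of $\NZ$. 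By \refthm{NeumannZagier}(i) we have $\omega(R^G_j, R^G_k) = 0$, and by (ii) together with the skew-symmetry of $\omega$, we get $\omega(R^\mfm_j, R^G_k) = \omega(R^\mfl_j, R^G_k) = 0$ for all $j$. Every entry therefore vanishes and $J R^G_k$ lies in $\ker \NZ$.

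For part (i), I would first establish solvability of $\NZ \cdot B = C$ over $\R$, which implies solvability over $\Q$ since $\NZ$ and $C$ are integral. Pick any complete hyperbolic structure on $\TT$; it yields logarithmic parameters $Z_j, Z'_j$ satisfying $\NZ \cdot Z = H + i\pi C$ via \reflem{LogEqns_NZ}. At the complete structure the cusp holonomies $m_k, \ell_k$ are real (in fact $1$ for complete), so $H$ is a real vector. Taking imaginary parts gives $\NZ \cdot (\Im Z / \pi) = C$, so $C$ lies in the real rowspace of $\NZ$.

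The passage from a rational solution to an \emph{integer} one is the main obstacle. Equivalently, one must show that $C$ lies in the lattice $\NZ \cdot \Z^{2n} \subseteq \Z^{n+2n_\mfc}$, which cannot be arranged by rational manipulations alone. This has genuine topological content: the entries of $C$ encode the counts $c_k$, $c^\mfm_k$, $c^\mfl_k$ of $c$-edges incident to edges of $\TT$ and to the curves $\mfm_k, \mfl_k$, and the assertion can be interpreted as the vanishing of a certain integral cohomology class associated to $\TT$ --- equivalently, as the existence of a combinatorial \emph{flattening} of $\TT$, i.e.\ a coherent $\Z$-labelling of the three edge-pairs of each tetrahedron compatible around every edge and cusp. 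Given such a flattening, one reads off an integer vector $B$ and verifies $\NZ \cdot B = C$ row by row by summing labels around each edge (to obtain $2 - c_k$) and along each curve $\mfm_k, \mfl_k$ (to obtain $-c^\mfm_k$ or $-c^\mfl_k$). The main technical hurdle will be to set up the combinatorial counts so that the incidence sums reduce correctly modulo the $z''_j$-elimination that produced the entries $d,d',\mu,\mu',\lambda,\lambda'$ of $\NZ$, tracking signs consistently with the oriented labelling of each tetrahedron.
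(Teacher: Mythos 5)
This theorem is not proved in the paper; it is cited verbatim from Neumann's work (reference [Neumann], Theorem 2.4), with only the parenthetical remark that Neumann's result is sharper (incorporating a parity condition). So there is no in-paper proof to compare against, and the question is whether your proposal establishes the result on its own.

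Your argument for part (ii) is correct and complete. Since $(\NZ \cdot J R^G_k)_i = \omega(R_i, R^G_k)$ with $R_i$ ranging over rows of $\NZ$, the vanishing follows directly from \refthm{NeumannZagier}(i)--(ii) and the skew-symmetry of $\omega$; and $J R^G_k$ is manifestly integral. This is the easy half, and your route is the natural one.

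Part (i) is where the content of Neumann's theorem lives, and your proposal does not actually prove it. The solvability over $\R$ (take imaginary parts of $\NZ \cdot Z = H + i\pi C$ at the complete structure, where $H=0$) and the step to $\Q$ (rank is field-independent and $\NZ, C$ are integral) are both correct. But the step from $\Q$ to $\Z$ is not established: you translate the claim into the existence of a combinatorial flattening of $\TT$, observe that a flattening would produce the desired $B$, and then defer ``the main technical hurdle'' of setting up the incidence counts and signs. This is not progress --- ``there exists an integer $B$ with $\NZ \cdot B = C$'' and ``there exists a combinatorial flattening'' are essentially the same statement; restating one as the other does not prove either. Moreover, the missing step is not a sign-chasing calculation. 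In Neumann's proof (Theorem 2.4 of [Neumann]), the existence of an integral flattening is obtained by a homological argument: he constructs a chain complex $\mathcal{J}$ encoding the flattening conditions, identifies the relevant obstruction with a class in an explicitly computed (co)homology group, and shows that class vanishes. That global vanishing is what forces a rational solution to snap to an integer one, and it cannot be bypassed by local consistency of labels. As written, your part (i) is a correct reduction plus an acknowledged gap, not a proof.
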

Neumann's result is more precise, incorporating a parity condition on $B$ not needed here. Additionally, we will not need part (ii) of the theorem until later, but we state it now.
Note that, by taking a subset of the rows, or equations, $\NZ \cdot B = C$ implies $\NZ^\flat \cdot B = C^\flat$.

In order to solve $\SY \cdot B = C^\#$, it remains to satisfy the equations \eqref{Eqn:RkGammaDotB}. As discussed above, we do this not by adjusting $B$, but by judicious choice of the vectors $R^\Gamma_k$, and hence the matrix $\SY$. Recall from \refsec{symplectic_top_properties_NZ} that there is substantial freedom in choosing the vectors $R^\Gamma_k$.
But first we deal with a technical condition on the triangulation, which we need for the argument.
Recall $c_k = \sum_{j=1}^n c_{k,j}$ (\refdef{NZConsts}), where $c_{k,j}$ is the number of $c$-edges of the tetrahedron $\Delta_j$ identified to edge $E_k$ (\refdef{abc_edges}). So $c_k$ is just the number of $c$-edges of tetrahedra identified to $E_k$.

\begin{lem}\label{Lem:GoodLabelling}
Any triangulation of $M$ has a labelling such that 
\begin{enumerate}
\item its Neumann-Zagier matrix $\NZ$ has rows $R^G_1, \ldots, R^G_{n-n_\mfc}$ linearly independent, and 
\item
there exists $k\in\{1, \ldots, n-n_\mfc\}$ with $c_k \neq 2$.
\end{enumerate}
\end{lem}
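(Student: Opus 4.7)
The plan is to split the proof into two parts. First, exhibit a labelling of $\TT$ in which some edge $E_{k'}$ has $c_{k'} \neq 2$. Second, permute the edge ordering so that $E_{k'}$ sits among the first $n-n_\mfc$ positions while the first $n-n_\mfc$ rows of $\NZ$ remain linearly independent. Throughout, I assume $n > n_\mfc$, since otherwise the index set $\{1, \ldots, n-n_\mfc\}$ in condition (ii) is empty and the conclusion has no content.

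For the first part, the key move is to change the oriented labelling of a single tetrahedron $\Delta_j$ by a cyclic rotation of its vertices fixing vertex $0$. Such a rotation lies in $A_4$, is orientation-preserving, and cyclically permutes the three pairs of opposite edges. Under the rotation $(1\,2\,3)$ one checks directly from the edge labels that the new $c$-edges of $\Delta_j$ are precisely its old $b$-edges, so the new value of $c_k$ at every edge becomes $c_k - c_{k,j} + b_{k,j}$; under the inverse rotation $(1\,3\,2)$, the new $c$-edges are the old $a$-edges and new $c_k = c_k - c_{k,j} + a_{k,j}$. I will argue by contradiction: suppose no such relabelling breaks the condition that every $c_k = 2$. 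Then for each $j$ and each $k$ we must have both $b_{k,j} = c_{k,j}$ and $a_{k,j} = c_{k,j}$, i.e.\ $a_{k,j} = b_{k,j} = c_{k,j}$. This forces $d_{k,j} = d'_{k,j} = 0$ for all $j, k$, so every row $R^G_k$ of $\NZ$ vanishes, contradicting \refthm{NeumannZagier}(iv), which says these rows have rank $n-n_\mfc \geq 1$. Hence some single-tetrahedron rotation yields a labelling with $c_{k'} \neq 2$ for some $k'$.

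In that labelling, the row $R^G_{k'}$ is automatically nonzero: if it were zero, the $k'$th logarithmic gluing equation would read $0 = i\pi(2 - c_{k'}) \neq 0$, inconsistent with the existence of the solution guaranteed by \reflem{LogEqns_NZ} (applied to any hyperbolic structure on $\TT$). The second part is then immediate linear algebra: extend $\{R^G_{k'}\}$ to a maximal linearly independent subset of $\{R^G_1, \ldots, R^G_n\}$, which by \refthm{NeumannZagier}(iv) has cardinality $n-n_\mfc$, and relabel the edges of $\TT$ so that these $n-n_\mfc$ rows become the first $n-n_\mfc$ rows of $\NZ$, with $E_{k'}$ among them.

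The main obstacle I anticipate is the bookkeeping in the rotation step: verifying carefully how the $a/b/c$-designations transform under the two nontrivial cyclic rotations fixing vertex $0$, and combining them to produce the sharp equalities $a_{k,j} = b_{k,j} = c_{k,j}$ that drive the contradiction. The remaining rearrangement is routine.
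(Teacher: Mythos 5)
Your proof is correct and essentially the same as the paper's: both change the oriented labelling of a single tetrahedron by a cyclic rotation so as to cyclically permute its $a$-, $b$-, $c$-edges, and both derive a contradiction with the Neumann--Zagier rank theorem (\refthm{NeumannZagier}(iv)) if no such rotation achieves $c_k \neq 2$. The one small variant is how the row stays nonzero after relabelling: the paper notes combinatorially that the triple $(a_{k,t},b_{k,t},c_{k,t})$ for the rotated tetrahedron remains not-all-equal, hence $d_{k,t}$ or $d'_{k,t}$ is nonzero, whereas you invoke consistency of the logarithmic gluing equation at a hyperbolic structure; both arguments succeed.
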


In other words, the conclusion of the lemma requires that some edge be incident to a number of $c$-edges other than $2$.
In fact, we will see that one can start from any labelled triangulation, and it suffices to relabel the vertices of at most one tetrahedron, and possibly reorder some edges. Moreover, we can choose any edge $E_k$ with nonzero $R^G_k$, and adjust so that this particular edge is incident to $c_k \neq 2$ $c$-edges.

The proof of \reflem{GoodLabelling} requires that $n>n_\mfc$. In fact, Adams and Sherman proved that $n\geq 2n_\mfc$ for any finite volume orientable hyperbolic 3-manifold with $n_\mfc$ cusps~\cite{AdamsSherman}.

\begin{proof}[Proof of \reflem{GoodLabelling}]
Take a labelled triangulation $\TT$ of $M$. 
Choose some $k \in \{1, \ldots, n \}$ such that $R^G_k$ is nonzero. (Such $k$ certainly exists since the $R^G_k$ span a space of rank $n - n_\mfc \geq 1$.)
We claim that if $c_k = 2$, then $\TT$ can be relabelled so that $c_k \neq 2$.

Let $\Delta_t$ be a tetrahedron of $\TT$. The relabellings of $\Delta_t$ have the effect of cyclically permuting the $a$-, $b$- and $c$-edges, 
hence cyclically permuting the triple $(a_{k,t}, b_{k,t}, c_{k,t})$; however other terms $c_{k,j}$ in the sum for $c_k$ are unchanged. Hence, if one of $a_{k,t}$ or $b_{k,t}$ is not equal to $c_{k,t}$, then a relabelling of $\Delta_t$ will change $c_k$ to a distinct value, not $2$, as desired. Otherwise, all relabellings of $\Delta_t$ leave $c_k = 2$, and we have $a_{k,t} = b_{k,t} = c_{k,t}$, so $d_{k,t} = d'_{k,t} = 0$ (\refdef{NZConsts}). 

The above argument applies to any tetrahedron $\Delta_t$ of $\TT$. Thus, if every relabelling of any single tetrahedron leaves $c_k = 2$, then the numbers $d_{k,t} = d'_{k,t} = 0$ for all $t \in \{1, \ldots, n\}$. But these are precisely the entries in the vector $R^G_k$ forming a row of $\NZ^\flat$, so $R^G_k = 0$, contradicting $R^G_k \neq 0$ above. This contradiction proves the claim. Moreover, after relabelling the tetrahedron, there still exists $t \in \{1, \ldots, n\}$ such that $a_{k,t}, b_{k,t}, c_{k,t}$ are not all equal, and hence $R^G_k$ is not zero.

Thus, there exists a relabelling of a single tetrahedron that makes $c_k \neq 2$, and $R^G_k$ remains nonzero. Call the resulting labelled triangulation $\TT'$ and Neumann-Zagier matrix $\NZ'$. 
Now by \refthm{NeumannZagier}(iv), the first $n$ row vectors of $\NZ'$ span an $(n-n_{\mfc})$-dimensional space. Hence we may relabel the edges so that the edges labelled $1, \ldots, n-n_\mfc$ have linearly independent row vectors, and our chosen edge is among them. This relabelling satisfies the lemma. 
\end{proof}

For a triangulation as in \reflem{GoodLabelling}, the nonzero entry of $C^\flat$ provides the leverage to make a choice of vectors $R^\Gamma_k$ so that they satisfy \eqref{Eqn:RkGammaDotB}.
\begin{lem}\label{Lem:AnyNeumannVectorWorks}
Suppose that $\TT$ is labelled to satisfy \reflem{GoodLabelling}. Let $B \in \Z^{2n}$ be a vector satisfying $\NZ^\flat \cdot B = C^\flat$. Then there exist vectors $R^\Gamma_1, \dots, R^\Gamma_{n-n_\mfc}$ in $\Q^{2n}$ such that 
\begin{enumerate}
\item $(R^\Gamma_1, R^G_1, \ldots, R^\Gamma_{n-n_\mfc}, R^G_{n-n_\mfc}, R_1^\mfm, \half R_1^\mfl, \cdots, R_{n_\mfc}^\mfm, \half R_{n_\mfc}^\mfl)$ forms a symplectic basis, and
\item for all $j \in \{1, \ldots, n-n_\mfc\}$ we have $R^\Gamma_j \cdot B = 0$.
\end{enumerate}
\end{lem}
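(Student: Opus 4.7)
The plan is: first pick any symplectic basis extension, then adjust the $R^\Gamma_j$'s by an $R^G$-combination (the only modification that preserves the symplectic conditions) so that the desired dot products with $B$ vanish. The extra freedom in the remark after \reflem{symplectomorphisms_fixing_fs} is exactly a symmetric matrix's worth of data, and the hypothesis $c_k \neq 2$ is precisely what makes the resulting linear system solvable.

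\textbf{Step 1: Pick any extension.} As discussed in \refsec{symplectic_top_properties_NZ}, since the vectors $R^G_1, \ldots, R^G_{n-n_\mfc}, R^\mfm_1, \half R^\mfl_1, \ldots, R^\mfm_{n_\mfc}, \half R^\mfl_{n_\mfc}$ have pairwise $\omega$-values matching those of a piece of a standard symplectic basis, we may extend them to a full symplectic basis by rationally solving linear equations; this yields vectors $R^\Gamma_1, \ldots, R^\Gamma_{n-n_\mfc} \in \Q^{2n}$ satisfying condition (i).

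\textbf{Step 2: Classify modifications preserving the symplectic basis.} Suppose we replace each $R^\Gamma_j$ by some $\tilde R^\Gamma_j = R^\Gamma_j + \sum_i \alpha_{j,i} R^\Gamma_i + \sum_i s_{j,i} R^G_i + \sum_i p_{j,i} R^\mfm_i + \sum_i q_{j,i} \tfrac{1}{2} R^\mfl_i$. Imposing $\omega(\tilde R^\Gamma_j, R^G_\ell) = \delta_{j\ell}$, $\omega(\tilde R^\Gamma_j, R^\mfm_\ell) = 0$, and $\omega(\tilde R^\Gamma_j, \tfrac12 R^\mfl_\ell) = 0$ forces $\alpha_{j,i} = p_{j,i} = q_{j,i} = 0$. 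The remaining condition $\omega(\tilde R^\Gamma_j, \tilde R^\Gamma_k) = 0$ then reduces to $s_{j,k} = s_{k,j}$. So valid modifications are precisely $\tilde R^\Gamma_j = R^\Gamma_j + \sum_k s_{j,k} R^G_k$ with $(s_{j,k})$ a symmetric $(n-n_\mfc) \times (n-n_\mfc)$ matrix over $\Q$.

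\textbf{Step 3: Set up the equation for the $s_{j,k}$.} Using the hypothesis $\NZ^\flat \cdot B = C^\flat$, the $k$-th gluing row gives $R^G_k \cdot B = 2 - c_k$. Therefore
\[
\tilde R^\Gamma_j \cdot B = R^\Gamma_j \cdot B + \sum_{k=1}^{n-n_\mfc} s_{j,k}(2 - c_k).
\]
Writing $v_j := -R^\Gamma_j \cdot B \in \Q$ and $w_k := 2 - c_k \in \Z$, condition (ii) is equivalent to finding a symmetric rational matrix $s = (s_{j,k})$ with $s w = v$.

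\textbf{Step 4: Solve using the hypothesis $c_{k_0} \neq 2$.} By \reflem{GoodLabelling}, there exists $k_0 \in \{1, \ldots, n-n_\mfc\}$ with $w_{k_0} \neq 0$. A rank-one-plus-diagonal choice works: set $s_{j,k_0} = s_{k_0,j} = v_j / w_{k_0}$ for $j \neq k_0$, set all other off-diagonal entries to zero, and define
\[
s_{k_0,k_0} = \frac{1}{w_{k_0}}\left( v_{k_0} - \sum_{k \neq k_0} \frac{v_k w_k}{w_{k_0}} \right).
\]
For $j \neq k_0$ the $j$-th row of $sw = v$ reduces to $s_{j,k_0} w_{k_0} = v_j$, which holds by construction; the $k_0$-th row holds by the definition of $s_{k_0,k_0}$. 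The matrix is manifestly symmetric and rational, so the resulting $\tilde R^\Gamma_j$ are in $\Q^{2n}$ and satisfy both (i) and (ii).

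The main obstacle is Step 2 — verifying that the space of allowed modifications is exactly parametrised by symmetric matrices — which is why we needed to set up the symplectic conditions carefully. Once that is done, Step 4 is a short linear algebra exercise made possible precisely by the nonzero entry of $C^\flat$ guaranteed by the previous lemma.
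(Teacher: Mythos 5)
Your proof is correct and follows essentially the same strategy as the paper's: extend to a symplectic basis arbitrarily, parametrize the allowed adjustments of the $R^\Gamma_j$ by symmetric matrices of $R^G$-coefficients, and use the nonvanishing of $w_{k_0} = 2 - c_{k_0}$ supplied by \reflem{GoodLabelling} to solve the resulting linear system $sw = v$. One small omission to fix: you should state that the diagonal entries $s_{j,j}$ for $j \neq k_0$ are set to zero (you only mention the off-diagonal ones); with that, your one-shot construction of $s$ plays the role of the paper's iterated application of the elementary symplectomorphisms from \reflem{symplectomorphisms_fixing_fs}.
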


\begin{proof}
We start from arbitrary choices of the $R^\Gamma_k \in \Q^{2n}$ such that
\[
\left( R^\Gamma_1, R^G_1, \ldots, R^\Gamma_{n-n_\mfc}, R^G_{n-n_\mfc}, R_1^\mfm, \half R_1^\mfl, \ldots, R_{n_\mfc}^\mfm, \half R_{n_\mfc}^\mfl \right)
\]
is a symplectic basis.

\reflem{symplectomorphisms_fixing_fs} allows us to adjust the $R^\Gamma_k$, without changing any $R^G_k$, $R_j^\mfm$ or $R_j^\mfl$, so that we still have a symplectic basis. In particular, we may make the following modifications:
\begin{enumerate}
\item
For $j\neq k \in \{1, \ldots, n-n_\mfc\}$, and $a \in \R$, map $R^\Gamma_j \mapsto R^\Gamma_j + a R^G_k$, $R^\Gamma_k \mapsto R^\Gamma_k + a R^G_j$.
\item
Take $j \in \{1, \ldots, n-n_\mfc \}$ and $a \in \R$, and map $R^\Gamma_j \mapsto R^\Gamma_j + a R^G_j$.
\end{enumerate}

Let $R_j^\Gamma \cdot B = a_j$. We will adjust the $R_j^\Gamma$ until all $a_j = 0$.

We claim there exists a $k \in \{1, \ldots, n-n_\mfc \}$ such that $R^G_k \cdot B \neq 0$. Indeed, as $\TT$ satisfies \reflem{GoodLabelling}, there exists a $k \in \{1, \ldots, n-n_\mfc \}$ such that $c_k \neq 2$. Then the $k$th row of the equation $\NZ^\flat \cdot B = C^\flat$ says that $\alpha := R^G_k \cdot B = 2 - c_k$, which is nonzero as claimed. 

First, modify $R^\Gamma_k$ by (ii), replacing $R_k^\Gamma$ with $(R_k^\Gamma)' = R_k^\Gamma - \frac{a_k}{\alpha} R_k^G$. Then $(R_k^\Gamma)' \cdot B = R_k^\Gamma \cdot B - \frac{a_k}{\alpha} R_k^G \cdot B = 0$. Thus the modification makes $a_k = 0$; the other $a_j$ are unchanged. 

Now consider $j \neq k$. If $R_j^G \cdot B \neq 0$, modify $R^\Gamma_j$ by (ii) to set $a_j = 0$. Otherwise, $R_j^G \cdot B = 0$ and modify $R_j^\Gamma$ and $R_k^\Gamma$ by (i), replacing them with $(R_j^\Gamma)' = R_j^\Gamma - \frac{a_j}{\alpha} R_k^G$ and $(R_k^\Gamma)' = R_k^\Gamma - \frac{a_j}{\alpha} R_j^G$ respectively. Then $(R_j^\Gamma)' \cdot B = R_j^\Gamma \cdot B - \frac{a_j}{\alpha} R_k^G \cdot B = 0$ and $(R_k^\Gamma)' \cdot B = R_k^\Gamma \cdot B - \frac{a_j}{\alpha} R_j^G \cdot B = a_k = 0$. Again the effect is to set $a_j = 0$ and leave the other $a_i$ unchanged. 

Modifying $R^\Gamma_j$ in this way for each $j \neq k$, we obtain the desired vectors.
\end{proof}

We summarise the result of this section in the following proposition.
\begin{prop}
\label{Prop:ZConvert}
Let $\TT$ be a hyperbolic triangulation labelled to satisfy \reflem{GoodLabelling}. Let $B$ be an integer vector such that $\NZ^\flat \cdot B = C^\flat$ (such a vector exists by \refthm{NeumannSolution}).
Then there exists a symplectic matrix $\SY$ defining variables $\Gamma$, such that the logarithmic gluing and cusp equations are equivalent to the equation
\begin{equation}
  \label{Eqn:CuspGluingNZVars}
  \pushQED{\qed}
  Z = (-J)(\NZ^\flat)^T \overline{\Gamma} + \pi iB. \qedhere
  \popQED
\end{equation}
\end{prop}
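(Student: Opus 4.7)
The plan is to assemble the pieces already established: use \reflem{GoodLabelling} and \refthm{NeumannSolution} to guarantee an integer $B$ with $\NZ^\flat \cdot B = C^\flat$; then use \reflem{AnyNeumannVectorWorks} to upgrade the rows $R^G_k, R^\mfm_k, R^\mfl_k$ of $\NZ^\flat$ (together with appropriate factors of $\half$) to a full symplectic basis by choosing auxiliary vectors $R^\Gamma_1, \ldots, R^\Gamma_{n-n_\mfc} \in \Q^{2n}$ that additionally satisfy $R^\Gamma_j \cdot B = 0$. This defines the symplectic matrix $\SY$ as in \eqref{Eqn:SYDefn}, and hence the $\Gamma$ variables via \refdef{Gamma_variables}. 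With these choices in hand, applying \refprop{ChangeOfVars} gives the logarithmic gluing and cusp equations in the form
\[
Z = (-J)(\NZ^\flat)^T \overline{\Gamma} + \pi i \, \SY^{-1} C^\#,
\]
so the whole proof reduces to verifying that $\SY^{-1} C^\# = B$, equivalently $\SY \cdot B = C^\#$.

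The next step is the direct row-by-row check of $\SY \cdot B = C^\#$. The rows of $\SY$ are $R^\Gamma_k$, $R^G_k$ (for $k = 1, \ldots, n-n_\mfc$) followed by $R^\mfm_k, \half R^\mfl_k$ (for $k = 1, \ldots, n_\mfc$). By construction of the $R^\Gamma_k$ in \reflem{AnyNeumannVectorWorks}, $R^\Gamma_k \cdot B = 0$. From $\NZ^\flat \cdot B = C^\flat$ we read off $R^G_k \cdot B = 2 - c_k$, $R^\mfm_k \cdot B = -c^\mfm_k$, and $R^\mfl_k \cdot B = -c^\mfl_k$, hence $\half R^\mfl_k \cdot B = -\half c^\mfl_k$. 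Matching these entries against the definition
\[
C^\# = \left[ 0,\, 2-c_1,\, \ldots,\, 0,\, 2 - c_{n-n_\mfc},\, -c^\mfm_1,\, -\tfrac{1}{2} c^\mfl_1,\, \ldots,\, -c^\mfm_{n_\mfc},\, -\tfrac{1}{2} c^\mfl_{n_\mfc} \right]^T
\]
gives precisely $\SY \cdot B = C^\#$, as required.

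The only subtlety to be careful about is making sure \reflem{AnyNeumannVectorWorks} is applicable: it requires the labelling hypothesis of \reflem{GoodLabelling}, which is included in the proposition's hypotheses, and it requires starting from \emph{some} symplectic basis extension, whose existence was established in \refsec{symplectic_top_properties_NZ} via the standard fact that any isotropic family with the right $\omega$-values extends to a symplectic basis of $\R^{2n}$. Once \reflem{AnyNeumannVectorWorks} delivers $R^\Gamma_k$ with $R^\Gamma_k \cdot B = 0$, the verification above is purely bookkeeping, and combining with \refprop{ChangeOfVars} yields \eqref{Eqn:CuspGluingNZVars}. I do not anticipate any serious obstacle here; the main work has already been done in \reflem{AnyNeumannVectorWorks}, and the proof of the proposition is essentially an assembly of previously proved results.
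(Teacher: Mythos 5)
Your proof is correct and follows essentially the same route as the paper: you first invoke \reflem{AnyNeumannVectorWorks} (available because of the \reflem{GoodLabelling} hypothesis) to choose the auxiliary vectors $R^\Gamma_k$ with $R^\Gamma_k \cdot B = 0$, forming $\SY$; you then feed this into \refprop{ChangeOfVars} and reduce the claim to $\SY \cdot B = C^\#$, which you verify row by row exactly as the paper does in equations \eqref{Eqn:RkGammaDotB}--\eqref{Eqn:CuspSignEqns} and \eqref{Eqn:SignEquation}. The paper states the proposition as a summary of the preceding discussion with no displayed proof, and your write-up is a faithful assembly of the same ingredients.
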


We have now realised our claim of ``inverting without inverting". Proposition~\ref{Prop:ZConvert} allows us to convert the variables $Z_i, Z'_i$ into the variables $\Gamma_i$, together with the cusp holonomies $\ell_i, m_i$, without having to actually calculate the vectors $R_i^\Gamma$ or the matrix $\SY$! The only information we need is the Neumann-Zagier matrix $\NZ$, and the integer vector $B$ such that $\NZ^\flat \cdot B = C^\flat$.

\subsection{The A-polynomial from gluing equations and from Ptolemy equations}

Suppose that $n_\mfc = 1$, we have a labelled triangulation $\TT$ satisfying \reflem{GoodLabelling}, and a vector $B = (B_1, B'_1, \ldots, B_n, B'_n)^T$ such that $\NZ^\flat \cdot B = C^\flat$. 

Proposition~\ref{Prop:ZConvert} converts the logarithmic gluing and cusp equations --- linear equations --- into the variables $\Gamma_1, \ldots, \Gamma_{n-1}$, together with the cusp holonomies $m,\ell$. We now convert the nonlinear equations \eqref{Eqn:NotProd} into these variables.

We first convert to the exponentiated variables $z_j$. Let $\gamma_j = e^{\Gamma_j}$. 
Using equation \eqref{Eqn:CuspGluingNZVars}, and the known form of $(-J)(\NZ^\flat)^T$ from \eqref{Eqn:MinusJNZFlatT}, we obtain
\begin{equation}
\label{Eqn:zjInTermsOfGammalm}
z_j = 
\left( -1 \right)^{B_j} \ell^{-\mu_j'/2} m^{\lambda_j'/2} \prod_{k=1}^{n-1} \gamma_k^{d_{k,j}'}, \quad\quad
z_j' = 
\left( -1 \right)^{B_j'} \ell^{\mu_j/2} m^{-\lambda_j/2} \prod_{k=1}^{n-1} \gamma_k^{-d_{k,j}}.
\end{equation}

Then the nonlinear equation \eqref{Eqn:NotProd} for the tetrahedron $\Delta_j$ becomes
\[
\left( -1 \right)^{B_j} \ell^{-\mu_j'/2} m^{\lambda_j'/2} \prod_{k=1}^{n-1} \gamma_k^{d_{k,j}'}
+
\left( -1 \right)^{B_j'} \ell^{-\mu_j/2} m^{\lambda_j/2} \prod_{k=1}^{n-1} \gamma_k^{d_{k,j}}
- 1
= 0.
\]
Since $d_{k,j} = a_{k,j} - c_{k,j}$ and $d'_{k,j} = b_{k,j} - c_{k,j}$ (\refdef{NZConsts}), we may multiply through by $\gamma^{c_{k,j}}$; then the exponents become the incidence numbers $a_{k,j}, b_{k,j}, c_{k,j}$ of the various types of edges of tetrahedra with edges of the triangulation (\refdef{abc_edges}).
\begin{equation}
\label{Eqn:NotProd_in_gammas}
\left( -1 \right)^{B_j} \ell^{-\mu_j'/2} m^{\lambda_j'/2} \prod_{k=1}^{n-1} \gamma_k^{b_{k,j}}
+
\left( -1 \right)^{B_j'} \ell^{-\mu_j/2} m^{\lambda_j/2} \prod_{k=1}^{n-1} \gamma_k^{a_{k,j}}
- 
\prod_{k=1}^{n-1} \gamma_k^{c_{k,j}}
= 0.
\end{equation}
Each product in the above expression is simpler than it looks: it is a polynomial of total degree at most 2 in the $\gamma_k$, by \reflem{SumToTwo}! The product $\prod_{k=1}^{n-1} \gamma_k^{a_{k,j}}$ has $j$ fixed, referring to the tetrahedron $\Delta_j$. The product is over the various edges $E_k$ of the triangulation; the exponent $a_{k,j}$ is the incidence number of the $a$-edges of $\Delta_j$ with the edge $E_k$. But $\Delta_j$ only has two $a$-edges, so at most two $a_{k,j}$ are nonzero, and the $a_{k,j}$ sum to 2 as in \eqref{Eqn:sum_of_abcs}.

Recall the notation $j(\mu \nu)$ of \refdef{jmunu}. For fixed $j$, the only nonzero $a_{k,j}$ are $a_{j(01),j}$ and $a_{j(23),j}$ (which may be the same term). Thus the product $\prod_{k=1}^{n-1} \gamma_k^{a_{k,j}}$ is equal to the product of $\gamma_{j(01)}$ and $\gamma_{j(23)}$, with the caveat that $\gamma_n$ does not appear in the product. Indeed, in \refdef{Gamma_variables} we only define $\Gamma_1, \ldots, \Gamma_{n-1}$, so only $\gamma_1, \ldots, \gamma_{n-1}$ are defined. However, it is worthwhile to introduce $\gamma_n$ as a formal variable.
\begin{defn}
\label{Def:PtolemyEqn}
Let $\TT$ be a labelled triangulation of a 3-manifold with one cusp, and let $B$ be an integer vector such that $\NZ^\flat \cdot B = C^\flat$. The \emph{Ptolemy equation} of the tetrahedron $\Delta_j$ is
\[
\left( -1 \right)^{B'_j} \ell^{-\mu_j/2} m^{\lambda_j/2} \gamma_{j(01)} \gamma_{j(23)}
+
\left( - 1 \right)^{B_j} \ell^{-\mu'_j/2} m^{\lambda'_j/2} \gamma_{j(02)} \gamma_{j(13)}
-
\gamma_{j(03)} \gamma_{j(12)}
= 0.
\]
The \emph{Ptolemy equations} of $\TT$ consist of Ptolemy equations for each tetrahedron of $\TT$.
\end{defn}
Equation \eqref{Eqn:NotProd_in_gammas} is the Ptolemy equation for $\Delta_j$, with the formal variable $\gamma_n$ set to $1$.

Let us now put the work of this section together.

\begin{thm}
\label{Thm:Ptolemy_Apoly}
Let $\TT$ be a hyperbolic triangulation of a one-cusped $M$, labelled to satisfy \reflem{GoodLabelling}. When we solve the system of Ptolemy equations of $\TT$ in terms of $m$ and $\ell$, setting $\gamma_n = 1$ and eliminating the variables $\gamma_1, \ldots, \gamma_{n-1}$, we obtain a factor of the $\PSL(2,\C)$ A-polynomial, which is also the polynomial of \refthm{Champ}.
\end{thm}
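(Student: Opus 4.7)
The plan is to show that the Ptolemy system is equivalent, as a system to be eliminated, to Champanerkar's system of gluing/cusp equations plus the nonlinear relations \eqref{Eqn:NotProd}, so that the same factor of the PSL$(2,\C)$ A-polynomial emerges. The work done in the section reduces this to a routine translation: the remaining content is to verify that the passage from $(z_j, z_j', \ell, m)$ to $(\gamma_1, \ldots, \gamma_n, \ell, m)$ (with $\gamma_n=1$) is a genuine change of variables on the relevant variety, not merely a formal manipulation.

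First I would invoke \refprop{ZConvert}: since $\TT$ satisfies \reflem{GoodLabelling} and a vector $B$ with $\NZ^\flat \cdot B = C^\flat$ exists by \refthm{NeumannSolution}, the logarithmic gluing and cusp equations become \eqref{Eqn:CuspGluingNZVars}. Exponentiating yields \eqref{Eqn:zjInTermsOfGammalm}, which expresses each $z_j$ and $z_j'$ as a Laurent monomial in $\gamma_1, \ldots, \gamma_{n-1}, \ell^{1/2}, m^{1/2}$ with a sign determined by $B$. Substituting these expressions into the nonlinear tetrahedron equations \eqref{Eqn:NotProd} and clearing denominators via \reflem{SumToTwo} (i.e.\ multiplying through by $\prod_k \gamma_k^{c_{k,j}}$) produces exactly the Ptolemy equation of $\Delta_j$ with $\gamma_n$ set to $1$, as in \eqref{Eqn:NotProd_in_gammas}. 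This gives one direction: every solution of the gluing, cusp, and nonlinear tetrahedron equations yields a solution of the Ptolemy system with $\gamma_n=1$.

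Next I would run the argument in reverse. Given a solution of the Ptolemy system with $\gamma_n=1$, define $z_j$ and $z_j'$ by \eqref{Eqn:zjInTermsOfGammalm}; these are well-defined since all the $\gamma_k$ occurring are among $\gamma_1, \ldots, \gamma_n$ and $\gamma_n=1$. By construction (reversing the clearing-of-denominators step) these $z_j, z_j'$ satisfy \eqref{Eqn:NotProd}, and \eqref{Eqn:CuspGluingNZVars} holds tautologically, so by \refprop{ZConvert} the logarithmic, and hence original, gluing and cusp equations are satisfied. Thus the solution variety of Champanerkar's system in $(z_j, z_j', \ell, m)$, modulo the choice of branches made in defining logarithms, maps onto the solution variety of the Ptolemy system with $\gamma_n=1$ in $(\gamma_1, \ldots, \gamma_{n-1}, \ell, m)$; conversely every Ptolemy solution lifts. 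Since the images of these two varieties under projection to the $(\ell, m)$-plane coincide, their defining polynomials (up to units) agree, so the elimination described in the statement produces the same factor of the PSL$(2,\C)$ A-polynomial that Champanerkar's method does, namely the one supplied by \refthm{Champ}.

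The main obstacle I expect is the projective subtlety flagged in \refrefmark{Rmk:Projective}: the Ptolemy equations are homogeneous in the $\gamma_k$, so setting $\gamma_n=1$ is a choice of affine chart rather than an additional equation, and the $n$ Ptolemy equations are genuinely independent only once one accounts for the fact that they encode $n$ copies of the single-tetrahedron relation \eqref{Eqn:NotProd} after the change of variables. I would handle this by noting that $-J(\NZ^\flat)^T$ has columns indexed by the $(n-1)$ edge variables and the two cusp variables, so the formal variable $\gamma_n$ never appears as an exponent in \eqref{Eqn:zjInTermsOfGammalm}; introducing $\gamma_n$ only serves to homogenise the equations after clearing denominators, and setting $\gamma_n=1$ exactly recovers the affine form \eqref{Eqn:NotProd_in_gammas}. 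A secondary issue is ensuring that the sign choices made by $B$ via \refprop{ZConvert} propagate correctly through exponentiation to give the stated signs $(-1)^{B_j}$ and $(-1)^{B_j'}$; this is immediate once one records the rows of $(-J)(\NZ^\flat)^T$ in \eqref{Eqn:MinusJNZFlatT} and matches them against the $\mu,\mu',\lambda,\lambda',d,d'$ entries as in \eqref{Eqn:zjInTermsOfGammalm}.
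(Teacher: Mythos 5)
Your proposal is correct and follows essentially the same route as the paper's proof: invoke \refprop{ZConvert} to obtain \eqref{Eqn:CuspGluingNZVars}, exponentiate to get \eqref{Eqn:zjInTermsOfGammalm}, substitute into \eqref{Eqn:NotProd} and clear denominators to arrive at \eqref{Eqn:NotProd_in_gammas}, then run the equivalence both ways to conclude that the $(\ell,m)$ projections coincide with those of Champanerkar's system. The extra care you devote to the projective/affine-chart subtlety and to tracking the signs through $(-J)(\NZ^\flat)^T$ is consistent with the paper's \refprop{ZConvert} and Remark~\ref{Rmk:Projective}, and adds nothing incompatible.
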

(Note that the polynomial described here, arising by eliminating variables from a system of equations, is only defined up to multiplication by units, and the equality of polynomials here should be interpreted accordingly.)

\begin{proof}[Proof of \refthm{Ptolemy_Apoly}]
Theorem~\ref{Thm:Champ} tells us that solving equations
\eqref{Eqn:Z''Eqn}--\eqref{Eqn:NotProd}, \eqref{Eqn:Gluing} and \eqref{Eqn:CuspEqn} for $m$ and $\ell$, eliminating the variables $z_j, z'_j, z''_j$, yields a factor of the $\PSL(2,\C)$ A-polynomial. By \reflem{LogEqns_NZ}, a solution of the logarithmic gluing and cusp equations, after exponentiation, gives a solution of \eqref{Eqn:Z''Eqn}, \eqref{Eqn:Gluing} and \eqref{Eqn:CuspEqn}; and conversely any solution of \eqref{Eqn:Z''Eqn}, \eqref{Eqn:Gluing} and \eqref{Eqn:CuspEqn} has a logarithm solving the logarithmic gluing and cusp equations.

By \refprop{ZConvert}, after introducing appropriate $B$ and $\SY$ and variables $\Gamma$, which all exist, the logarithmic gluing and cusp equations are equivalent to \eqref{Eqn:CuspGluingNZVars}. Exponentiating gives us that the equations \eqref{Eqn:zjInTermsOfGammalm} imply \eqref{Eqn:Z''Eqn}, \eqref{Eqn:Gluing} and \eqref{Eqn:CuspEqn}. Combining these with \eqref{Eqn:NotProd} yields the equations \eqref{Eqn:NotProd_in_gammas}, one for each tetrahedron. Therefore, any solution of the equations \eqref{Eqn:NotProd_in_gammas} for $\gamma_1, \ldots, \gamma_{n-1}, m, \ell$ yields a solution  of \eqref{Eqn:Z''Eqn}--\eqref{Eqn:NotProd}, \eqref{Eqn:Gluing} and \eqref{Eqn:CuspEqn}.  
Conversely, any solution of \eqref{Eqn:Z''Eqn}--\eqref{Eqn:NotProd}, \eqref{Eqn:Gluing} and \eqref{Eqn:CuspEqn} has a logarithm satisfying the logarithmic gluing and cusp equations, hence yields solutions of \eqref{Eqn:NotProd_in_gammas}.

Thus the pairs $(\ell, m)$ arising in solutions of (\eqref{Eqn:Z''Eqn}--\eqref{Eqn:NotProd}, \eqref{Eqn:Gluing} and \eqref{Eqn:CuspEqn}) are those arising in solutions of \eqref{Eqn:NotProd_in_gammas}. The latter equations  are the Ptolemy equations of $\TT$ with $\gamma_n$ set to $1$.
Thus, the $(\ell, m)$ satisfying the polynomial obtained by solving the Ptolemy equations with $\gamma_n = 1$ are also those satisfying the polynomial of \refthm{Champ}.
\end{proof}

\begin{corollary}
\label{Cor:Ptolemy_SL2C_Apoly}
With $\TT$ and $M$ as above, let $A_0(\calL,\calM)$ denote the factor of the $\SL(2,\C)$ A-polynomial describing hyperbolic structures on $\TT$.
Letting $\mathcal{L} = \ell^{1/2}$ and $\mathcal{M} = m^{1/2}$ and solving the Ptolemy equations with $\gamma_n = 1$ as above, we obtain a polynomial in $\mathcal{M}$ and $\mathcal{L}$ which contains a factor either $A_0(\calL,\calM)$ or $A_0(-\calL, \calM)$. 
\end{corollary}

\begin{proof}
Suppose $(\mathcal{L},\mathcal{M})$ lies in the zero set of the factor of the $\SL(2,\C)$ A-polynomial describing hyperbolic structures on $\TT$. Then there is a representation $\pi_1(M) \To SL(2,\C)$ sending the longitude to a matrix with eigenvalues $\mathcal{L},\mathcal{L}^{-1}$ and the meridian to a matrix with eigenvalues $\mathcal{M},\mathcal{M}^{-1}$. Projecting to $\PSL(2,\C)$ we have the holonomy of a hyperbolic structure on $\TT$ whose cusp holonomies are given by $\mathcal{L}^2 = \ell$ and $\mathcal{M}^2 = m$ respectively. Hence $(\ell, m)$ and the tetrahedron parameters of the hyperbolic structure solve the gluing and cusp equations $\TT$, hence satisfy the polynomial of \refthm{Ptolemy_Apoly}.
\end{proof}

%%%%%%%%%%%%%%%%%%%%%%%%%%%%%%%%%%%%%%%%%%%%%%%%%%%%%%%%%%%%%%%%%
\section{Dehn fillings and triangulations}\label{Sec:Dehn}

%%%%%%%%%%%%%%%%%%%%%%%%%%%%%%%%%%%%%%%%%%%%%%%%%%%%%%%%%%%%%%%%%
\subsection{Layered solid tori}\label{Sec:LST}

Suppose we have a triangulation where a cusp $\mfc_1$ meets exactly two tetrahedra $\Delta^\mfc_1$ and $\Delta^\mfc_2$ in exactly one ideal vertex per tetrahedron. (We show in Appendix~A, \refprop{NiceTriangulation}, that such a triangulation can be constructed for quite general manifolds with two or more cusps.)
These two tetrahedra together give a triangulation of a manifold homeomorphic to $T^2\times [0,\infty)$ with a single point removed from $T^2\times\{0\}$. The boundary component $T^2\times\{0\}$ of $\Delta^\mfc_1\cup\Delta^\mfc_2$ is a punctured torus, triangulated by the two ideal triangles of $\bdy \Delta^\mfc_1$ and $\bdy\Delta^\mfc_2$ that do not meet the cusp $\mfc_1$.
We will remove $\Delta^\mfc_1 \cup \Delta^\mfc_2$ from our triangulated manifold, and obtain a space with boundary a punctured torus, triangulated by the same two ideal triangles. We will then replace $\Delta^\mfc_1\cup\Delta^\mfc_2$ by a solid torus with a triangulation such that the boundary is a triangulated once-punctured torus. This will give a triangulation of the Dehn filling.

A \emph{layered solid torus} is a triangulation of a solid torus, first described by Jaco and Rubinstein~\cite{JacoRubinstein:LST}; see also \cite{GueritaudSchleimer}. When working with ideal triangulations, as in our situation, the boundary of a layered solid torus consists of two ideal triangles whose union is a triangulation of a punctured torus. The space of all two-triangle triangulations of punctured tori is described by the Farey graph. 
A layered solid torus can be built using the combinatorics of the Farey graph.

Recall first the construction of the Farey triangulation of $\HH^2$. We view $\HH^2$ in the disc model, with antipodal points $1/0$ and $0/1$ in $\bdy\HH^2$ lying on a horizontal line through the centre of the disc, and $1/1$ at the north pole, $-1/1$ at the south pole. Two points $a/b$ and $c/d$ in $\Q \cup \{\infty\}\subset\bdy\HH^2$ have distance measured by
\[ \iota (a/b, c/d) = |ad - bc|. \]
Here $\iota (\cdot, \cdot)$ denotes geometric intersection number of slopes on a punctured torus. 
We draw an ideal geodesic between each pair $a/b$, $c/d$ with $|ad-bc|=1$. This gives the \emph{Farey triangulation}. The dual graph of the Farey triangulation is an infinite trivalent tree, which we denote by $\F$.

Any triangulation of a once-punctured torus consists of three slopes on the boundary of the torus, with each pair of slopes having geometric intersection number $1$. Denote the slopes by $f$, $g$, $h$. This triple determines a triangle in the Farey triangulation. Moving across an edge $(f,g)$ of the Farey triangulation, we arrive at another triangle whose vertices include $f$ and $g$; but the slope $h$ is replaced with some other slope $h'$. This corresponds to changing the triangulation on the punctured torus, replacing lines of slope $h$ with lines of slope $h'$.

When we wish to perform a Dehn filling by attaching a solid torus to a triangulated once-punctured torus, there are four important slopes involved. Three of the slopes are the slopes of the initial triangulation of the once-punctured solid torus. For example, these might be $0/1$, $1/0$, and $1/1$. We will typically denote the slopes by $(f,g,h)$. These determine an initial triangle $T_0$ in the Farey graph. The other important slope is $r$, the slope of the Dehn filling.

Now consider the geodesic in $\HH^2$ from the centre of 
$T_0$
to the slope $r \subset \bdy \HH^2$. This geodesic passes through a sequence of distinct triangles in the Farey graph, which we denote $T_0, T_1, \ldots, T_{N+1}$. Each $T_{j+1}$ is adjacent to $T_j$. We regard this as a walk or voyage through the triangulation; more precisely, we can regard $T_0, \ldots, T_N$ as forming an oriented path in the dual tree $\F$ without backtracking.
The slope $r$ appears as a vertex of the final triangle $T_{N+1}$, but not in any earlier triangle.

We build the layered solid torus by stacking tetrahedra $\Delta_0, \Delta_1, \ldots$ onto the punctured torus, replacing one set of slopes $T_0$ with another $T_1$, then another $T_2$, and so on. That is, two consecutive punctured tori always have two slopes in common and two that differ by a diagonal exchange. The diagonal exchange is obtained in three-dimensions by layering a tetrahedron onto a given punctured torus such that the diagonal on one side matches the diagonal to be replaced. See \reffig{LST}.

\begin{figure}
\begin{center}
  \includegraphics{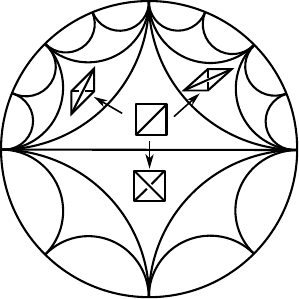}
  \caption{Constructing a layered solid torus}
  \label{Fig:LST}
\end{center}
\end{figure}

For each edge crossed in the path from $T_0$ to $T_N$, layer on a tetrahedron, obtaining a collection of tetrahedra homotopy equivalent to $T^2\times I$. After gluing $k$ tetrahedra $\Delta_0, \ldots, \Delta_{k-1}$, the side $T^2\times\{0\}$ has the triangulation whose slopes are given by $T_0$, and the side $T^2\times\{1\}$ has slopes given by $T_k$.
Two of the faces of $\Delta_{k-1}$ are glued to triangles of the previous layer, with slopes given by $T_{k-1}$, and the other two faces form a triangulation of the ``top" boundary $T^2 \times \{1\}$; this triangulation has slopes given by $T_k$.
Continue until $k=N$, obtaining a triangulated complex consisting of $N$ tetrahedra $\Delta_0, \ldots, \Delta_{N-1}$, with boundary consisting of two once-punctured tori, one triangulated by $T_0$ and the other by $T_N$.

Recall we are trying to obtain a triangulation of a solid torus for which the slope $r$ is homotopically trivial. Note that $r$ is a diagonal of the triangulation $T_{N}$. That is, a single diagonal exchange replaces the triangulation $T_N$ with $T_{N+1}$; and $T_{N+1}$ is a triangulation consisting of two slopes $s$ and $t$ in common with $T_N$, together with the slope $r$, which cuts across a slope $r'$ of $T_N$. To homotopically kill the slope $r$, fold the two triangles of $T_N$ across the diagonal slope $r'$, as in \reffig{FoldTriangles}. Gluing the two triangles on one boundary component of $T^2\times I$ in this manner gives a quotient that is homeomorphic to a solid torus, with boundary still triangulated by $T_0$. Inside, the slopes $s$ and $t$ are identified. The slope $r$ has been folded onto itself, meaning it is now homotopically trivial. Note that $N$ is the number of ideal tetrahedra in the layered solid torus. 

\begin{figure}
  \import{figures/}{TriangleFolding.pdf_tex}
  \caption{Folding makes the diagonal slope $r$ homotopically trivial.}
  \label{Fig:FoldTriangles}
\end{figure}

There are two exceptional cases. If $N=0$ then no tetrahedra are layered to form a layered solid torus. Instead, we fold across existing faces to homotopically ``kill'' the slope $r$ that lies in one of the three Farey triangles adjacent to $(f,g,h)$. This can be considered as attaching a degenerate layered solid torus, consisting of a single face, folded into a M\"obius band. 

There is one other \emph{extra-exceptional} case. In this case, the slope $r$ is one of $f, g, h$. We can triangulate the Dehn filling: for example we can attach a tetrahedron covering the edge corresponding to $r$, performing a diagonal exchange on the once-punctured torus triangulation, then immediately fold the two new faces across the diagonal, creating an edge with valence one. This case will be ignored in the arguments below.

\subsection{Notation for a voyage in the Farey triangulation}
\label{Sec:FareyVoyage}

We now give notation to keep closer track of the slopes obtained at each stage of the construction of a layered solid torus. 

As we have seen, each tetrahedron $\Delta_{k-1}$ replaces one set of slopes with another; the set of slopes corresponding to the triangle $T_{k-1}$ in the Farey triangulation is replaced with the set of slopes with the triangle $T_{k}$. Thus, we associate to $\Delta_{k-1}$ an oriented edge of the dual tree $\F$ of the Farey triangulation, from $T_{k-1}$ to $T_{k}$.

As $\F$ is an infinite trivalent tree, at each stage of a path in $\F$ without backtracking, after we begin and before we stop, there are two choices: turning left or right. As is standard, we denote L and R for these choices. Note that the choice of L or R is not well-defined when moving from $T_0$ to $T_1$, but thereafter the choice of L or R is well-defined. Thus, to the path $T_0, T_1, \ldots, T_{N+1}$ in $\F$, there is a word of length $N$ in the letters $\{\mbox{L,R}\}$. We call this word $W$. The $j$th letter of $W$ corresponds to the choice of L or R when moving from $T_j$ to $T_{j+1}$, which also corresponds to adding tetrahedron $\Delta_j$. 

As we voyage at each stage from $T_{k}$ to $T_{k+1}$, we pass through an edge $e_k$ of the Farey triangulation (dual to the corresponding edge of $\F$), which has one endpoint to our left (port) and one to our right (starboard).\footnote{As ``left" and ``right" are used in the context or the previous paragraph, we use the nautical terminology here.} We leave behind an old slope, one of the slopes of $T_{k}$, namely the one not occurring in $T_{k+1}$. And we head towards a new slope, namely the slope of $T_{k+1}$ which is not one of $T_k$.
\begin{defn} 
As we pass from $T_{k}$ to $T_{k+1}$, across the edge $e_k$, the slope corresponding to 
\begin{enumerate}
\item the endpoint of $e_k$ to our left is denoted $p_k$ (for port);
\item the endpoint of $e_k$ to our right is denoted $s_k$ (for starboard);
\item the vertex of $T_{k} \setminus T_{k+1}$ is denoted $o_k$ (old);
\item the vertex of $T_{k+1} \setminus T_{k}$ is denoted $h_k$ (heading).
\end{enumerate}
\end{defn}
Thus, the initial slopes $\{f,g,h\}$ are given by $\{o_0, s_0, p_0 \}$ in some order, and the final, or Dehn filling slope is given by $r = h_{N}$.
Adding the tetrahedron $\Delta_{k-1}$, we pass from $T_{k-1}$ to $T_k$, so the edges of $\Delta_{k-1}$ correspond to slopes $p_{k-1}, s_{k-1}, o_{k-1}, h_{k-1}$.

\begin{figure}
%% Creator: Inkscape inkscape 0.92.4, www.inkscape.org
%% PDF/EPS/PS + LaTeX output extension by Johan Engelen, 2010
%% Accompanies image file 'Ahoy.pdf' (pdf, eps, ps)
%%
%% To include the image in your LaTeX document, write
%%   \input{<filename>.pdf_tex}
%%  instead of
%%   \includegraphics{<filename>.pdf}
%% To scale the image, write
%%   \def\svgwidth{<desired width>}
%%   \input{<filename>.pdf_tex}
%%  instead of
%%   \includegraphics[width=<desired width>]{<filename>.pdf}
%%
%% Images with a different path to the parent latex file can
%% be accessed with the `import' package (which may need to be
%% installed) using
%%   \usepackage{import}
%% in the preamble, and then including the image with
%%   \import{<path to file>}{<filename>.pdf_tex}
%% Alternatively, one can specify
%%   \graphicspath{{<path to file>/}}
%% 
%% For more information, please see info/svg-inkscape on CTAN:
%%   http://tug.ctan.org/tex-archive/info/svg-inkscape
%%
\begingroup%
  \makeatletter%
  \providecommand\color[2][]{%
    \errmessage{(Inkscape) Color is used for the text in Inkscape, but the package 'color.sty' is not loaded}%
    \renewcommand\color[2][]{}%
  }%
  \providecommand\transparent[1]{%
    \errmessage{(Inkscape) Transparency is used (non-zero) for the text in Inkscape, but the package 'transparent.sty' is not loaded}%
    \renewcommand\transparent[1]{}%
  }%
  \providecommand\rotatebox[2]{#2}%
  \newcommand*\fsize{\dimexpr\f@size pt\relax}%
  \newcommand*\lineheight[1]{\fontsize{\fsize}{#1\fsize}\selectfont}%
  \ifx\svgwidth\undefined%
    \setlength{\unitlength}{143.64916706bp}%
    \ifx\svgscale\undefined%
      \relax%
    \else%
      \setlength{\unitlength}{\unitlength * \real{\svgscale}}%
    \fi%
  \else%
    \setlength{\unitlength}{\svgwidth}%
  \fi%
  \global\let\svgwidth\undefined%
  \global\let\svgscale\undefined%
  \makeatother%
  \begin{picture}(1,1)%
    \lineheight{1}%
    \setlength\tabcolsep{0pt}%
    \put(0,0){\includegraphics[width=\unitlength,page=1]{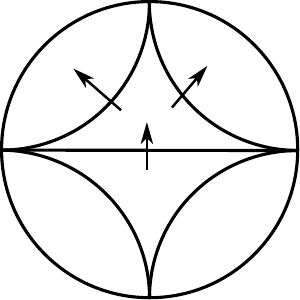}}%
    \put(0.17360766,0.77848659){\color[rgb]{0,0,0}\makebox(0,0)[lt]{\lineheight{0}\smash{\begin{tabular}[t]{l}L\end{tabular}}}}%
    \put(0.70769686,0.77615432){\color[rgb]{0,0,0}\makebox(0,0)[lt]{\lineheight{0}\smash{\begin{tabular}[t]{l}R\end{tabular}}}}%
    \put(0.40683441,0.90909356){\color[rgb]{0,0,0}\makebox(0,0)[lt]{\lineheight{0}\smash{\begin{tabular}[t]{l}$h$\end{tabular}}}}%
    \put(0.90360731,0.52660174){\color[rgb]{0,0,0}\makebox(0,0)[lt]{\lineheight{0}\smash{\begin{tabular}[t]{l}$s$\end{tabular}}}}%
    \put(0.02201028,0.53593078){\color[rgb]{0,0,0}\makebox(0,0)[lt]{\lineheight{0}\smash{\begin{tabular}[t]{l}$p$\end{tabular}}}}%
    \put(0.51645092,0.0344933){\color[rgb]{0,0,0}\makebox(0,0)[lt]{\lineheight{0}\smash{\begin{tabular}[t]{l}$o$\end{tabular}}}}%
    \put(0.41849574,0.33302355){\color[rgb]{0,0,0}\makebox(0,0)[lt]{\lineheight{0}\smash{\begin{tabular}[t]{l}ahoy!\end{tabular}}}}%
  \end{picture}%
\endgroup%

\caption{Labels on the slopes in the Farey graph.}
\label{Fig:Ahoy}
\end{figure}

\begin{lem}\label{Lem:LRSlopes} \
\begin{enumerate}
\item If the $i$th letter of $W$ is an L, then 
$o_i = s_{i-1}$, 
$p_i = p_{i-1}$, 
$s_i = h_{i-1}$.

\item
If the $i$th letter of $W$ is an R, then 
$o_i = p_{i-1}$, 
$p_i = h_{i-1}$, 
$s_i = s_{i-1}$.

\end{enumerate}
\end{lem}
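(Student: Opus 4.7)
The proof is essentially a case analysis based on the local picture of a turn in the Farey tessellation, as depicted in Figure~\ref{Fig:Ahoy}. I would first set up the situation carefully. Upon entering $T_i$ through the edge $e_{i-1}$, the three vertices of $T_i$ are $p_{i-1}$ on the port side, $s_{i-1}$ on the starboard side, and $h_{i-1}$ directly ahead (the third vertex of $T_i$, not on $e_{i-1}$). So the two edges of $T_i$ other than $e_{i-1}$ are the ``left-front'' edge $(p_{i-1}, h_{i-1})$ and the ``right-front'' edge $(s_{i-1}, h_{i-1})$.

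For case (i), an L-turn means we exit through the edge to port, namely $e_i = (p_{i-1}, h_{i-1})$. The vertex of $T_i$ not on $e_i$ is $s_{i-1}$, which is therefore the new old vertex: $o_i = s_{i-1}$. To identify $p_i$ and $s_i$, observe that $p_{i-1}$ is the common endpoint of $e_{i-1}$ and $e_i$ and acts as the ``pivot'' of the turn. Using the orientation of $\HH^2$, an L-turn corresponds to pivoting counterclockwise around $p_{i-1}$; the pivot vertex thus remains on the port side as $e_i$ is crossed, giving $p_i = p_{i-1}$, and the other endpoint $h_{i-1}$ of $e_i$ lies on starboard, so $s_i = h_{i-1}$.

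Case (ii) is entirely symmetric. An R-turn exits through the starboard edge $e_i = (s_{i-1}, h_{i-1})$, with $p_{i-1}$ as the omitted vertex, giving $o_i = p_{i-1}$. The pivot is now $s_{i-1}$, which stays on starboard: $s_i = s_{i-1}$, and the other endpoint $h_{i-1}$ of $e_i$ is on port, giving $p_i = h_{i-1}$.

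The only subtlety, really a matter of care rather than a true obstacle, is justifying the assertion that the pivot vertex stays on the same side of the vessel as the turn is executed. This is an orientation statement: inside $T_i$ the path sweeps out a sector at the pivot vertex, and an L-turn (resp.\ R-turn) means this sector is traversed counterclockwise (resp.\ clockwise) with respect to the orientation of $\HH^2$, so the pivot remains to port (resp.\ starboard) across both edges. This can either be verified by a direct coordinate computation in the disc model, or simply read off from Figure~\ref{Fig:Ahoy}, where the two cases are illustrated explicitly.
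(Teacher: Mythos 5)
Your proof is correct and follows essentially the same approach as the paper's, which simply cites Figure~\ref{Fig:Ahoy} with a one-line nautical gloss (``we wheel around the portside; our previous heading is now to starboard, and we leave starboard behind''). You flesh out the same local case analysis more explicitly, correctly identifying $o_i$ as the vertex of $T_i$ off the new edge $e_i$ and then sorting $p_i, s_i$ by orientation; a minor sharpening of your ``pivot'' heuristic is that the geodesic through $T_i$ crosses only $e_{i-1}$ and $e_i$, so the uncrossed third edge keeps $s_{i-1}$ and $h_{i-1}$ (resp.\ $p_{i-1}$ and $h_{i-1}$) on the same side, which gives $s_i = h_{i-1}$ (resp.\ $p_i = h_{i-1}$) without appealing to any pivoting motion.
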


\begin{proof}
This is immediate upon inspecting \reffig{Ahoy}. If we tack left as we proceed from $T_{i-1}$ through $T_i$ to $T_{i+1}$, then we wheel around the portside; our previous heading is now to starboard, and we leave starboard behind. Similarly for turning right.
\end{proof}

So ye sail, me hearty, until ye arrive at ye last tetrahedron $\Delta_{N-1}$, proceeding from triangle $T_{N-1}$ into $T_N$, with associated slopes $o_{N-1}, s_{N-1}, h_{N-1}, p_{N-1}$. We have made $N-1$ choices of left or right, L or R. The boundary $T^2 \times \{1\}$ of the layered solid torus constructed to this point has triangulation with slopes given by $T_N$, i.e.\ with slopes $p_{N-1}, s_{N-1}, h_{N-1}$.

The final choice of L or R takes us from triangle $T_N$ into triangle $T_{N+1}$, whose final heading $h_N$ is the Dehn filling slope $r$. 
This final L or R determines how we fold up the two triangles with slopes $T_N$ on the boundary of $\Delta_{N}$.
As discussed in \refsec{LST}, we fold the two triangular faces of the boundary torus together along an edge, so as to make a curve of slope $r = h_N$ homotopically trivial. This means folding along the edge of slope $o_N$. In the process, the edges of slopes $p_N$ and $s_N$ are identified.
An example is shown in \reffig{FareyWalk}.

\begin{figure}
  \import{figures/}{FareyWalk_small.pdf_tex}
  \caption{Example of a voyage in the Farey graph when $N=3$. The word $W$ is RRL. There are three tetrahedra in the layered solid torus, namely $\Delta_0$, $\Delta_1$, $\Delta_2$. The slopes along the way can have several names; for example $s_0=s_1=s_2=o_3$. No tetrahedron is added in the final step from $T_3$ to $T_4$.}
  \label{Fig:FareyWalk}
\end{figure}

If the final, $N$th letter of $W$ is an L, then $s_N = h_{N-1}$, $p_N = p_{N-1}$ and $o_N = s_{N-1}$; so we fold along the edge of slope $s_{N-1}$, identifying the edges of slopes $h_{N-1}$ and $p_{N-1}$ of the triangle $T_{N}$ describing the slopes on the boundary torus after layering all the solid tori up to $\Delta_{N-1}$.
Similarly, if the final letter of $W$ is an R, then $s_N = s_{N-1}$, $p_N = h_{N-1}$ and $o_N = p_{N-1}$, so we fold along the edge of slope $p_{N-1}$, identifying the edges of slopes $s_{N-1}$ and $h_{N-1}$ of $T_N$.

\subsection{Neumann-Zagier matrix before Dehn filling}

Start with the unfilled manifold, and assume there are $n_\mfc \geq 2$ cusps. We consider two of these cusps $\mfc_0, \mfc_1$ with cusp tori $\T_0, \T_1$ respectively. Suppose the triangulation $\TT$ has the property that $\T_1$ meets exactly two ideal tetrahedra $\Delta_1, \Delta_2$, each in one ideal vertex, and there exist generators $\mfm_0,\mfl_0$ of $H_1(\T_0)$ that avoid $\Delta_1$ and $\Delta_2$. We prove such a triangulation always exists in \refprop{NiceTriangulation} in Appendix~A. Cusp $\mfc_1$ will be filled. 
There is a unique ideal edge $e$ running into the cusp $\mfc_1$; its other end is in $\mfc_0$. The labellings on $\TT$ are (at this stage) made arbitrarily.

\begin{lem}\label{Lem:UnfilledNZForm}
Let $\TT$, $\mfm_0$ and $\mfl_0$ be as above. There is a choice of curves $\mfm_1, \mfl_1$ on $\T_1$ generating $H_1(\T_1)$ so that the corresponding Neumann--Zagier matrix $\NZ$ has the following form.
\begin{enumerate}
\item The row of $\NZ$ corresponding to edge $e$ contains only zeroes. In the cusp triangulation of $\mfc_0$, the unique vertex corresponding to $e$ is surrounded by six triangles, corresponding to ideal vertices of $\Delta_1$ and $\Delta_2$ in alternating order, which form a hexagon $\mathfrak{h}$ around $e$. 
\item The six vertices of $\mathfrak{h}$ correspond to the ends of three edges of $\TT$, denoted $f,g,h$. After possibly relabelling $\Delta_1$ and $\Delta_2$, the entries of $\NZ$ in the corresponding rows, and in the columns corresponding to $\Delta_1, \Delta_2$, are as follows.
\[
\kbordermatrix{
  & \Delta_1 
	& \Delta_2 \\ 
  f & 0 \quad 1 & 0 \quad 1 \\
  g & -1 \quad -1 & -1 \quad -1 \\
  h & 1 \quad 0 & 1 \quad  0
}
\]
\item
The rows of $\NZ$ corresponding to $\mfm_1$ and $\mfl_1$ contain entries as shown below in the columns corresponding to $\Delta_1, \Delta_2$, with all other entries in those rows zero.
\[
\kbordermatrix{
& \Delta_1 & \Delta_2 \\
\mfm_1 & 1 \quad 0 & -1 \quad 0 \\
\mfl_1 & 0 \quad 1 & 0 \quad -1 
}
\]
\item All other rows of $\NZ$ contain only zeroes in the columns corresponding to $\Delta_1$ and $\Delta_2$.
\end{enumerate}
\end{lem}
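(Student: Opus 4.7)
The plan is to exploit the hypothesis that $e$ is the unique ideal edge at $\mfc_1$; this pins down the local combinatorics of $\Delta_1, \Delta_2$ completely, and the matrix entries then follow directly from \refdef{NZConsts} and \refdef{abc-incidence}. Since only $\Delta_1, \Delta_2$ meet $\mfc_1$, each in a single ideal vertex, the three edges of each $\Delta_j$ emanating from its $\mfc_1$-vertex are all identified with $e$. Labelling that vertex $0$ in the oriented labelling of $\Delta_j$, these are the edges $01, 02, 03$, one of each type $a, b, c$, while the other three vertices of $\Delta_j$ lie at $\mfc_0$. The row-of-$e$ claim is then immediate: for $j \notin \{1, 2\}$ no edge of $\Delta_j$ is glued to $e$, while for $j \in \{1, 2\}$ we have $a_{e,j} = b_{e,j} = c_{e,j} = 1$, so $(d_{e,j}, d'_{e,j}) = (0,0)$ either way. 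The hexagon $\mathfrak{h}$ comes from counting corners: each of the three $\mfc_0$-vertices of $\Delta_j$ contributes one corner at the vertex of $\mfc_0$'s cusp triangulation corresponding to $e$, giving six triangles. Any face of $\Delta_1$ through vertex $0$ must be glued to a face also containing a $\mfc_1$-vertex, hence to a face of $\Delta_1$ or $\Delta_2$ through vertex $0$; the Appendix construction rules out self-gluings, forcing each face of $\Delta_1$ through $0$ to be glued to a face of $\Delta_2$ through $0$ and producing the alternation.

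For (ii), the six hexagon vertices correspond to the non-$e$ edges $12_j, 13_j, 23_j$ of $\Delta_j$, each appearing twice; alternation pairs these across $\Delta_1$ and $\Delta_2$ via a bijection $\phi$ on $\{12, 13, 23\}$ determined by the face gluings. Cyclic relabelling of the oriented labelling of $\Delta_2$ acts on $\phi$ by cyclic conjugation, and swapping $\Delta_1 \leftrightarrow \Delta_2$ inverts $\phi$; together these let us reduce $\phi$ to the identity when $\phi$ is even (the case guaranteed by the Appendix). After such a relabelling, $h := 23_1 = 23_2$ is the paired $a$-edge, $f := 13_1 = 13_2$ the paired $b$-edge, and $g := 12_1 = 12_2$ the paired $c$-edge. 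Substituting into $(d_{k,j}, d'_{k,j}) = (a_{k,j} - c_{k,j}, b_{k,j} - c_{k,j})$ for $k \in \{f, g, h\}$ and $j \in \{1, 2\}$ yields the stated matrix.

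For (iii) I would take $\mfm_1$ to be a simple closed curve on $\T_1$ making one anticlockwise pass through the $a$-corner of the triangle at $0_1$ and one clockwise pass through the $a$-corner at $0_2$ (and no other corners), and $\mfl_1$ the analogue for $b$-corners. These curves exist on the two-triangle torus $\T_1$ and have geometric intersection one, so form a basis of $H_1(\T_1)$; \refdef{abc-incidence} reads off the stated entries. For (iv), every edge of $\Delta_1, \Delta_2$ is among $\{e, f, g, h\}$, so all other edge-rows of $\NZ$ vanish in the two columns; $\Delta_1, \Delta_2$ have vertices only at $\mfc_0, \mfc_1$, so curves on any other cusp cannot meet them; and $\mfm_0, \mfl_0$ avoid $\Delta_1, \Delta_2$ by hypothesis. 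The delicate points are the alternation in (i) and the realignment in (ii) requiring $\phi$ to be an even permutation; both reduce to controlling how the three faces of each $\Delta_j$ at vertex $0$ are glued, precisely what the Appendix's construction arranges.
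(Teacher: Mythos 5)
Your overall strategy is the same as the paper's: pin down the local combinatorics of $\Delta_1, \Delta_2$ from the hypothesis and read off the Neumann--Zagier entries. You also make explicit something the paper delegates to a figure, namely the bijection $\phi$ encoding how the three faces of $\Delta_1$ through its $\mfc_1$-vertex are glued to those of $\Delta_2$. But there are two genuine gaps in how you control $\phi$.

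First, you justify both the ``no self-gluings'' (alternation) claim and the claim that $\phi$ is even by appealing to ``the Appendix construction.'' This is not a valid justification: the lemma is stated for \emph{any} $\TT$ satisfying the hypotheses stated before it (only $\Delta_1,\Delta_2$ meet $\mfc_1$, each in a single ideal vertex, etc.), and \refprop{NiceTriangulation} is only invoked to show such triangulations exist. Both facts must be derived from the hypotheses themselves, and indeed they follow from the requirement that $\T_1$ is a torus: the cusp triangulation at $\mfc_1$ has two triangles $t_1,t_2$, three edges, and a single vertex $v_e$ (all six corners $a_j,b_j,c_j$ lie over the unique edge $e$). For $\T_1$ to be a manifold, the link of $v_e$ must be a single circle. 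A self-gluing of $t_j$ to itself pinches off a loop in that link, and one checks directly that when $\phi$ is a transposition the link of $v_e$ is a disjoint union of three $2$-cycles rather than a $6$-cycle; only $\phi \in \{e, (123), (132)\}$ gives a $6$-cycle. This is the topological input you need, and it replaces both appeals to the Appendix.

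Second, the mechanism you give for reducing $\phi$ to the identity is wrong. Cyclically relabelling $\Delta_2$ acts on $\phi$ by \emph{post-composition} with the induced $3$-cycle, $\phi \mapsto \sigma\circ\phi$, not by conjugation; relabelling $\Delta_1$ acts by pre-composition. Your claimed ``cyclic conjugation'' together with inversion does \emph{not} reach the identity from a $3$-cycle: the orbit of $(123)$ under conjugation-by-$\Z_3$ and inversion is just $\{(123),(132)\}$. By contrast, since $\Z_3$ is precisely the even subgroup of $S_3$, post-composing an even $\phi$ with $\phi^{-1}$ (achieved by a suitable cyclic relabelling of $\Delta_2$) already lands on the identity, and the $\Delta_1\leftrightarrow\Delta_2$ swap is unnecessary. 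Once these two points are repaired, your computation of the $f,g,h$ rows, your choice of $\mfm_1,\mfl_1$ for (iii), and your argument for (iv) all match the paper and are correct.
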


\begin{proof}
The proof is obtained by considering carefully the gluing. The two tetrahedra $\Delta_1$ and $\Delta_2$ must meet $\mfc_1$ as shown in \reffig{UnfilledCusp}, left. The three additional edge classes meeting these tetrahedra are labeled $f$, $g$, and $h$ as in that figure. These three edges have both endpoints on $\mfc_0$. We may determine how they meet $\mfc_0$ by tracing a curve in $\mfc_0$ around the edge $e$. This can be done by tracing a curve around the ideal vertex of the punctured torus made up of the two faces of $\Delta_1$ and $\Delta_2$ that do not meet $\mfc_1$. The result is the hexagon $\mathfrak{h}$ shown on the right of \reffig{UnfilledCusp}. 
\begin{figure}
  \import{figures/}{UnfilledTetr.pdf_tex}
  \caption{Left: How tetrahedra $\Delta_1$ and $\Delta_2$ meet the cusp $\mfc_1$. Right: How they meet the cusp $\mfc_0$.}
  \label{Fig:UnfilledCusp}
\end{figure}
Each of the eight ideal vertices of $\Delta_1$ and $\Delta_2$ have been accounted for: two on $\mfc_1$ and six forming the hexagon $\mathfrak{h}$ on $\mfc_0$.

Now label opposite edges of $\Delta_1$ and $\Delta_2$ as $a$-, $b$-, and $c$-edges respectively, as in \reffig{UnfilledCusp}. These labels determine the $4 \times 6$ entries in the rows of the incidence matrix $\In$, corresponding to edges $e,f,g,h$ and tetrahedra $\Delta_1, \Delta_2$, as follows.
\[
\kbordermatrix{
  & \Delta_1 & \Delta_2  \\
  e & 1 \quad 1 \quad 1 & 1 \quad 1 \quad 1 \\
  f & 0 \quad 1 \quad 0 & 0 \quad 1 \quad 0 \\
  g & 0 \quad 0 \quad 1 & 0 \quad 0 \quad 1 \\
  h & 1 \quad 0 \quad 0 & 1 \quad 0 \quad 0
}
\]
As the entries in the $e$ row account for all edges of tetrahedra incident with $e$, all other entries of $\In$ in this row are zero.
Moreover, as the entries in the $e,f,g,h$ rows account for all edges of $\Delta_1$ and $\Delta_2$, any other row of $\In$ has all zeroes in the columns corresponding to $\Delta_1$ and $\Delta_2$.

\begin{figure}
  \import{figures/}{MeridLong.pdf_tex}
  \caption{Choices for $\mfm_1$ and $\mfl_1$.}
  \label{Fig:MeridLong}
\end{figure}
Turning to the cusp $\mfc_1$, we can choose $\mfm_1, \mfl_1$ as shown in \reffig{MeridLong}. Then $\mfm_1$ has $a$-incidence number $1$ with $\Delta_1$ and $-1$ with $\Delta_2$ (\refdef{abc-incidence}), and all other incidence numbers zero. In other words, $a^\mfm_{1,1} = 1$ and $a^\mfm_{1,2} = -1$ are the only nonzero incidence numbers $a/b/c^\mfm_{1,j}$. Similarly, $\mfl_1$ has $b$-incidence numbers $1$ with $\Delta_1$ and $-1$ with $\Delta_2$, i.e.\ $b^\mfl_{1,1} = 1$ and $b^\mfl_{1,2} = -1$, and all other incidence numbers zero.

Forming the Neumann--Zagier matrix by subtracting columns of $\In$, and subtracting incidence numbers, according to \refdef{NZConsts}, we obtain the form claimed in (i)--(iii).

It remains to show that in all rows of $\NZ$ other than the $e,f,g,h,\mfm_1,\mfl_1$ rows, there are zeroes in the $\Delta_1$ and $\Delta_2$ columns. We have seen that $\In$ contains only zeroes in the $\Delta_1$ and $\Delta_2$ columns in all rows other than $e,f,g,h$ rows, hence $\NZ$ also has zeroes in the corresponding rows and columns. The remaining rows to consider are the $\mfm_k$ and $\mfl_k$ rows for $k = 0$ and $k \geq 2$. By hypothesis (or \refprop{NiceTriangulation}(ii)), $\mfm_0, \mfl_0$ avoid the tetrahedra $\Delta_1$ and $\Delta_2$, hence the $\mfm_0, \mfl_0$ rows of $\NZ$ have zero in the $\Delta_1, \Delta_2$ columns. For any $k \geq 2$, the cusp $\mfc_k$ does not intersect $\Delta_1$ or $\Delta_2$, as these tetrahedra have all their ideal vertices on $\mfc_0$ and $\mfc_1$. Thus whatever curves are chosen for $\mfm_k$ and $\mfl_k$, the corresponding rows of $\NZ$ are zero in the $\Delta_1$ and $\Delta_2$ columns.
\end{proof}
Note that in the above proof, by relabelling the tetrahedra $\Delta_1, \Delta_2$ and cyclically permuting $a$-, $b$- and $c$-edges, the effect is to cyclically permute the $f,g,h$ rows in the $\NZ$ entries above.

To compute the Ptolemy equations for Dehn-filled manifolds, we need a vector $B$ as in \refthm{NeumannSolution}.
\begin{lem}
\label{Lem:UnfilledSignVector}
Let $M, \TT$, cusp curves $\mfm_k, \mfl_k$, tetrahedra $\Delta_1, \Delta_2$, and the matrix $\NZ$ be as above. Suppose $\TT$ consists of $n$ tetrahedra.
Then there exists a vector \[B = (B_1, B'_1, \ldots, B_n, B'_n) \in \Z^{2n}\] with the following properties:
\begin{enumerate}
\item
$\NZ \cdot B = C$;
\item
The entries $B_1, B'_1$ and $B_2, B'_2$ corresponding to $\Delta_1$ and $\Delta_2$ are all zero.
\end{enumerate}
\end{lem}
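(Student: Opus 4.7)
The plan is to start from an arbitrary Neumann integer solution and adjust it using the freedom afforded by \refthm{NeumannSolution}(ii) in order to zero out the four entries at positions $\Delta_1, \Delta_2$. By \refthm{NeumannSolution}(i), pick an integer vector $B_0$ with $\NZ \cdot B_0 = C$, and denote its $\Delta_1, \Delta_2$ entries by $(b_1, b'_1, b_2, b'_2)$. The goal is to find integers $a_f, a_g, a_h$ so that $B := B_0 + a_f JR^G_f + a_g JR^G_g + a_h JR^G_h$ has all of $B_1, B'_1, B_2, B'_2$ equal to zero.

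The first step is to exploit the two cusp rows of $\NZ$ for $\mfm_1$ and $\mfl_1$ to obtain $b_1 = b_2$ and $b'_1 = b'_2$. By \reflem{UnfilledNZForm}(iii) these rows of $\NZ$ are supported only in the $\Delta_1, \Delta_2$ columns, with entries $(1,0,-1,0)$ and $(0,1,0,-1)$. Moreover, the proof of \reflem{UnfilledNZForm} shows that $\mfm_1, \mfl_1$ are chosen with all $c$-incidence numbers vanishing, hence $c^\mfm_1 = c^\mfl_1 = 0$ and the corresponding entries of $C$ are zero. Reading off these two rows of $\NZ \cdot B_0 = C$ therefore gives $b_1 - b_2 = 0$ and $b'_1 - b'_2 = 0$.

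The second step is to identify which of the correction vectors $JR^G_k$ from \refthm{NeumannSolution}(ii) can affect the $\Delta_1, \Delta_2$ entries at all. By \reflem{UnfilledNZForm}(i) and (iv), $R^G_e = 0$ and $R^G_k$ has zero entries in the $\Delta_1, \Delta_2$ columns for every edge $k \notin \{f, g, h\}$. Applying the block-diagonal $J$ to the short rows recorded in \reflem{UnfilledNZForm}(ii), the three remaining vectors $JR^G_f, JR^G_g, JR^G_h$ have $\Delta_1, \Delta_2$ entries $(-1, 0, -1, 0)$, $(1, -1, 1, -1)$ and $(0, 1, 0, 1)$ respectively.

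Choosing $a_f = b_1$, $a_g = 0$, $a_h = -b'_1$, the added correction contributes $(-b_1, -b'_1, -b_1, -b'_1)$ to the $\Delta_1, \Delta_2$ entries of $B_0$, which exactly cancels $(b_1, b'_1, b_1, b'_1) = (b_1, b'_1, b_2, b'_2)$ by the first step. Integrality of $B$ is immediate since every $JR^G_k$ has integer entries, and the equation $\NZ \cdot B = C$ persists because each $JR^G_k$ is symplectically orthogonal to every row of $\NZ$ by \refthm{NeumannZagier}(i)--(ii), giving $\NZ \cdot JR^G_k = 0$. The only conceptually delicate point, and thus the main thing to verify, is the clean matching between the two halves of the argument: the cusp equations cut the possible $(b_1, b'_1, b_2, b'_2)$ down to the two-parameter family $\{(x, y, x, y)\}$, and the correction vectors $JR^G_f, JR^G_g, JR^G_h$ project precisely into that same two-parameter family (with one linear dependence $JR^G_f + JR^G_g + JR^G_h \equiv 0$ in these coordinates), so a cancelling integer combination is always available.
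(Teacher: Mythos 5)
Your proof is correct and follows essentially the same route as the paper: start from a Neumann integer solution $B_0$ (the paper calls it $A$), use the $\mfm_1,\mfl_1$ rows of $\NZ \cdot B_0 = C$ (whose $C$-entries vanish because $c^\mfm_1 = c^\mfl_1 = 0$) to conclude the $\Delta_1,\Delta_2$ entries are $(b_1,b'_1,b_1,b'_1)$, then correct by $B = B_0 + b_1\,JR^G_f - b'_1\,JR^G_h$. The only cosmetic difference is that you also write down $JR^G_g$ and observe the dependence $JR^G_f + JR^G_g + JR^G_h \equiv 0$ in the $\Delta_1,\Delta_2$ coordinates before discarding it; this is accurate but unnecessary, and the final choice $a_g = 0$ lands you exactly on the paper's correction.
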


\begin{proof}
By \refthm{NeumannSolution}(i), there exists an integer vector $A = (A_1, A'_1, \ldots, A_n, A'_n)$ such that $\NZ \cdot A = C$. The $\mfm_1$ and $\mfl_1$ rows of $\NZ$ are given by \reflem{UnfilledNZForm}(iii), and the incidence numbers calculated in the proof show that the corresponding entries of $C$ are $-c^\mfm_1 = 0$ and $-c^\mfl_1 = 0$. Thus the $\mfm_1, \mfl_1$ rows of $\NZ \cdot A = C$ give equations $A_1 - A_2 = 0$ and $A'_1 - A'_2 = 0$. Thus $A_1=A_2$, $A'_1=A'_2$, and the $\Delta_1$ and $\Delta_2$ entries of $A$ are given by $(A_1,A'_1,A_1,A'_1)$.

We now adjust $A$ to obtain the desired $B$, using \refthm{NeumannSolution}(ii). Write $R^G_f$ and $R^G_h$ for the row vectors in the $\NZ$ matrix corresponding to edges $f$ and $h$. Lemma~\ref{Lem:UnfilledNZForm}(ii) says that $R^G_f$ has $(0,1,0,1)$ in the $\Delta_1$ and $\Delta_2$ columns, and $R^G_h$ has $(1,0,1,0)$. Thus $J R^G_f$ has $(-1,0,-1,0)$ in the $\Delta_1$ and $\Delta_2$ columns, and $J R^G_h$ has $(0,1,0,1)$.

Now let $B = A + A_1 \; J R^G_f - A'_1 \; J R^G_h$. By \refthm{NeumannSolution}(ii), $\NZ \cdot B = C$, and we observe that its $\Delta_1, \Delta_2$ entries are
\[
(B_1, B'_1, B_2, B'_2) = (A_1, A'_1,A_1, A'_1) + A_1 (-1,0,-1,0) - A'_1 (0,1,0,1) = (0,0,0,0). \qedhere
\]
\end{proof}

\subsection{Neumann--Zagier matrix of a layered solid torus}\label{Sec:NZLST}

Let the manifold $M$, triangulation $\TT$, cusp curves, tetrahedra and Neumann-Zagier matrix $\NZ$ be as in the previous section. 

To perform Dehn filling on $\mfc_1$, we first remove tetrahedra $\Delta^\mfc_1$ and $\Delta^\mfc_2$, leaving a manifold with boundary a once-punctured torus, triangulated by the boundary edges $f$, $g$, and $h$. Then we glue a layered solid torus to this once-punctured torus. 

Because generators $\mfm_0$, $\mfl_0$ of $H_1(\T_0)$ were chosen to be disjoint from $\Delta^\mfc_1$ and $\Delta^\mfc_2$ before Dehn filling, representatives of these generators avoid the hexagon $\mathfrak{h}$. When we pull out $\Delta^\mfc_1$ and $\Delta^\mfc_2$, $\mfm_0$ and $\mfl_0$ still avoid $\mathfrak{h}$, and consequently they will form generators of $H_1(\T_0)$ that avoid the layered solid torus when we perform the Dehn filling. 

Note that, as in \reffig{UnfilledCusp}~(left), the edges $f,g,h$ are each adjacent to a unique face with an ideal vertex at $\mfc_1$. Via these faces, each of $f,g,h$ corresponds to one of the three edges in the cusp triangulation of $\mfc_1$, and hence to slopes on the torus $\T_1$. As we add tetrahedra of the layered solid torus, each edge similarly corresponds to a slope on $\T_1$. We will in fact label edges by these slopes: we denote the edge corresponding to the slope $s$ by $E_s$. Thus, we regard $f,g,h$ as slopes, and these slopes form the triangle $T_0$ of \refsec{LST} in the Farey triangulation. In the notation of \refsec{FareyVoyage}, $\{f,g,h\} = \{o_0, s_0, p_0\}$ in some order. 

As discussed in \refsec{LST}, the layered solid torus that we glue is determined by the slope $r$ of the filling, and a path in the Farey triangulation from the triangle $T_0$ with vertices $f, g, h$ to the slope $r$. This path passes through a sequence of triangles $T_0, \ldots, T_{N+1}$, where $T_{N+1}$ contains $r$ as a vertex (and previous $T_j$ do not). The layered solid torus contains $N$ tetrahedra.

The $j$th tetrahedron ($\Delta_{j-1}$ in the notation of \refsec{FareyVoyage}) of the layered solid torus corresponds to passing from $T_{j-1}$ to $T_j$. 
The four vertices of these triangles are the slopes $(o_{j-1}, p_{j-1}, s_{j-1}, h_{j-1})$ as discussed in \refsec{FareyVoyage}. 
Each edge of the tetrahedron corresponds to one of these four slopes.
By \reflem{LRSlopes}, the sequence of ``old" slopes $o_0, o_1, \ldots$ consists of distinct slopes. We will label each tetrahedron by its ``old" slope: so rather than writing $\Delta_{j-1}$, we will write $\Delta_{o_{j-1}}$.
Then in the final step we glue the two boundary faces together along the edge of slope $o_N$, which identifies the edges of slopes $p_N$ and $s_N$. We denote this edge by $E_{p_N = s_N}$.

We arrive at an ideal triangulation of the manifold $M(r)$ obtained by Dehn filling $M$ along slope $r$ on cusp $\mfc_1$. 

The tetrahedra of this triangulation are of two types: those inside and outside the layered solid torus. We split the columns of the Neumann-Zagier matrix into two blocks accordingly. The $N$ tetrahedra of the layered solid torus are labelled by their ``old" slopes, $\Delta_{o_0}, \ldots, \Delta_{o_{N-1}}$. 

The edges are of three types:
\begin{itemize}
\item those lying outside the layered solid torus;
\item  those lying on the boundary of the layered solid torus, i.e.\ $f,g,h$ as above, which we call \emph{boundary edges};  and
\item (for $N \geq 1$) the edges lying in the interior of the layered solid torus, labelled by the slopes $h_0, h_1, \ldots, h_{N-1}$.
\end{itemize}
Note that in the final folding, two of these edges are identified. 
Thus, the rows of the Neumann-Zagier matrix of the triangulated Dehn-filled manifold come in four blocks, corresponding to the three types of edges above, and the cusp rows for the remaining cusps $\mfc_0$ and $\mfc_k$ for $k \geq 2$.

We regard the Dehn filled manifold $M(r)$ as built up, piece by piece, as follows. Let $M_0$ denote the original manifold $M$ with the two tetrahedra $\Delta_1, \Delta_2$ removed. Let $M_k$ denote the manifold obtained from $M_0$ after adding the first $k$ tetrahedra of the layered solid torus. Thus
\[
M_0 \subset M_1 \subset \cdots \subset M_N.
\]
Note $M_k$ has a triangulation of its boundary torus with slopes $(o_k, s_k, p_k)$, the vertices of the triangle $T_k$ of the Farey triangulation.

Then $M(r)$ is obtained by folding together the two boundary faces of $M_N$ along the edge of the boundary triangulation of slope $o_N$, and identifying the edges of the 3-manifold triangulation of slopes $s_N$ and $p_N$.

Even though each $M_k$ is not a cusped 3-manifold, rather having boundary components, there is still a well-defined notion of labelled triangulation and incidence matrix. Moreover, since by construction the cusp curves $\mfm_0, \mfl_0$ avoid the removed tetrahedra $\Delta_1, \Delta_2$, they still have well-defined incidence numbers with edges and tetrahedra. Thus there is a well-defined Neumann-Zagier matrix $\NZ_k$ for $M_k$, with rows for the edges and two rows for the cusp $\mfc_0$ (but no rows for the boundary left behind from cusp $\mfc_1$). Similarly, there is a well defined $C$-vector $C_k$ for $M_k$ (\refdef{ZzHC}).

\begin{lem}
\label{Lem:NZ0_C0}
The matrix $\NZ_0$ of $M_0$ is obtained from the incidence matrix $\NZ$ of $M$ by deleting the columns corresponding to the removed tetrahedra $\Delta_1, \Delta_2$, and deleting the rows corresponding to the removed edge $e$ and cusp $\mfc_1$.

The vector $C_0$ is obtained from the $C$-vector $C$ of $M$ by deleting entries corresponding to edge $e$ and zeros corresponding to $\mfm_1$ and $\mfl_1$, and adding $2$ to one of the entries corresponding to edges $f,g$ or $h$; by labelling $\Delta_1, \Delta_2$ appropriately, we can specify which entry.
\end{lem}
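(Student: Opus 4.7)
The plan is to read off both statements directly from \reflem{UnfilledNZForm}, treating the passage from $M$ to $M_0$ as a mechanical deletion in the incidence data: the entries of $\NZ$ and $C$ in rows or columns that survive depend only on the local gluing of individual tetrahedra to individual edges and cusp curves, so they are unaffected by removing $\Delta_1, \Delta_2$. The proof has essentially two ingredients: confirming that the deleted rows and columns carry no information that needed to be retained elsewhere, and tracking how the constants $c_k, c_k^\mfm, c_k^\mfl$ change.

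For the matrix claim, the rows indexed by $e$, $\mfm_1$, $\mfl_1$ and the columns indexed by $\Delta_1, \Delta_2$ are absent from $\NZ_0$ by the very definition of $\NZ_0$, since $e$ is no longer an edge of $M_0$, $\mfc_1$ is no longer a cusp, and $\Delta_1, \Delta_2$ are no longer tetrahedra. All other entries $d_{k,j}, d'_{k,j}, \mu_{k,j}, \mu'_{k,j}, \lambda_{k,j}, \lambda'_{k,j}$ are computed from the gluing of a single tetrahedron along a single edge or cusp curve, and are unchanged. The only thing to confirm is that no cross-information between the deleted rows and surviving columns (or vice versa) is discarded by mistake, and this is precisely \reflem{UnfilledNZForm}(iii)--(iv): the interaction of $\Delta_1, \Delta_2$ with the rest of the triangulation is confined to the $(e,f,g,h,\mfm_1,\mfl_1)$ rows, and by assumption $\mfm_0, \mfl_0$ register no incidence with $\Delta_1, \Delta_2$.

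For the $C$-vector claim, the entries at $e$, $\mfm_1$, $\mfl_1$ are simply removed because the corresponding equations no longer exist; by the incidence computations in the proof of \reflem{UnfilledNZForm}, the latter two entries were already zero, so no information is lost. The edge entries $2-c_k$ for $k\notin\{e,f,g,h\}$ and all surviving cusp entries $-c_k^\mfm, -c_k^\mfl$ are unchanged, since $\Delta_1, \Delta_2$ contribute nothing to those $c$-counts. The three entries $2-c_f, 2-c_g, 2-c_h$ carry all the content: by the incidence block displayed in the proof of \reflem{UnfilledNZForm}, with the labelling chosen there the two $c$-edges of $\Delta_1$ and $\Delta_2$ are identified to exactly one of $\{f,g,h\}$, contributing $2$ to its $c$-count; removing these tetrahedra therefore decreases that $c$-count by $2$ and increases the corresponding $2-c_k$ entry by $2$. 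The freedom to choose which of $f,g,h$ receives the adjustment is the freedom to cyclically permute $a$-, $b$-, $c$-labels on $\Delta_1, \Delta_2$, noted immediately after the proof of \reflem{UnfilledNZForm}.

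No genuine obstacle arises; the only subtlety worth flagging is that $M_0$ has boundary and is not a cusped manifold in the sense of \refsec{Triangulations_gluing_cusp}, so I would make explicit that $\NZ_0$ and $C_0$ are defined by the formulas of \refdef{NZConsts} and \refdef{ZzHC} applied to $M_0$, without asking that the resulting system express complete gluing equations at the boundary edges $f,g,h$; these equations will be completed when the layered solid torus is attached.
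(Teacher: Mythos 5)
Your proposal is correct and follows essentially the same argument as the paper: deletion of the $\Delta_1,\Delta_2$ columns and the $e,\mfm_1,\mfl_1$ rows leaves all other incidence data unchanged by \reflem{UnfilledNZForm}, and the incidence block computed in that proof shows that exactly one boundary edge (namely $g$, under the labelling of \reflem{UnfilledNZForm}) loses $c$-count $2$ when $\Delta_1,\Delta_2$ are removed, so its $2-c_k$ entry increases by $2$, with the cyclic relabelling freedom moving this to $f$ or $h$ if desired. The one small imprecision is phrasing ``the two $c$-edges of $\Delta_1$ and $\Delta_2$''; each tetrahedron has two $c$-edges (four in total), of which one from each goes to $g$ and one from each goes to $e$, but since $e$ is deleted this does not affect the conclusion.
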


\begin{proof}
The deletion does not otherwise affect incidence relations, so the only effect on the Neumann-Zagier matrix is to delete entries. We similarly delete the entries from $C$.

In \reflem{UnfilledNZForm}, the incidence matrix entries calculated show that one edge, $g$, is identified with one $c$-edge of $\Delta_1$ and $\Delta_2$, but edges $f$ and $h$ are not identified with any $c$-edges of $\Delta_1$ or $\Delta_2$. Thus the $g$ entry of $C_0$ is $2$ greater than the $g$ entry of $C$.

As noted in the comment after the proof of \reflem{UnfilledNZForm}, by labelling $\Delta_1, \Delta_2$ appropriately, we can cyclically permute the $f,g,h$ rows, so that we add $2$ to the $f$ or $h$ entry of $C$ instead.
\end{proof}

As each successive tetrahedron is glued, the effect on the cusp triangulation of $\mfc_0$ is shown in \reffig{NondegenerateFarey}. The hexagon $\mathfrak{h}$ of \reflem{UnfilledNZForm} has been removed, leaving a hexagonal hole; this hole is partly filled in, leaving a ``smaller" hexagonal hole.

\begin{figure}
\begin{center}
  \import{figures/}{NonDegenerateHexagon.pdf_tex}
  \caption{When attaching a nondegenerate layered solid torus, at each intermediate step a tetrahedron is attached with labels as shown on the right.}
  \label{Fig:NondegenerateFarey}
\end{center}
\end{figure}

\begin{lem}
\label{Lem:NZk_to_next}
For an appropriate labelling of the tetrahedron $\Delta_{k+1}$, the matrix $\NZ_{k+1}$ is obtained from $\NZ_k$ as follows.
\begin{enumerate}
\item Add a pair of columns for the tetrahedron $\Delta_{o_{k}}$, and a row for the edge with slope $h_k$. All entries of the new row are zero outside of the $\Delta_{o_k}$ columns. 
\item The only nonzero entries in the $\Delta_{o_k}$ columns are in the rows corresponding to edges of slope $o_k, s_k, p_k, h_k$ and are as follows. 
\begin{equation}
\label{Eqn:NZkAddedEntries}
\kbordermatrix{ & \Delta_{o_k} \\
E_{o_{k}} & 1 \quad  0 \\
E_{s_{k}} & -2 \quad -2 \\
E_{p_{k}} & 0 \quad 2 \\
E_{h_k} & 1 \quad  0
},
\end{equation}
\item All other entries are unchanged.
\end{enumerate}
The vector $C_{k+1}$ is obtained from $C_k$ by subtracting $2$ from the $E_{s_k}$ entry, and inserting an entry $2$ for the row $E_{h_k}$.
\end{lem}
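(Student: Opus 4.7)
The plan is to read off the claimed entries by direct combinatorial analysis of how $\Delta_{o_k}$ is layered onto $M_k$, then apply \refdef{NZConsts}. The heart of the computation is identifying, for the six edges of $\Delta_{o_k}$, which edges of the ambient triangulation they are glued to, then choosing the oriented labelling so that the three pairs of opposite edges are grouped appropriately.

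First I would analyse the combinatorics of the layering, as sketched in \reffig{NondegenerateFarey}. The two ``bottom'' faces of $\Delta_{o_k}$ are identified with the two boundary triangles of $\partial M_k$, which form a once-punctured torus with slopes $(o_k, s_k, p_k)$. The common edge of these two bottom faces is the ``bottom diagonal'' of the tetrahedron and is identified with $E_{o_k}$. Unfolding the two bottom triangles along this diagonal produces a quadrilateral whose two pairs of opposite sides are identified respectively with $E_{s_k}$ and $E_{p_k}$; crucially, opposite sides of this quadrilateral are opposite edges of the tetrahedron. The two ``top'' faces form the new boundary punctured torus of $M_{k+1}$; they share the ``top diagonal,'' which is a brand new edge of slope $h_k$, i.e.\ $E_{h_k}$, while the remaining four side edges of the top coincide with the four side edges of the bottom. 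Thus the six edges of $\Delta_{o_k}$ split into three opposite pairs: the diagonal pair $\{E_{o_k}, E_{h_k}\}$, an $E_{s_k}$-pair, and an $E_{p_k}$-pair.

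Next I would choose the oriented labelling of $\Delta_{o_k}$ so that the $a$-edges are the diagonal pair, the $b$-edges are the $E_{s_k}$-pair, and the $c$-edges are the $E_{p_k}$-pair. Since an oriented labelling is determined by the choice of which opposite pair is called $a$ (the others being fixed by the anticlockwise cyclic order around each vertex, as in \refdef{OrLabelling}), this is always achievable. With this labelling, the incidence numbers of \refdef{abc_edges} are immediate: $a_{o_k,\Delta_{o_k}}=a_{h_k,\Delta_{o_k}}=1$, $b_{s_k,\Delta_{o_k}}=2$, $c_{p_k,\Delta_{o_k}}=2$, and all other $a_{\cdot,\Delta_{o_k}},b_{\cdot,\Delta_{o_k}},c_{\cdot,\Delta_{o_k}}$ are zero. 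Substituting into $d = a - c$ and $d' = b - c$ of \refdef{NZConsts} produces exactly the four nonzero rows of \eqref{Eqn:NZkAddedEntries}. Since $\Delta_{o_k}$ touches no other edges, and since by construction the cusp curves $\mfm_0, \mfl_0$ avoid the layered solid torus (and all other cusps $\mfc_j$ for $j \geq 2$ miss the LST entirely), the remaining entries of the new $\Delta_{o_k}$ columns are all zero. No previously computed entry of $\NZ_k$ changes, because adding a new tetrahedron neither destroys nor modifies the incidences between old tetrahedra and old edges.

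Finally I would handle the $C$-vector as straightforward bookkeeping. The new row $E_{h_k}$ corresponds to a brand new edge; the only $c$-edge incidence it acquires comes from $\Delta_{o_k}$, whose $c$-edges are glued to $E_{p_k}$, not to $E_{h_k}$. Hence $c_{h_k}=0$ and the new $C$ entry is $2-0=2$. The row $E_{p_k}$ accumulates two new $c$-edge incidences from $\Delta_{o_k}$, so $c_{p_k}$ increases by $2$ and the corresponding $C$ entry $2-c_{p_k}$ decreases by $2$. All other edge entries are unaffected, and the cusp entries for $\mfc_0$ and $\mfc_j$ ($j\geq 2$) are unchanged because those cusp curves avoid $\Delta_{o_k}$. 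The main subtlety, and the one step worth double-checking, is the geometric claim that opposite sides of the unfolded bottom quadrilateral correspond to opposite edges in the tetrahedron; it is this fact that groups the four side edges into the two opposite $s_k$- and $p_k$-pairs and makes the desired oriented labelling in Step~2 available, rather than its cyclic permutation.
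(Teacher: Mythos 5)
Your computation of the incidence numbers and $C$-vector bookkeeping is correct, and the overall strategy (identify the six edges of $\Delta_{o_k}$, choose an oriented labelling, read off the $d,d'$ entries) is exactly the paper's. But there is a genuine gap at the step you flag as ``worth double-checking,'' and you have misidentified what the actual subtlety is.

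The fact that opposite sides of the unfolded quadrilateral are opposite edges of the tetrahedron is elementary and you may take it for granted; that is not the issue. The issue is the following. Once you declare the diagonal pair $\{E_{o_k}, E_{h_k}\}$ to be the $a$-edges, the assignment of $b$- versus $c$-edges to the two remaining opposite pairs is \emph{forced} by the ambient orientation of the $3$-manifold (since around each vertex $a,b,c$ must appear anticlockwise as viewed from outside). You assert the labelling with $b = s_k$, $c = p_k$ ``is always achievable,'' but what you must actually prove is that the orientation gives $b = s_k$, $c = p_k$ rather than the transposed assignment $b = p_k$, $c = s_k$. This matters: the transposed labelling would swap the $E_{p_k}$ and $E_{s_k}$ rows in \eqref{Eqn:NZkAddedEntries}, giving $E_{p_k}\colon (0,2)$ and $E_{s_k}\colon (-2,-2)$, which is not what the lemma asserts (and would also change the $C$-vector update). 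The paper resolves this with the observation that the three slopes of a two-triangle torus triangulation appear in anticlockwise order around a face of the boundary torus if and only if they are in anticlockwise order as vertices of the corresponding Farey triangle, and that $(o_k, s_k, p_k)$ are anticlockwise around $T_k$. Without some version of that argument your claimed labelling is not justified.
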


\begin{proof}
Of the six edges of $\Delta_{o_k}$, one of them is identified to $E_{o_k}$, two opposite edges are identified to $E_{p_k}$, two opposite edges are identified to $E_{s_k}$, and one is the newly added edge $E_{h_k}$. Observe that the three slopes of a triangle in a two-triangle triangulation of a torus are in anticlockwise order if and only if they form the vertices of a triangle of the Farey triangulation in clockwise order. Since $(o_k, s_k, p_k)$ are in anticlockwise order around the triangle $T_k$ of the Farey triangulation, they are slopes associated to the edges of a triangle on the boundary of $M_k$ in clockwise order. Hence we may label the edges of $\Delta_{o_k}$ identified with $E_{o_k}$ (hence also $E_{h_k}$) as $a$-edges, those identified with $E_{p_k}$ as $b$-edges, and those identified with $E_{s_k}$ as $c$-edges. This gives the entries of $\NZ_{k+1}$ and the changes to $C$-vectors claimed.

No other changes occur with incidence relations of edges and tetrahedra. As cusp curves avoid the layered solid torus, the cusp rows of the Neumann-Zagier matrix and the cusp entries of $C_k$ are also unchanged.
\end{proof}

Finally, we examine the effect of folding up the two boundary faces of $M_N$, and identifying the two edges $E_{p_N}, E_{s_N}$ into an edge $E_{p_N = s_N}$ to obtain the Dehn-filled manifold $M(r)$. 

We denote the row vector of $\NZ_N$ corresponding to the edge $E_s$ of slope $s$ by $R^G_s$; and we denote the row vector of $\NZ(r)$ corresponding to the identified edge $E_{p_N = s_N}$ by $R^G_{p_N = s_N}$. Similarly, we denote the entry of $C_N$ corresponding to slope $s$ by $(C_N)_s$; and we denote the entry of $C(r)$ corresponding to the identified edge $E_{p_N = s_N}$ by $C(r)_{p_N = s_N}$.
\begin{lem}
\label{Lem:NZr_from_NZN}
The Neumann-Zagier matrix $\NZ(r)$ of $M(r)$ is obtained from $\NZ_N$ by replacing the rows corresponding to edges $E_{p_N}$ and $E_{s_N}$ with their sum, corresponding to the edge $E_{p_N = s_N}$.
The $C$-vector $C(r)$ of $M(r)$ is obtained from $C_N$ by replacing the entries $(C_N)_{p_N}$, $(C_N)_{s_N}$ corresponding to edges $E_{p_N}, E_{s_N}$ with an entry $C(r)_{p_N = s_N} = (C_N)_{p_N} + (C_N)_{s_N} - 2$, corresponding to edge $E_{p_N = s_N}$.
\end{lem}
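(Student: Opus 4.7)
The plan is to read the effect of the final folding step directly off the triangulation, without reworking any gluing equations. The key topological observation is that the folding does not introduce or remove tetrahedra, and it modifies the edge set in exactly one way: it identifies $E_{p_N}$ with $E_{s_N}$ to form a single edge $E_{p_N = s_N}$. Every other edge of the triangulation of $M_N$ persists unchanged as an edge of $M(r)$; only its valence (as a cyclic sequence of incident tetrahedra) may change, and valence is not recorded by the Neumann--Zagier matrix.

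First I would pin down the exact topological effect of the fold by reviewing the discussion at the end of \refsec{LST} and \refsec{FareyVoyage}: in both the L-case and the R-case, folding the two boundary triangles across the edge $E_{o_N}$ identifies exactly the pair $\{E_{p_N}, E_{s_N}\}$ of remaining boundary edges. Since no tetrahedra are added or removed, and the cusp curves $\mfm_0, \mfl_0$ (together with any curves on other cusps $\mfc_k$ for $k \geq 2$) are disjoint from the layered solid torus by construction, the columns of $\NZ(r)$ corresponding to tetrahedra and the rows corresponding to cusps agree literally with those of $\NZ_N$. Likewise, every row of $\NZ(r)$ indexed by an edge other than $E_{p_N = s_N}$ agrees entry-by-entry with the corresponding row of $\NZ_N$, and the corresponding entries of $C(r)$ agree with those of $C_N$.

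Second I would obtain the new row by returning to the definitions. By \refdef{abc_edges} and \refdef{NZConsts}, the row of $\NZ$ for an edge $E$ records, for each tetrahedron $\Delta_j$, the differences $d_{E,j} = a_{E,j} - c_{E,j}$ and $d'_{E,j} = b_{E,j} - c_{E,j}$, where $a_{E,j}, b_{E,j}, c_{E,j}$ count the edges of $\Delta_j$ of each type identified to $E$. After the identification of $E_{p_N}$ and $E_{s_N}$, the incidence counts add: $a_{p_N = s_N,j} = a_{p_N,j} + a_{s_N,j}$, and similarly for $b$ and $c$. Hence for each $j$ the pair of entries of $\NZ(r)$ in the $E_{p_N = s_N}$ row and the $\Delta_j$ columns equals the sum of the corresponding pairs in $\NZ_N$, so the merged row is simply the sum of the $E_{p_N}$ and $E_{s_N}$ rows of $\NZ_N$. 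The $C$-vector formula then follows from the same additivity of the $c$-counts: $c_{p_N = s_N} = c_{p_N} + c_{s_N}$, so $C(r)_{p_N = s_N} = 2 - c_{p_N} - c_{s_N} = (2 - c_{p_N}) + (2 - c_{s_N}) - 2 = (C_N)_{p_N} + (C_N)_{s_N} - 2$.

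I expect the only step requiring care is the first: making precise what ``folding'' does to the edge set, and in particular confirming that the two boundary triangles of $\partial M_N$ share exactly the three edges $E_{o_N}, E_{p_N}, E_{s_N}$ so that the orientation-reversing identification fixing $E_{o_N}$ really does merge $E_{p_N}$ with $E_{s_N}$ while leaving everything else intact. This is geometric but reduces to an inspection of the two-triangle punctured-torus triangulation on $\partial M_N$, and is already implicit in \refsec{LST}--\refsec{FareyVoyage}.
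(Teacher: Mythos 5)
Your proposal is correct and follows essentially the same approach as the paper's proof: identify that the folding merges exactly the edges $E_{p_N}$ and $E_{s_N}$, observe that incidence counts $a_{k,j}, b_{k,j}, c_{k,j}$ combine additively under the identification (so the corresponding row of $\NZ$ becomes the sum), and note that the merged $C$-entry $2-c_{p_N=s_N}$ equals $(2-c_{p_N})+(2-c_{s_N})-2$. The only difference is that you spell out the additivity of the incidence counts more explicitly, which is a reasonable elaboration rather than a departure from the argument.
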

Thus row vectors $R^G_{p_N}$ and $R^G_{s_N}$ are replaced with $R^G_{p_N = s_N} = R^G_{p_N} + R^G_{s_N}$. Corresponding entries of $C_N$ are are also summed, but then we subtract $2$ for the replacement entry.

\begin{proof}
The only change in incidence relations between edges and tetrahedra after gluing is that all tetrahedra that were incident to edges $E_{p_N}$ or $E_{s_N}$ are now incident to the identified edge $E_{p_N = s_N}$. Thus we sum the two rows. The cusp rows are again unaffected.

Each $C$-vector entry corresponding to an edge $E_k$ is of the form $2 - c_k$, where $c_k = \sum_{j} c_{k,j}$ (\refdef{ZzHC}). When we combine the two edges, the $c_k$ terms combine by a sum, but in place of $2+2$ we must have a single $2$; hence we subtract $2$.
\end{proof}

The effect on the cusp triangulation of $\mfc_1$ is to close the hexagonal hole by gluing its edges together as in \reffig{TerminateFilling}.

\begin{figure}
  \import{figures/}{TerminateHexagon.pdf_tex}
  \caption{The last tetrahedron in the layered solid torus has its two interior triangles identified together, either by folding over the edge labeled $p_{N-1}$ or by folding over the edge labeled $s_{N-1}$. The two cases are shown.}
  \label{Fig:TerminateFilling}
\end{figure}

As mentioned previously, the slopes $(p_N, s_N)$ are equal to $(p_{N-1}, h_{N-1})$ if the last letter of $W$ is an L, and equal to $(h_{N-1}, s_{N-1})$ if the last letter of $W$ is an R. Either way, we observe that the slope $h_{N-1}$ is among those being identified. Thus the last new edge in the layered solid torus appears at step $N-1$, with label $h_{N-2}$ at that step. 

Alternatively, we may write the matrix $\NZ(r)$ by deleting the row $E_{h_{N-1}}$ from $\NZ_N$ and adding it to the row $E_{p_{N-1}}$ or $E_{s_{N-1}}$ accordingly as the last choice is an L or R.
Then the edges are regarded as having slopes $\{f,g,h\} = \{o_0, p_0, s_0\}$, together with $h_0, h_1, \ldots, h_{N-2}$.

With this notation, the Neumann-Zagier matrix $\NZ(r)$ has pairs of columns corresponding to tetrahedra, which consist of the tetrahedra of $M \setminus (\Delta^\mfc_1 \cup \Delta^\mfc_2)$, and the tetrahedra of the layered solid torus, $\Delta_{o_0}, \ldots, \Delta_{o_{N-1}}$. The rows correspond to the edges of $M$ disjoint from $\Delta^\mfc_1$ and $\Delta^\mfc_2$, and then edges $E_{o_0}, E_{s_0}, E_{p_0}$ on the boundary of the hexagon, then $E_{h_0}, E_{h_1}, \ldots, E_{h_{N-2}}$ inside the layered solid torus; and cusp rows corresponding to $\mfm_0, \mfl_0$. The general form is shown in \reffig{NZrMatrix}.

\begin{figure}
\begin{center}
\[
\NZ(r) = 
\kbordermatrix{ & \text{Tet of $M \setminus (\Delta^\mfc_1 \cup \Delta^\mfc_2)$} & \Delta_{o_0} & \Delta_{o_1} & \cdots & \Delta_{o_{N-1}} \\
\text{Edges of $M$} & * \quad * \quad \cdots \quad * & 0 \quad 0 & 0 \quad 0 & \cdots &  0 \quad 0 \\
\text{outside} & \vdots \quad \vdots \quad \ddots \quad \vdots & \vdots \quad \vdots & \vdots \quad \vdots & \ddots & \vdots \quad \vdots \\
\Delta^\mfc_1 \cup \Delta^\mfc_2 & * \quad * \quad \cdots \quad * & 0 \quad 0 & 0 \quad 0 & \cdots & 0 \quad 0 \\
\hline
  E_{o_0} & * \quad * \quad \cdots \quad * & 1 \quad 0 & 0 \quad 0 & \cdots & 0 \quad 0 \\
  E_{s_0} & * \quad * \quad \cdots \quad * & -2 \quad -2 & * \quad * & \cdots & * \quad *\\ 
	E_{p_0} & * \quad * \quad \cdots \quad * & 0 \quad 2 & * \quad * & \cdots & * \quad * \\
\hline
	E_{h_0} & 0 \quad 0 \quad \cdots \quad 0 & * \quad * & * \quad * & \cdots & * \quad * \\
	E_{h_1} & 0 \quad 0 \quad \cdots \quad 0 & 0 \quad 0 & * \quad * & \cdots & * \quad * \\
	E_{h_2} & 0 \quad 0 \quad \cdots \quad 0 & 0 \quad 0 & 0 \quad 0 & \cdots & * \quad * \\
	\vdots & \vdots \quad \vdots \quad \ddots \quad \vdots & \vdots \quad \vdots & \vdots \quad \vdots & \ddots & \vdots \quad \vdots \\	
	E_{h_{N-2}} & 0 \quad 0 \quad \cdots \quad 0 & 0 \quad 0 & 0 \quad 0 & \cdots & * \quad * \\
	\hline
	\mfm_0 & * \quad * \quad \cdots \quad * & 0 \quad 0 & 0 \quad 0 & \cdots & 0\quad 0 \\
	\mfl_0 & * \quad * \quad \cdots \quad * & 0 \quad 0 & 0 \quad 0 & \cdots & 0\quad 0 
}.
\]
\caption{Neumann-Zagier matrix of a Dehn-filled manifold.}
\label{Fig:NZrMatrix}
\end{center}
\end{figure}
Thus if there are $n$ edges and tetrahedra in the triangulation, then outside the layered solid torus there are $n-N$ tetrahedra and $n-N-2$ edges.

Lemma~\ref{Lem:NZr_from_NZN} includes the case where $N=0$, i.e.\ where the layered solid torus is \emph{degenerate}. In this case we go directly from $M$ to $M_0$ (removing $\Delta^\mfc_1 \cup \Delta^\mfc_2$) to $M(r)$.
In this case the filling slope $r$ is equal to $h_0$, so has distance $1$ from two of the initial slopes $f,g,h$, and distance $2$ from the other. These are the slopes labeled $r_1$, $r_2$, and $r_3$ in \reffig{DegenerateFarey}, left. 
No tetrahedra are added, and we skip to the final folding step, folding boundary faces of the boundary torus together  along the edge of slope $o_0$, and identifying the edges corresponding to slopes $s_0$ and $p_0$. The effect is to combine and sum the rows of $\NZ_0$ corresponding to $E_{s_0}$ and $E_{p_0}$.

\begin{figure}
  \import{figures/}{DegenerateHexagon.pdf_tex}
  \caption{Left: Dehn filling along slope $r_1$, $r_2$, or $r_3$ attaches a degenerate layered solid torus, with no tetrahedra. Right: The effect of such a Dehn filling on the cusp triangulation of $C_0$ is to fold the hexagon, identifying two boundary edges together.}
  \label{Fig:DegenerateFarey}
\end{figure}

The resulting matrix $\NZ(r)$ is described explicitly in the following propositions; they simply describe the result of applying the previous lemmas, and their proofs are immediate from those lemmas. \reffig{NZrMatrix} shows most of the structure described.

\begin{prop}
Suppose $\NZ(r)$ is the Neumann-Zagier matrix of $M(r)$, obtained by Dehn filling the manifold $M$ of \reflem{UnfilledNZForm}, with Neumann-Zagier matrix $\NZ$, along the slope $r$ on $\mfc_1$.
Then the rows of $\NZ(r)$ corresponding to edges outside the layered solid torus and its boundary, and the rows corresponding to $\mfm_0$ and $\mfl_0$, are as follows.
\begin{enumerate}
\item
Entries in columns corresponding to tetrahedra of the layered solid torus are all zero.
\item 
Entries in columns corresponding to tetrahedra outside the layered solid torus are unchanged from their entries in $\NZ$. \qed
\end{enumerate}
\end{prop}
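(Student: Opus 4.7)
The plan is to induct along the sequence of matrices $\NZ \rightsquigarrow \NZ_0 \rightsquigarrow \NZ_1 \rightsquigarrow \cdots \rightsquigarrow \NZ_N \rightsquigarrow \NZ(r)$ constructed in the preceding lemmas, verifying at each step that the specified rows (those for edges outside the layered solid torus and its boundary, and those for $\mfm_0, \mfl_0$) satisfy properties (i) and (ii).

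For the base case, \reflem{NZ0_C0} says that $\NZ_0$ is obtained from $\NZ$ purely by deleting rows and columns, so every surviving entry in an outside-edge row or in a $\mfm_0, \mfl_0$ row, restricted to a column of an outside tetrahedron, is literally the corresponding entry of $\NZ$. This gives (ii) at stage $0$, and (i) is vacuous because no tetrahedra of the layered solid torus are present yet.

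For the inductive step from $\NZ_k$ to $\NZ_{k+1}$, I would invoke \reflem{NZk_to_next}: the only nonzero entries in the two new columns for $\Delta_{o_k}$ lie in the rows $E_{o_k}, E_{p_k}, E_{s_k}, E_{h_k}$. The key combinatorial input is \reflem{LRSlopes}, which by a straightforward induction implies $\{o_k, p_k, s_k\} \subseteq \{f,g,h\}\cup\{h_0,\ldots,h_{k-1}\}$, so each of these four edges is either a boundary edge of the layered solid torus or an interior edge already added in a previous step. In particular, none of them is an edge outside the layered solid torus and its boundary, so the new columns have zero entries in every such row. The cusp rows $\mfm_0, \mfl_0$ likewise receive zero entries, because in \reflem{UnfilledNZForm} the curves $\mfm_0, \mfl_0$ were arranged to avoid $\Delta^\mfc_1\cup\Delta^\mfc_2$ and hence avoid the region in which the layered solid torus is glued; they therefore have zero incidence numbers with every $\Delta_{o_k}$. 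This establishes (i) for the new columns, while \reflem{NZk_to_next}(iii) says all other entries are unchanged, preserving (ii).

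Finally, the folding step from $\NZ_N$ to $\NZ(r)$ is handled by \reflem{NZr_from_NZN}: it combines the rows $E_{p_N}$ and $E_{s_N}$ into a single row, and since $p_N, s_N \in \{f,g,h\}\cup\{h_0,\ldots,h_{N-1}\}$ neither of the combined rows is an outside-edge or cusp row, so every row named in the proposition is untouched. I do not anticipate a real obstacle here; the only point requiring care is the combinatorial verification via \reflem{LRSlopes} that the four slopes associated with each layered tetrahedron never lie outside $\{f,g,h\}\cup\{h_0,\ldots,h_{k-1},h_k\}$, and that both folding slopes $p_N, s_N$ lie in the same set, which are each immediate inductive consequences of that lemma.
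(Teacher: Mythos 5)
Your proof is correct and takes essentially the same approach as the paper's, which simply remarks that the proposition is immediate from Lemmas~\ref{Lem:NZ0_C0}, \ref{Lem:NZk_to_next} and \ref{Lem:NZr_from_NZN}; you have spelled out the induction and the small combinatorial point (via \reflem{LRSlopes}) that the slopes $o_k, p_k, s_k, h_k$ and the folding slopes $p_N, s_N$ never name an edge outside the layered solid torus.
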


In the $N=0$ case, by \reflem{NZr_from_NZN} and subsequent discussion, the only edge rows of the layered solid torus are those with slopes $o_0$ and $s_0 = p_0$, and there are no columns corresponding to tetrahedra in the layered solid torus.
\begin{prop}
Suppose $N=0$. Then the entries in the rows of $\NZ(r)$ corresponding to the edges of the layered solid torus are as follows.
\begin{enumerate}
\item 
The row corresponding to $o_0$ has the same entries as corresponding columns of $\NZ$.
\item
The row corresponding to $s_0 = p_0$ is the sum of entries in $s_0$ and $p_0$ rows of $\NZ$. \qed
\end{enumerate}
\end{prop}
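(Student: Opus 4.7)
The plan is to apply the two preceding lemmas directly, specialized to the degenerate case $N=0$. When $N=0$, no tetrahedra of a layered solid torus are introduced, so $M(r)$ is obtained from $M$ in just two steps: first remove the cusp-meeting tetrahedra $\Delta^\mfc_1, \Delta^\mfc_2$ to obtain $M_0$, and then directly fold the two boundary triangles along the edge of slope $o_0$, identifying the edges of slopes $p_0$ and $s_0$. There is no intermediate layering to track.

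First I would invoke \reflem{NZ0_C0}: in passing from $\NZ$ to $\NZ_0$, the rows indexed by the hexagon-boundary edges $o_0, p_0, s_0$ (a permutation of $f,g,h$) are preserved, with the pair of columns for $\Delta_1$ and $\Delta_2$ simply deleted and the rows for $e$, $\mfm_1$, $\mfl_1$ discarded. The discarded rows play no role in the statement being proved. In particular, the $o_0, p_0, s_0$ rows of $\NZ_0$ agree, entry-by-entry in every surviving column, with the $o_0, p_0, s_0$ rows of $\NZ$.

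Next I would invoke \reflem{NZr_from_NZN} with $N=0$. Since $M_N = M_0$, passing from $\NZ_0$ to $\NZ(r)$ amounts to replacing the rows for $E_{p_0}$ and $E_{s_0}$ by their sum (a single row labelled $E_{p_0=s_0}$), while all other rows, including the row for $E_{o_0}$, remain untouched. Composing with the previous step, part (i) is the observation that the $E_{o_0}$ row of $\NZ(r)$ inherits the entries of the $o_0$ row of $\NZ$ on the surviving columns, and part (ii) is the observation that the $E_{p_0=s_0}$ row is the sum of the $p_0$ and $s_0$ rows of $\NZ$ on the surviving columns.

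There is no genuine obstacle here: the argument is pure bookkeeping, and both parts follow from stringing together the two lemmas. The only thing to be careful about is that in the $N=0$ regime there are no interior edges $E_{h_k}$ and no layered-solid-torus tetrahedra $\Delta_{o_k}$, so the ``middle'' block in the matrix template of \reffig{NZrMatrix} is empty and no additional rows or columns need to be analyzed beyond those explicitly listed in the statement.
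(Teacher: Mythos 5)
Your proof is correct and follows the same route the paper intends: the paper explicitly states these propositions "simply describe the result of applying the previous lemmas, and their proofs are immediate from those lemmas," and you carry out precisely that composition of \reflem{NZ0_C0} with \reflem{NZr_from_NZN} in the degenerate $N=0$ case. Your remark that no interior rows $E_{h_k}$ or columns $\Delta_{o_k}$ arise, so the middle block is empty, is the right point to flag and matches the paper's discussion of the degenerate case preceding the proposition.
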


\begin{prop}
\label{Prop:GeneralSlope}
Suppose $N \geq 1$. The entries in the rows of $\NZ(r)$ corresponding to the edges of the layered solid torus are as follows.
\begin{enumerate}
\item
In columns corresponding to the tetrahedra outside the layered solid torus: 
\begin{enumerate}
\item
the entries in the rows corresponding to the edges with slopes $h_0, \ldots, h_{N-2}$ are all zero (there are no such edges if $N=1$); and
\item
the entries in the rows corresponding to the boundary edges, with slopes $\{f,g,h\} = \{o_0, p_0, s_0\}$ are the same as in the corresponding rows and columns of $\NZ$. (The $p_0$ or $s_0$ row may be combined and summed with the $h_{N-1}$ row in the final step, but being summed with zeroes, the entries remain the same.)
\end{enumerate}
\item
The entries in the pair of columns corresponding to the tetrahedron $\Delta_{o_j}$, are as described in \reflem{NZk_to_next}, except that rows corresponding to slopes $p_N$ and $s_N$ are summed as in \reflem{NZr_from_NZN}. In particular, we have the following.
\begin{enumerate}
\item
The row of slope $o_0$ has $(1,0)$ in the $\Delta_{o_0}$ columns, zero in every other $\Delta_{o_j}$ column.
\item
Provided $s_0 \neq s_N$, the row of slope $s_0$ has a sequence of pairs $(-2,-2)$, followed by $(1,0)$ and then all zeroes. (The number of such pairs is $k+1$, where $W$ begins with a string of $k$ Rs.)
\item
Provided $p_0 \neq p_N$, the row of slope $p_0$ has a sequence of pairs $(0,2)$, followed by $(1,0)$ and then all zeroes. (The number of such pairs is $k+1$, where $W$ begins with a string of $k$ Ls.)
\item
In the two columns for $\Delta_{o_j}$, entries in rows of slope $h_{j+1}, \ldots, h_{N-2}$ are zero.\qed
\end{enumerate}
\end{enumerate}
\end{prop}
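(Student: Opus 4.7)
The plan is to construct $\NZ(r)$ step by step from $\NZ_0$ by applying \reflem{NZk_to_next} for each $k = 0, 1, \ldots, N-1$ as the tetrahedron $\Delta_{o_k}$ of the layered solid torus is glued, and then applying \reflem{NZr_from_NZN} for the final folding. The proposition's claims then follow by carefully tracking which slope each edge of each $\Delta_{o_k}$ is identified to, using \reflem{LRSlopes} to keep count as we voyage through the Farey triangulation. The whole proof is thus a bookkeeping exercise built on the previous three lemmas.

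First I would dispatch parts (i) and (ii)(d), which are essentially immediate. The row $E_{h_k}$ is first introduced at step $k+1$ via \reflem{NZk_to_next}(i), which guarantees that the only nonzero entries of this new row lie in the $\Delta_{o_k}$ columns; \reflem{NZk_to_next}(iii) then says no later step modifies them. This gives both (i)(a) (zeros in outside-tetrahedra columns) and (ii)(d) (zeros in $\Delta_{o_j}$ columns for $j < k$, i.e.\ $k \geq j+1$). For (i)(b), the rows for $o_0, p_0, s_0$ already exist in $\NZ_0$, and \reflem{NZ0_C0} identifies their outside-tetrahedra entries with those of $\NZ$; subsequent additions of \reflem{NZk_to_next} touch only $\Delta_{o_k}$ columns, and the final fold of \reflem{NZr_from_NZN} merely sums rows $E_{p_N}$ and $E_{s_N}$, which preserves the outside-tetrahedron entries (either because the row is untouched, or because the row it is summed with has the same outside entries via preservation under earlier steps).

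For part (ii), I track each of $o_0, p_0, s_0$ through the Farey voyage. Since the dual tree path has no backtracking, a slope that is dropped as some $o_j$ never reappears in a later $T_i$. The slope $o_0 \notin T_k$ for $k \geq 1$, and for $N \geq 1$ we have $o_0 \neq p_N, s_N$, so by \reflem{NZk_to_next}(ii) the row $E_{o_0}$ receives only the pair $(1,0)$ in the $\Delta_{o_0}$ columns (from the $E_{o_0}$ row slot) and is untouched afterward, giving (ii)(a). For (ii)(b), writing $W = L^k R \cdots$ (possible because $p_0 \neq p_N$ excludes the all-$L$ case), \reflem{LRSlopes}(i) gives $p_0 = p_1 = \cdots = p_k$, then the $(k+1)$th letter $R$ together with \reflem{LRSlopes}(ii) gives $o_{k+1} = p_k = p_0$. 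By \reflem{NZk_to_next}(ii), the row $E_{p_0}$ thus gets a $(-2,-2)$ pair in each of the $\Delta_{o_j}$ columns for $j = 0, \ldots, k$ (from the $E_{p_j}$ slot), then a $(1,0)$ pair in the $\Delta_{o_{k+1}}$ columns (from the $E_{o_{k+1}}$ slot), and zeros afterward since $p_0 \notin T_j$ for $j > k+1$. The hypothesis $p_0 \neq p_N$ ensures the final fold leaves this row unchanged. Part (ii)(c) is symmetric with $L$ and $R$ interchanged, using \reflem{LRSlopes}(ii) first and \reflem{LRSlopes}(i) at the turn.

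The main obstacle is purely organizational: ensuring every slope that appears as some $o_j, s_j, p_j, h_j$ is accounted for exactly once, and that rows do not silently acquire extra entries from later steps or the final fold. The nonreappearance of dropped slopes (a direct consequence of the non-backtracking dual tree walk) and the hypotheses $p_0 \neq p_N$ and $s_0 \neq s_N$ together neatly confine all interactions to the ranges described by the proposition; in particular they exempt the $E_{p_0}$ and $E_{s_0}$ rows from the final fold, which would otherwise introduce the only genuinely subtle case.
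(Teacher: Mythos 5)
Your proposal is correct and proceeds exactly as the paper intends: the paper states that this proposition and its companions ``simply describe the result of applying the previous lemmas, and their proofs are immediate from those lemmas,'' and your write-up is the natural elaboration of that remark, building $\NZ(r)$ by iterating \reflem{NZk_to_next} from $\NZ_0$ and then applying \reflem{NZr_from_NZN}, with \reflem{LRSlopes} supplying the slope bookkeeping. One small phrasing slip: in your justification of (i)(b), where the $p_0$ (or $s_0$) row is combined with the $h_{N-1}$ row in the final fold, the reason the outside entries are unchanged is that $E_{h_{N-1}}$ has \emph{zero} entries in the outside-tetrahedra columns (the same argument you give for (i)(a), applied to $h_{N-1}$), not that the two rows have ``the same outside entries'' — but your conclusion is right and the rest of the argument is sound.
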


\subsection{Building up the sign vector}

We will now show how to build up a vector $B(r)$ satisfying the sign equation \eqref{Eqn:SignEquation} for the Dehn-filled manifold $M(r)$, that is,
\[
\NZ(r) \cdot B(r) = C(r).
\]
We do this starting from the sign vector $B$ found for the unfilled manifold $M$ in \reflem{UnfilledSignVector}. We build up a sequence of vectors $B_0, \ldots, B_N$ associated to the manifolds $M_0, \ldots, M_N$. These vectors ``almost" satisfy $\NZ_k \cdot B_k = C_k$. From $B_N$ we obtain the desired vector $B(r)$.

In \reflem{NZ0_C0}, we showed that we can take $C_0$ to be obtained from $C$ by deleting the $e$ entry, and adding $2$ to one of the entries corresponding to slopes $\{f,g,h\} = \{o_0, s_0, p_0\}$, whichever we prefer. For the following, we want the $2$ to be added to the entry corresponding to slope $s_0$ or $p_0$. For definiteness, we take $C_0$ to be obtained by adding $2$ to the $s_0$ entry.

\begin{lem}
\label{Lem:SignVector0}
Let $B_0$ be the vector obtained from $B$ by removing the two pairs of entries corresponding to the removed tetrahedra $\Delta^\mfc_1, \Delta^\mfc_2$. Then $C_0 - \NZ_0 \cdot B_0$ consists of all zeroes, except for a $2$ in the entry corresponding to the edge with slope $s_0$.
\end{lem}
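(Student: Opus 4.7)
The plan is a direct calculation that simply tracks how the deletions and zero entries interact. The key observation is that by Lemma~\ref{Lem:UnfilledSignVector}(ii), the entries of $B$ corresponding to $\Delta^\mfc_1$ and $\Delta^\mfc_2$ are all zero, and by construction $B_0$ is obtained from $B$ by deleting precisely these (already zero) entries. So $B_0$ is effectively the restriction of $B$ to the surviving coordinates.

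First I would compute $\NZ_0 \cdot B_0$ row by row. Each row of $\NZ_0$ corresponds to a row of $\NZ$ that survived the deletion described in Lemma~\ref{Lem:NZ0_C0}, namely every row except the one indexed by edge $e$ and those indexed by $\mfm_1, \mfl_1$. For any surviving row $R$ of $\NZ$, deleting its $\Delta^\mfc_1, \Delta^\mfc_2$ column entries yields the corresponding row of $\NZ_0$. Since $B$ has zeros in exactly the positions that get deleted, the dot product is unchanged: the surviving-row entry of $\NZ_0 \cdot B_0$ equals the corresponding entry of $\NZ \cdot B$, which by Lemma~\ref{Lem:UnfilledSignVector}(i) equals the corresponding entry of $C$. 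In short, $\NZ_0 \cdot B_0$ is the restriction of $C$ to the surviving rows.

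Next, comparing with $C_0$: Lemma~\ref{Lem:NZ0_C0} tells us that $C_0$ is obtained from $C$ by deleting the $e, \mfm_1, \mfl_1$ entries (which are the same entries ignored above) and adding $2$ to one of the entries among $\{f,g,h\} = \{o_0, s_0, p_0\}$, where we are free to choose which one by an appropriate relabelling of $\Delta^\mfc_1, \Delta^\mfc_2$. Invoking this freedom, we take the $2$ to be added to the $p_0$ entry, as stated in the paragraph preceding the lemma. Subtracting, we get $C_0 - \NZ_0 \cdot B_0 = 2\,\mathbf{e}_{p_0}$, which is precisely the claim.

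There is no real obstacle here beyond careful bookkeeping; the entire argument rests on the fact that removing columns and removing zero entries of the vector they multiply produces the same inner product. The only substantive input is the zeroing-out of the $\Delta^\mfc_1, \Delta^\mfc_2$ entries of $B$ accomplished back in Lemma~\ref{Lem:UnfilledSignVector}, which is exactly what makes restriction commute with the matrix product.
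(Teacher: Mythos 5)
Your proof is correct and takes essentially the same route as the paper; the only cosmetic difference is that you collapse the paper's case split (rows with zero $\Delta^\mfc_1,\Delta^\mfc_2$ entries vs.\ the $f,g,h$ rows) into the single observation that deleting columns and deleting the corresponding zero entries of $B$ preserves the inner product, which is a mild simplification but not a different argument. The key inputs — \reflem{UnfilledSignVector}(ii) for the zero entries of $B$ and \reflem{NZ0_C0} for the shape of $C_0$ — are exactly those the paper uses.
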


\begin{proof}
We have $\NZ \cdot B = C$. Examine the effect of changing the terms to $\NZ_0 \cdot B_0$ and $C_0$.
By \reflem{UnfilledSignVector}, the vector $B$ has pairs of entries corresponding to $\Delta^\mfc_1$ and $\Delta^\mfc_2$ consisting of all zeroes.
Consider the rows of $\NZ$ corresponding to edges away from $\Delta^\mfc_1$ and $\Delta^\mfc_2$, together with the $\mfm_0, \mfl_0$ rows. These rows have all zero entries in $\Delta^\mfc_1$ and $\Delta^\mfc_2$ columns, by \reflem{UnfilledNZForm}. The corresponding rows of $\NZ_0$ are obtained by deleting the zero entries in the $\Delta^\mfc_1$ and $\Delta^\mfc_2$ columns (\reflem{NZ0_C0}). Thus the corresponding entries of $\NZ \cdot B$ and $\NZ_0 \cdot B_0$ are equal. Similarly, the corresponding entries of $C$ and $C_0$ are equal. So $C_0 - \NZ_0 \cdot B_0$ has zeroes in these entries.

By \reflem{NZ0_C0}, the only remaining rows of $\NZ_0$ are those corresponding to rows with slopes $\{f,g,h\} = \{o_0, s_0, p_0\}$. 

In both $\NZ \cdot B$ and $\NZ_0 \cdot B_0$ we obtain exactly the same terms from the tetrahedra outside $\Delta^\mfc_1$ and $\Delta^\mfc_2$, by \reflem{NZ0_C0} and construction of $B_0$. These account for all the terms in $\NZ_0 \cdot B_0$, but in $\NZ \cdot B$ there are also terms from the tetrahedra $\Delta^\mfc_1$ and $\Delta^\mfc_2$. However, as the corresponding entries of $B$ are zero, these terms are zero. So $\NZ_0 \cdot B_0$ and $\NZ \cdot B$ have the same entries in these rows, and hence also $C$. However, as discussed above, we have chosen $C_0$ to differ from $C$ by $2$ in the row with slope $s_0$. Hence $C_0 - \NZ_0 \cdot B_0$ is as claimed.
\end{proof}
Observe from the proof that \reflem{SignVector0} works equally well with the slope $s_0$ replaced with any of $\{f,g,h\} = \{o_0, s_0, p_0\}$.

As it turns out, going from $B_0$ to $B_1$ is a little different from the general case, and so we deal with it separately.
\begin{lem}
\label{Lem:SignVector1}
Let $B_1$ be obtained from $B_0$ by adding zero entries corresponding to $\Delta_{o_0}$ Then $C_1 - \NZ_1 \cdot B_1$ consists of all zeroes, except for a $2$ in the new entry corresponding to $E_{h_0}$.
\end{lem}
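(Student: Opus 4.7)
The plan is to compute $C_1 - \NZ_1 \cdot B_1$ entry by entry, combining the base case from \reflem{SignVector0} with the incremental description of $\NZ_1$ and $C_1$ supplied by \reflem{NZk_to_next} (applied with $k=0$). The proof will split the rows of $\NZ_1$ into two groups: those inherited from $\NZ_0$, and the one genuinely new row corresponding to $E_{h_0}$.

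First I would handle the inherited rows. The key observation is that $B_1$ differs from $B_0$ only by inserting a pair of zeroes in the $\Delta_{o_0}$ positions, and $\NZ_1$ differs from $\NZ_0$ in these inherited rows only by the insertion of the corresponding pair of $\Delta_{o_0}$ columns. Hence the $\Delta_{o_0}$ columns contribute nothing when dotted into the zero block of $B_1$, so $(\NZ_1 \cdot B_1)_k = (\NZ_0 \cdot B_0)_k$ for every inherited row $k$. Consequently, in inherited rows, $C_1 - \NZ_1 \cdot B_1 = (C_1 - C_0) + (C_0 - \NZ_0 \cdot B_0)$.

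Next, I would invoke \reflem{NZk_to_next}, which says $C_1 - C_0$ is zero on inherited rows except for a $-2$ in row $E_{p_0}$, together with \reflem{SignVector0}, which says $C_0 - \NZ_0 \cdot B_0$ is zero except for a $+2$ in row $E_{p_0}$. Adding these, the $E_{p_0}$ entry of $C_1 - \NZ_1 \cdot B_1$ becomes $-2 + 2 = 0$, and every other inherited row is $0 + 0 = 0$. Finally, for the new row $E_{h_0}$, \reflem{NZk_to_next} gives only the entries $(1,0)$ in the $\Delta_{o_0}$ columns and zeros elsewhere; dotting with the zero $\Delta_{o_0}$ block of $B_1$ yields $(\NZ_1 \cdot B_1)_{E_{h_0}} = 0$, while $(C_1)_{E_{h_0}} = 2$, so this row contributes exactly the claimed $2$.

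There is no real obstacle here; the statement is essentially a bookkeeping check. The only point deserving care is the sign-matching: the cancellation works precisely because the surplus $+2$ from \reflem{SignVector0} lives in the very same row $E_{p_0}$ from which \reflem{NZk_to_next} subtracts $2$ when producing $C_1$ from $C_0$. This is why we chose (in the discussion preceding \reflem{SignVector0}) to add the extra $2$ to the $E_{p_0}$ entry of $C_0$ rather than to $E_{o_0}$ or $E_{s_0}$, and the proof should briefly flag that this choice is what makes the cancellation clean.
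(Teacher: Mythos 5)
Your proof is correct and matches the paper's own argument essentially line for line: both observe that the zero $\Delta_{o_0}$-block of $B_1$ forces $\NZ_1\cdot B_1$ to agree with $\NZ_0\cdot B_0$ on inherited rows and to vanish on the new $E_{h_0}$ row, then combine this with Lemma~\ref{Lem:NZk_to_next}'s description of $C_1-C_0$ and Lemma~\ref{Lem:SignVector0} to obtain the cancellation in the $E_{p_0}$ row and the surviving $2$ in the $E_{h_0}$ row. Your closing remark about why the $+2$ in $C_0$ was placed in the $E_{p_0}$ entry is exactly the point the paper flags in its discussion surrounding Lemma~\ref{Lem:SignVector0}.
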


\begin{proof}
By \reflem{NZk_to_next}, $\NZ_{1}$ is obtained from $\NZ_0$ by adding a row for the edge with slope $h_0$ and a pair of columns for $\Delta_{o_0}$, with added nonzero entries as in \refeqn{NZkAddedEntries}. Also, $C_{1}$ is obtained from $C_0$ by subtracting $2$ from the $E_{s_0}$ entry, and inserting an entry $2$ for the row $E_{h_0}$.

Now each entry of $\NZ_0 \cdot B_0$ is equal to the corresponding entry in $\NZ_1 \cdot B_1$, since the terms are exactly the same, except for the terms of $\NZ_1 \cdot B_1$ corresponding to the added tetrahedron $\Delta_{o_0}$, which are zero since $B_1$ has zero entries there. The extra entry in $\NZ_1 \cdot B_1$, corresponding to $E_{h_0}$, is also zero, since this row of $\NZ_1$ only has nonzero entries in the terms corresponding to $\Delta_{o_0}$, where $B_1$ is zero. Thus $\NZ_1 \cdot B_1$ is equal to $\NZ_0 \cdot B_0$ with a $0$ appended.

Similarly, each entry of $C_0$ is equal to the corresponding entry of $C_1$, except for the entry of slope $s_0$, where $C_1 - C_0$ has a $-2$. The vector $C_1$ also has a $2$ appended.

From \reflem{SignVector0}, each entry of $C_0 - \NZ_0 \cdot B_0$ is zero, except for the $s_0$ entry, which is $2$.

Putting these together, each entry of $C_0 - \NZ_0 \cdot B_0$ equals the corresponding entry of $C_1 - \NZ_1 \cdot B_1$, except for the entry of slope $s_0$, where $C_1 - \NZ_1 \cdot B_1$ has entry $2-2 = 0$. The additional entry of $C_1 - \NZ_1 \cdot B_1$ of slope $h_0$ is $2-0=2$. Thus $C_1 - \NZ_1 \cdot B_1$ is as claimed.
\end{proof}

Had we chosen $C_0$ to differ from $C$ in the $p_0$ entry, then $C_0 - \NZ_0 \cdot B_0$ would have a nonzero entry for slope $p_0$; in this case we could take $B_1$ to be obtained from $B_0$ by adding entries $(0,1)$ and obtain the same conclusion.

We now proceed to the general case, building $B_{k+1}$ from $B_k$. We use the first $N-1$ letters of the word $W$ in the letters $\{\mbox{L,R}\}$.

\begin{lem}
\label{Lem:SignVector_next}
Suppose $1 \leq k \leq N-1$.
If the $k$th letter of the word $W$ is R (resp.\ L), let $B_{k+1}$ be obtained from $B_k$ by appending $(0,1)$ (resp.\ $(0,0)$) for the added tetrahedron $\Delta_{o_{k}}$.
Then $C_{k+1} - \NZ_{k+1} \cdot B_{k+1}$ consists of all zeroes except a $2$ in the entry corresponding to $E_{h_k}$.
\end{lem}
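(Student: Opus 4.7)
The plan is to argue by induction on $k$, with the base case $k=1$ being exactly \reflem{SignVector1}. The inductive hypothesis to maintain for $k \geq 1$ is that $C_k - \NZ_k \cdot B_k$ is zero in every entry except the entry corresponding to $E_{h_{k-1}}$, where it equals $2$. The induction then proceeds by passing to $k+1$, with the key technical input being \reflem{NZk_to_next} (for the change from $\NZ_k,C_k$ to $\NZ_{k+1},C_{k+1}$) and \reflem{LRSlopes} (for identifying $E_{h_{k-1}}$ among the slopes $E_{o_k},E_{p_k},E_{s_k}$ associated with the newly added tetrahedron $\Delta_{o_k}$).

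For the inductive step, first I would compute, in each of the two cases, the contribution of the new pair of $\Delta_{o_k}$ columns of $\NZ_{k+1}$ multiplied by the new pair of entries of $B_{k+1}$: in the L-case these entries are $(0,1)$ and the new contribution to the rows $E_{o_k},E_{p_k},E_{s_k},E_{h_k}$ is $0,-2,+2,0$ respectively (read off from \refeqn{NZkAddedEntries}); in the R-case the entries are $(0,0)$ and all four contributions vanish. The contributions to all old rows are zero because, by \reflem{NZk_to_next}, the $\Delta_{o_k}$ columns are zero outside those four rows. Meanwhile $C_{k+1}$ is obtained from $C_k$ by subtracting $2$ from the $E_{p_k}$ entry and inserting a new entry $2$ for $E_{h_k}$.

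Now I would combine these pieces. For every row not involving $E_{p_k}$, $E_{s_k}$, or the new $E_{h_k}$, nothing changes and the induction hypothesis carries over, giving a zero. The new $E_{h_k}$ row evaluates to $2 - 0 = 2$, as required. For the remaining two rows, the case analysis does the work: in the L-case one has $s_k = h_{k-1}$ and $p_k = p_{k-1}$ (by \reflem{LRSlopes}), so the $+2$ contribution from the new columns in the $E_{s_k} = E_{h_{k-1}}$ row exactly cancels the $+2$ of the inductive hypothesis, while the $-2$ contribution in $E_{p_k}$ is cancelled by the $-2$ adjustment in $C_{k+1}$. In the R-case one has $p_k = h_{k-1}$ and $s_k = s_{k-1}$, and the new columns contribute nothing; the $-2$ adjustment in $C_{k+1}$ cancels the inductive $+2$ sitting in the $E_{p_k} = E_{h_{k-1}}$ entry, and the $E_{s_k}$ row is unaffected.

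The main obstacle is purely bookkeeping, not conceptual: one must carefully track which of the four slopes $o_k,p_k,s_k,h_k$ coincides with $h_{k-1}$ in each of the L and R cases, and match the signs and the location of the $2$ in $C_{k+1}$ to the appended $B$-entries so that the right cancellation occurs. Once \reflem{LRSlopes} has been applied to rewrite $E_{s_k}$ as $E_{h_{k-1}}$ in the L-case and $E_{p_k}$ as $E_{h_{k-1}}$ in the R-case, the verification reduces to arithmetic of the form $2-2=0$ and $2-0=2$.
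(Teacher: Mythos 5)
Your proof is correct and follows essentially the same line of reasoning as the paper's: induction on $k$ with \reflem{SignVector1} as base case, using \reflem{NZk_to_next} to track how $\NZ_{k+1}$ and $C_{k+1}$ differ from $\NZ_k$ and $C_k$, and \reflem{LRSlopes} to identify $E_{h_{k-1}}$ with $E_{s_k}$ (L-case) or $E_{p_k}$ (R-case), so that the inductive $+2$ is cancelled in that row while a new $+2$ appears in the $E_{h_k}$ row. The bookkeeping you lay out is exactly the verification performed in the paper.
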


\begin{proof}
Proof by induction on $k$; \reflem{SignVector1} provides the base case. Assume $C_k - \NZ_k \cdot B_k$ has only nonzero entry $2$ in the row of slope $h_{k-1}$, and we consider $C_{k+1} - \NZ_{k+1} \cdot B_{k+1}$.

Again using \reflem{NZk_to_next}, $C_{k+1}$ and $C_k$ differ only in that $C_{k+1}$ has a $2$ in the new entry $E_{h_k}$, and has $2$ subtracted from the $E_{s_{k}}$ entry.

Suppose that the $k$th letter of $W$ is an R. Then by \reflem{LRSlopes} we have $o_k = p_{k-1}$, $s_k = s_{k-1}$ and $p_k = h_{k-1}$. Thus the new entries in $\NZ_{k+1}$ are given by
\[
\kbordermatrix{ & \Delta_{o_k} \\
E_{o_k} = E_{s_{k-1}} & 1 \quad 0 \\
E_{s_k} = E_{s_{k-1}} & -2 \quad -2 \\
E_{p_k} = E_{h_{k-1}} & 0 \quad 2 \\
E_{h_k} & 1 \quad 0
}.
\]
So with $B_{k+1}$ defined as stated, the entries of $\NZ_{k} \cdot B_{k}$ differ from the corresponding entries of $\NZ_{k+1} \cdot B_{k+1}$ in entries for rows of slope $p_k = h_{k-1}$ and $s_k$. In the row of slope $p_k = h_{k-1}$, $\NZ_{k+1} \cdot B_{k+1}$ is greater by $2$, and in the row of slope $s_k$, $\NZ_{k+1} \cdot B_{k+1}$ is lesser by $2$. The new entry in $\NZ_{k+1} \cdot B_{k+1}$ of slope $h_k$ is $0$.

Putting the above together, we find that $C_{k+1} - \NZ_{k+1} \cdot B_{k+1}$ has the same entries as $C_k - \NZ_k \cdot B_k$, except in the rows of slope: $p_k = h_{k-1}$, where they differ by $-2$; $s_k = s_{k-1}$, where they differ by $(-2)-(-2)=0$; and $h_k$, where there is an extra entry of $2$. Thus $C_{k+1} - \NZ_{k+1} \cdot B_{k+1}$ has unique nonzero entry $2$ in the $E_{h_k}$ entry as desired.

Suppose that the $k$th letter is an L; then we have $s_i = h_{i-1}$. The argument is simpler since $B_{k+1}$ simply appends zeroes to $B_k$. 
As we only append zeroes, there is no need to consider the new columns of $\NZ_{k+1}$ in any detail. Indeed, $\NZ_{k+1} \cdot B_{k+1}$ and $\NZ_k \cdot B_k$ have the same nonzero entries.  
Thus the nonzero entries in $C_{k+1} - \NZ_{k+1} \cdot B_{k+1}$ are those of $C_k - \NZ_k \cdot B_k$, with $-2$ added to the $s_k = h_{k-1}$ entry, and $2$ inserted in the $h_k$ entry, giving the result.
\end{proof}

We now consider the final step: the desired sign vector $B(r)$ is just $B_N$.
\begin{lem}\label{Lem:SignVector_Last}
The vector $B_N$ of \reflem{SignVector_next} satisfies $\NZ(r) \cdot B_N = C(r)$.
\end{lem}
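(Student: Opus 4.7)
The plan is to combine the conclusion of \reflem{SignVector_next} with the passage from $\NZ_N, C_N$ to $\NZ(r), C(r)$ described in \reflem{NZr_from_NZN}, using the crucial observation that $h_{N-1}$ is precisely one of the two slopes being identified in the final folding step.

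First, I would recall from \reflem{SignVector_next} that $C_N - \NZ_N \cdot B_N$ is zero in every coordinate except the row $E_{h_{N-1}}$, where it equals $2$. In particular, on every row of $\NZ_N$ other than $E_{h_{N-1}}$, the equation $\NZ_N \cdot B_N = C_N$ already holds exactly.

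Next, I would invoke \reflem{NZr_from_NZN}: to obtain $\NZ(r)$ from $\NZ_N$ one replaces the two rows $R^G_{p_N}, R^G_{s_N}$ by their sum $R^G_{p_N = s_N}$, and to obtain $C(r)$ from $C_N$ one replaces the two entries $(C_N)_{p_N}, (C_N)_{s_N}$ by $(C_N)_{p_N} + (C_N)_{s_N} - 2$. All other rows and entries are unchanged. Since $B_N$ itself is not altered in this step, every row of $\NZ(r) \cdot B_N = C(r)$ outside the combined edge row already holds, by the previous paragraph.

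It remains to check the combined row. The key structural point is that by \reflem{LRSlopes} applied to the final letter of $W$, we have $h_{N-1} \in \{p_N, s_N\}$: if the last letter is L then $(p_N, s_N) = (p_{N-1}, h_{N-1})$, and if it is R then $(p_N, s_N) = (h_{N-1}, s_{N-1})$. Therefore, of the two entries $(C_N - \NZ_N \cdot B_N)_{p_N}$ and $(C_N - \NZ_N \cdot B_N)_{s_N}$, exactly one equals $2$ (the $h_{N-1}$ one) and the other is $0$. Computing directly,
\[
\bigl(C(r) - \NZ(r) \cdot B_N\bigr)_{p_N = s_N}
= (C_N - \NZ_N \cdot B_N)_{p_N} + (C_N - \NZ_N \cdot B_N)_{s_N} - 2
= 2 + 0 - 2 = 0.
\]
Thus $C(r) - \NZ(r) \cdot B_N$ vanishes identically, proving $\NZ(r) \cdot B_N = C(r)$. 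The only non-routine ingredient is noticing that the $-2$ correction in the $C$-vector update of \reflem{NZr_from_NZN} exactly cancels the lingering $2$ left behind in \reflem{SignVector_next}; everything else is bookkeeping.
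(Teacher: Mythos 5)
Your proof is correct and follows essentially the same route as the paper: invoke \reflem{SignVector_next} to localize the discrepancy to the $E_{h_{N-1}}$ row, invoke \reflem{NZr_from_NZN} to see that passing to $\NZ(r)$ and $C(r)$ sums the $p_N$ and $s_N$ rows with a $-2$ correction in the $C$-vector, observe that $h_{N-1} \in \{p_N, s_N\}$, and conclude the lingering $2$ is exactly cancelled. The paper also explicitly checks that rows other than the combined one are unaffected, which you handle correctly as well.
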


\begin{proof}
By \reflem{NZr_from_NZN}, $\NZ(r)$ is obtained from $\NZ_n$ by replacing the rows of slope $p_N$ and $s_N$ with their sum, corresponding to the identified edge $E_{p_N = s_N}$. The row vectors $R^G_{p_N}$ and $R^G_{s_N}$ are replaced with
\[
R^G_{p_N = s_N} = R^G_{p_N} + R^G_{s_N}.
\]

Similarly, $C(r)$ is obtained from $C_N$ by replacing the corresponding entries $(C_N)_{p_N}, (C_N)_{s_N}$ with the combined entry
\[
C(r)_{p_N = s_N} = (C_N)_{p_N} + (C_N)_{s_N} -2.
\] 

By \reflem{SignVector_next}, $C_N - \NZ_N \cdot B_N$ has only nonzero entry $2$ corresponding to slope $h_{N-1}$. Note that $h_{N-1}$ is equal to one of the slopes $p_N, s_N$ to be combined (accordingly as the final letter of $W$ is an L or R).

Consider any row other than those corresponding to slopes $p_N$ or $s_N$. Such a row is unaffected by the combination of rows or entries. Hence $C_N - \NZ_N \cdot B_N$ has zero entry in this row; and since $\NZ(r)$ and $C(r)$ are equal to $\NZ_N$ and $C_N$ in these rows, $C(r) - \NZ(r) \cdot B_N$ has zero entry in these rows.

It remains to consider the single row obtained by combining two rows. Since these two rows include the row of slope $h_{N-1}$, the two corresponding entries of $C_N - \NZ_N \cdot B_N$ are $0$ and $2$ in some order. These entries are $(C_N)_{p_N} - R^G_{p_N} \cdot B_N$ and $(C_N)_{s_N} - R^G_{s_N} \cdot B_N$, so
\[
(C_N)_{p_N} - R^G_{p_N} \cdot B_N
+
(C_N)_{s_N} - R^G_{s_N} \cdot B_N = 2.
\]
Putting these together, we obtain the remaining entry of $C(r) - \NZ(r) \cdot B_N$ as
\begin{align*}
C(r)_{p_N = s_N} - R^G_{p_N = s_N} \cdot B_N
&= (C_N)_{p_N} + (C_N)_{s_N} - 2 - \left( R^G_{p_N} + R^G_{s_N} \right) \cdot B_N \\
& = (C_N)_{p_N} - R^G_{p_N} \cdot B_N + (C_N)_{s_N} - R^G_{s_N} \cdot B_N - 2 = 0. \qedhere
\end{align*}
\end{proof}

We have now proved the following.
\begin{prop}\label{Prop:Filled_B}
There exists an integer vector $B(r)$ such that $\NZ(r) \cdot B(r) = C(r)$.
The vector $B(r)$ is given by taking a vector $B$ for the unfilled manifold $M$ as in \reflem{UnfilledSignVector}, removing the two pairs of zeroes corresponding to removed tetrahedra $\Delta^\mfc_1, \Delta^\mfc_2$, and then appending:
\begin{enumerate}
\item a $(0,0)$ corresponding to the tetrahedron $\Delta_{o_0}$; then
\item $N-1$ pairs $(0,1)$ or $(0,0)$, corresponding to the first $N-1$ letters of the word $W$. For each R we append a $(0,1)$, and for each L we append a $(0,0)$. \qed
\end{enumerate}
\end{prop}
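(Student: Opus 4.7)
The plan is simply to assemble the sequence of lemmas already proved in this subsection, tracking a ``defect vector'' $C_k - \NZ_k \cdot B_k$ as we pass through the chain of intermediate manifolds
\[
M_0 \subset M_1 \subset \cdots \subset M_N,
\]
whose Neumann--Zagier data were described in \refsec{NZLST}.

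The first step is to take the integer vector $B$ of \reflem{UnfilledSignVector}, chosen so that $\NZ \cdot B = C$ and so that the four entries of $B$ corresponding to $\Delta^\mfc_1$ and $\Delta^\mfc_2$ all vanish. Because those four entries are zero, deleting them does no harm, and the resulting vector $B_0$ satisfies the conclusion of \reflem{SignVector0}: the defect $C_0 - \NZ_0 \cdot B_0$ is zero outside a single entry (of value $2$), which, by the labelling flexibility in \reflem{NZ0_C0}, we have arranged to lie in the row for the slope $p_0$ (or, equivalently, $s_0$).

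Next I would iteratively construct $B_1, B_2, \ldots, B_N$. The passage from $B_0$ to $B_1$ is treated separately in \reflem{SignVector1}: appending a pair $(0,0)$ for the newly added tetrahedron $\Delta_{o_0}$ exactly cancels the ``$-2$'' that \reflem{NZk_to_next} subtracts from the $E_{p_0}$ entry of $C$, while introducing a new $2$ in the freshly inserted row $E_{h_0}$. For $1 \leq k \leq N-1$, \reflem{SignVector_next} then supplies the inductive step: appending $(0,1)$ when the $k$th letter of $W$ is $\mathrm{L}$, or $(0,0)$ when it is $\mathrm{R}$, again leaves a single nonzero entry of $2$, now located in the row for the newly inserted edge $E_{h_k}$. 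At each stage the ``defect'' is simply carried along to the next freshly introduced edge row, so this is a clean induction with \reflem{SignVector1} as its base case.

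Finally, the concluding step is to perform the folding that produces $M(r)$ itself. By \reflem{NZr_from_NZN}, this replaces the rows of $\NZ_N$ for slopes $p_N$ and $s_N$ by their sum, and similarly combines the corresponding entries of $C_N$ (with a compensating $-2$). Since $h_{N-1}$ equals one of $p_N, s_N$ depending on whether the last letter of $W$ is $\mathrm{L}$ or $\mathrm{R}$, the lone defect entry $2$ in $C_N - \NZ_N \cdot B_N$ lies in precisely one of the two rows to be combined, and the algebra of \reflem{SignVector_Last} shows that $(C_N)_{p_N} + (C_N)_{s_N} - 2 - (R^G_{p_N} + R^G_{s_N}) \cdot B_N = 0$. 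Therefore $B(r) := B_N$ satisfies $\NZ(r) \cdot B(r) = C(r)$, and unwinding the construction shows $B(r)$ is obtained from $B$ exactly as stated. There is no substantive obstacle beyond bookkeeping; the one subtle point is ensuring the conventions of \reflem{SignVector0} place the defect at a slope that will later be absorbed by the folding step, which is automatic given the labelling freedom built into \reflem{NZ0_C0}.
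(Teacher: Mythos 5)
Your proposal is correct and is essentially identical to the paper's own argument: the proposition is stated with a \qed precisely because it is the cumulative conclusion of Lemmas \ref{Lem:SignVector0}, \ref{Lem:SignVector1}, \ref{Lem:SignVector_next}, and \ref{Lem:SignVector_Last}, tracking the defect $C_k - \NZ_k \cdot B_k$ through $M_0 \subset \cdots \subset M_N$ and the final fold. Your summary of the labelling convention matches the paper's remark following Lemma~\ref{Lem:SignVector1}, though strictly speaking the choice of placing the initial defect at $p_0$ is made for consistency with the $(0,0)$ appended at $k=0$ rather than because $p_0$ itself is absorbed by the fold (the defect migrates to $h_{N-1}$ before being absorbed).
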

In other words, the entry of $B$ corresponding to the tetrahedron $\Delta_{o_{k}}$, for $1 \leq k \leq N-1$, is $(0,1)$ if the $k$th letter of $W$ is an R, and $(0,0)$ if the $k$th letter of $W$ is an L.

\subsection{Ptolemy equations in a layered solid torus}

We can now write down explicitly the Ptolemy equations for a Dehn filled manifold.

To do so, we will suppose $M$ has two cusps $\mfc_0, \mfc_1$, and is triangulated such that exactly two tetrahedra $\Delta_1^1, \Delta_2^1$ meet $\mfc_1$, each in a single ideal vertex. Suppose also that curves $\mfm_0$ and $\mfl_0$ represent generators of the first homology of $\mfc_0$, and avoid triangles coming from $\Delta_1^1$ and $\Delta_2^1$ in the cusp triangulation of $\mfc_0$. We show in \refprop{NiceTriangulation} in \refsec{Appendix} that every 3-manifold of interest here admits such a triangulation, with such curves on the cusp triangulation of $\mfc_0$. 

Let $\NZ^{\flat}$ and $C^{\flat}$ denote the reduced Neumann--Zagier matrix and $C$-vector associated with this triangulation for $M$, where the triangulation is labelled to satisfy \reflem{GoodLabelling}. 
Finally, suppose $B$ is an integer vector that satisfies $\NZ^{\flat}\cdot B = C^{\flat}$.

\begin{thm}
\label{Thm:DehnFillPtolemy}
Let $M$ be a two-cusped manifold with cusps $\mfc_0$, $\mfc_1$, triangulated as above so that only two tetrahedra meet $\mfc_1$, and curves $\mfm_0$, $\mfl_0$ on the cusp triangulation of $\mfc_0$ avoid these tetrahedra. Perform Dehn filling on the cusp $\mfc_1$ by attaching a layered solid torus with meridian slope $r$, consisting of tetrahedra $\Delta_{o_0}, \dots, \Delta_{o_{N-1}}$ determined by the word $W$ in the Farey graph. Then the Ptolemy equations of the Dehn filled manifold $M(r)$ satisfy:
\begin{enumerate}
\item There exist a finite number of \emph{outside} equations, corresponding to tetrahedra of $M$ and $M(r)$ lying outside the layered solid torus. These are obtained as in \refdef{PtolemyEqn} using the reduced Neumann--Zagier matrix $\NZ^\flat$ and $B$ for the unfilled manifold $M$. In particular, they are independent of the Dehn filling. 
\item For tetrahedra of the layered solid torus, Ptolemy equations are
\[
\begin{array}{rcll}
- \gamma_{o_k} \gamma_{h_k} + \gamma_{p_k}^2 - \gamma_{s_k}^2 &=& 0 \quad & \text{if $k>0$ and the $k$th letter of $W$ is an R,} \\[4pt]
\gamma_{o_k} \gamma_{h_k} + \gamma_{p_k}^2 - \gamma_{s_k}^2 &=& 0 \quad & \text{if $k=0$ or the $k$th letter of $W$ is an L,}
\end{array}
\]
for $0\leq k \leq N-1$. We also set $\gamma_{p_N} = \gamma_{s_N}$.
\end{enumerate}
\end{thm}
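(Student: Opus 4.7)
The plan is to apply Definition~\ref{Def:PtolemyEqn} directly to each tetrahedron of the Dehn-filled manifold $M(r)$, and read off the Ptolemy equations from the ingredients already assembled in the previous subsections. Specifically, I will use Proposition~\ref{Prop:Filled_B} to supply the sign vector $B(r)$ satisfying $\NZ(r)\cdot B(r) = C(r)$, and then I will split the set of tetrahedra of $M(r)$ into those outside the layered solid torus and those in the layered solid torus, handling each block in turn. Once all entries are known, each Ptolemy equation follows by direct substitution into the formula of Definition~\ref{Def:PtolemyEqn}.

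For (i), I would argue as follows. Fix an outside tetrahedron $\Delta_j$. Its six edges are identified to the same edges of $M$ as before filling, since the layering process only introduces new edges interior to the solid torus and modifies the gluing of the boundary edges by identification. Because $\mfm_0$ and $\mfl_0$ avoid the removed tetrahedra $\Delta^\mfc_1, \Delta^\mfc_2$ (and a fortiori the layered solid torus), the incidence numbers $\mu_j,\mu'_j,\lambda_j,\lambda'_j$ for $\Delta_j$ are exactly the $\mfm_0,\mfl_0$ incidence numbers computed from the triangulation of $M$. Finally, Proposition~\ref{Prop:Filled_B} tells us that $B(r)$ agrees with the unfilled $B$ on the entries corresponding to outside tetrahedra. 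Substituting these into the Ptolemy formula shows that the outside Ptolemy equation of $\Delta_j$ for $M(r)$ is literally the one produced by $\NZ^\flat$ and $B$ of $M$ with $\mfm=\mfm_0$, $\mfl=\mfl_0$.

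For (ii), fix a layered-solid-torus tetrahedron $\Delta_{o_k}$. Since $\mfm_0,\mfl_0$ avoid the layered solid torus, we have $\mu_{o_k}=\mu'_{o_k}=\lambda_{o_k}=\lambda'_{o_k}=0$, so the monomial prefactors $\ell^{\pm\mu/2}m^{\pm\lambda/2}$ collapse to $1$. The edge identifications are those proved in Lemma~\ref{Lem:NZk_to_next}: under the chosen oriented labelling, the two $a$-edges go to $E_{o_k}$ and $E_{h_k}$, the two $b$-edges go to $E_{s_k}$, and the two $c$-edges go to $E_{p_k}$. Thus the three $\gamma$ products become $\gamma_{o_k}\gamma_{h_k}$, $\gamma_{s_k}^2$, and $\gamma_{p_k}^2$ respectively. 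It remains only to read off the signs $(-1)^{B_{o_k}}$ and $(-1)^{B'_{o_k}}$ from Proposition~\ref{Prop:Filled_B}: for $\Delta_{o_0}$ the pair is $(0,0)$, and for $\Delta_{o_k}$ with $k\geq 1$ the pair is $(0,1)$ when the $k$th letter of $W$ is an L and $(0,0)$ when it is an R. Substituting these produces the two cases displayed in the theorem. The final identification $\gamma_{p_N}=\gamma_{s_N}$ comes directly from the folding step described in Section~\ref{Sec:LST}, which glues the edges of slopes $p_N$ and $s_N$ into a single edge $E_{p_N=s_N}$ of $M(r)$.

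The main subtlety, rather than a genuine obstacle, is bookkeeping: one must verify that the labelling of each tetrahedron $\Delta_{o_k}$ chosen in Lemma~\ref{Lem:NZk_to_next} (so that $a$-edges map to $o_k,h_k$, $b$-edges to $s_k$, and $c$-edges to $p_k$) is consistent with the labelling used when solving $\NZ(r)\cdot B(r)=C(r)$ in Proposition~\ref{Prop:Filled_B}, and that the whole triangulation still satisfies Lemma~\ref{Lem:GoodLabelling} so that Theorem~\ref{Thm:Ptolemy_Apoly} applies. Since Proposition~\ref{Prop:Filled_B} was proved using precisely this labelling, and since the outside tetrahedra already provide an edge with $c_k\neq 2$ coming from the original triangulation of $M$, both conditions are automatic. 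With these consistency checks in place, the calculation above is purely a substitution into Definition~\ref{Def:PtolemyEqn}.
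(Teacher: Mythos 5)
Your proposal is correct and takes essentially the same approach as the paper: split tetrahedra into outside/inside the layered solid torus, invoke Proposition~\ref{Prop:Filled_B} for the sign vector, read off the edge identifications from Lemma~\ref{Lem:NZk_to_next} (the paper states them directly, but they come from the same place), note that $\mfm_0,\mfl_0$ disjoint from the solid torus kills the $\ell,m$ factors, and substitute into Definition~\ref{Def:PtolemyEqn}. Your closing paragraph on consistency of labellings and Lemma~\ref{Lem:GoodLabelling} is a reasonable extra remark, though strictly speaking the theorem's conclusion is only about the \emph{form} of the Ptolemy equations; the hypothesis of Lemma~\ref{Lem:GoodLabelling} is only needed when one then invokes Theorem~\ref{Thm:Ptolemy_Apoly} to conclude these equations compute the A-polynomial.
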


\begin{proof}
Item (i) follows from \refprop{GeneralSlope} and \refprop{Filled_B}: The nonzero entries of the columns of $\NZ(r)$ are identical to those of $\NZ$ for tetrahedra outside the layered solid torus, and entries of $B(r)$ corresponding to tetrahedra outside the layered solid torus are identical to those of $B$. Then~(i) follows immediately from \refdef{PtolemyEqn}.
  
As for~(ii), the tetrahedron $\Delta_{o_k}$ has its $a$-edges identified to the edges $E_{o_k}$ and $E_{h_k}$, both its $b$-edges identified to $E_{p_k}$, and both its $c$-edges identified to $E_{s_k}$, so the powers of $\gamma$ variables are as claimed. They are disjoint from the cusp curves $\mfm_0, \mfl_0$, so no powers of $\ell$ or $m$ appear in the Ptolemy equations. The corresponding pair of entries of $B$ is $(0,0)$ for $k=0$, and for $k \geq 1$, they are given by $(0,1)$ if the $k$th letter of $W$ is an R, and $(0,0)$ if the $k$th letter of $W$ is an L. At the final step the edges with slopes $p_N$ and $s_N$ are identified, with the effect of summing the corresponding rows of $\NZ$ matrices; this is also the effect of setting the variables $\gamma_{p_N}, \gamma_{s_N}$ equal in Ptolemy equations. Hence the Ptolemy equation of \refdef{PtolemyEqn} takes the claimed form. 
\end{proof}

%%%%%%%%%%%%%%%%%%%%%%%%%%%%%%%%%%%%%%%%%%%%%%%%%%%%%%%%%%%%%%%%%
\section{Example: Dehn-filling the Whitehead link}\label{Sec:Examples}

\label{Sec:Whitehead}

In this section, we work through the example of the Whitehead link and its Dehn fillings. The standard triangulation of the Whitehead link has four tetrahedra meeting each cusp. To apply our results, we need a triangulation with two tetrahedra meeting one of the cusps. This is obtained by a triangulation with five tetrahedra. Its gluing information is shown in \reffig{Whitehead_Table}, where the notation is as in Regina~\cite{Regina}: tetrahedra are labeled by numbers 0 through 4, with vertices labeled 0 through 3. Thus faces are determined by three labels. The notation 3(021) in row 0 under column ``Face 012'' means that the face of tetrahedron 0 with vertices 012 is glued to the face of tetrahedron 3 with vertices 021, with 0 glued to 0, 1 to 2, and 2 to 1. And so on. Note the software Regina~\cite{Regina} and SnapPy~\cite{SnapPy} can be used to confirm that the manifold produced is the Whitehead link complement.

\begin{figure}
  \begin{center}
  \begin{tabular}{c|c|c|c|c}
    Tetrahedron & Face 012 & Face 013 & Face 023 & Face 123 \\
    \hline
    0 & 3(021) & 1(213) & 2(130) & 1(230) \\
    1 & 4(102) & 2(132) & 0(312) & 0(103) \\
    2 & 2(203) & 0(302) & 2(102) & 1(031) \\
    3 & 0(021) & 4(103) & 4(203) & 4(213) \\
    4 & 1(102) & 3(103) & 3(203) & 3(213)
  \end{tabular}
  \end{center}
  \caption{Five tetrahedra triangulation of the Whitehead link complement.}
  \label{Fig:Whitehead_Table}
\end{figure}

In the triangulation, tetrahedra 3 and 4 are the only ones meeting one of the cusps, in vertices 3(3) and 4(3), respectively.
We have chosen the labelling so that the Neumann--Zagier matrix satisfies the conditions of \reflem{UnfilledNZForm}: see below. 
We will perform Dehn filling on the Whitehead link by replacing these two tetrahedra with a layered solid torus.

\begin{figure}
  \import{figures/}{WhiteheadCusp.pdf_tex}
  \caption{Cusp triangulation of the Whitehead link, with triangles corresponding to tetrahedra 3 and 4 shaded. The edge $e$ is at the centre of the hexagon, edges with slopes $\infty=1/0$, $3/1$, $2/1$ on the boundary of the hexagon. The additional vertex in the figure corresponds to the edge we call $0(23)$. Note $\mfl$ is in red, $\mfm$ in blue.}
  \label{Fig:WhiteheadCusp}
\end{figure}

The cusp neighbourhood of the other cusp of the Whitehead link is shown in \reffig{WhiteheadCusp}. The shaded hexagon consists of triangles from tetrahedra 3 and 4. 
Pulling out tetrahedra 3 and 4 will leave a manifold with punctured torus boundary. The slopes of these boundary curves can be computed in terms of the usual meridian/longitude of the cusp of the Whitehead link to be $3/1$, $2/1$, and $1/0=\infty$ (we used Regina~\cite{Regina} and SnapPy~\cite{SnapPy} to compare slopes under Dehn filling to identify these edges). Each slope corresponds to an edge of the punctured torus, and an edge of the triangulation, and appears twice in the hexagon of our cusp triangulation. The three slopes are labelled in \reffig{WhiteheadCusp}. There are two additional edges; one $e$ only meets tetrahedra 3 and 4. The other we denote by $0(23)$ (because the edge $0(23)$ in Regina notation corresponds to this edge class). 
Finally, we choose generators of the fundamental group of the cusp torus to be disjoint from the hexagon in the cusp neighbourhood.

We may now read the incidence matrix of the Whitehead link complement off of the cusp triangulation, and use it to find the Neumann--Zagier matrix, which is shown in \reffig{WhiteheadNZ}.

\begin{figure}
\[
\NZ = \kbordermatrix{
            && \Delta_0 &\vrule &&  \Delta_1 &\vrule &&  \Delta_2 &\vrule &&  \Delta_3 &\vrule &&  \Delta_4 \\
  E_{0(23)} & 1 & 0 &\vrule &   -1 & -1 &\vrule &  -2 & -2  &\vrule &  0 & 0  &\vrule &  0 & 0   \\
  \hline
  E_{3/1}  & 0 & 1  &\vrule &  1 & 0  &\vrule &  0 & 1 &\vrule &  1 & 0  &\vrule &   1 & 0    \\
  E_{2/1}  & 1 & 0  &\vrule &  -1 & -1  &\vrule &  0 & 0 &\vrule &  0 & 1  &\vrule &   0 & 1    \\
  E_{1/0}  & -2 & -1 &\vrule &  1 & 2  &\vrule &  2 & 1 &\vrule &  -1 & -1  &\vrule &   -1 & -1    \\ 
  \hline
  E_e     & 0 & 0  &\vrule &  0 & 0  &\vrule &  0 & 0  &\vrule &  0 & 0 &\vrule &   0 & 0 \\
  \hline
  \mfm_0  & -1 & -1  &\vrule &  0 & -1  &\vrule &  0 & 0 &\vrule &  0 & 0 &\vrule &  0 & 0 \\
  \mfl_0  & -1 & -2 &\vrule &  1 & -1  &\vrule &  0 & 0 &\vrule &  0 & 0 &\vrule &  0 & 0 \\
  \mfm_1 & 0 & 0   &\vrule &  0 & 0   &\vrule &  0 & 0 &\vrule &  1 & 0 &\vrule &  -1 & 0 \\
  \mfl_1 & 0 & 0   &\vrule &  0 & 0  &\vrule &   0 & 0 &\vrule &  0 & 1 &\vrule &  0 & -1
}
\]
\caption{The Neumann--Zagier matrix of the complement of the Whitehead link}
\label{Fig:WhiteheadNZ}
\end{figure}

The vector $C$ is $[-1, 2, 1, -2, 0, -1, -1, 0,0]^T$. 
Notice that the vector \[ B=[1,1,1,-1,1,0,0,0,0,0]^T \] satisfies the properties of \reflem{UnfilledSignVector}: $\NZ\cdot B = C$ and the last four entries of $B$ are all zero. We now have enough information to determine the outside Ptolemy equations for \emph{any} Dehn filling of the Whitehead link complement. By \refthm{DehnFillPtolemy} and \refdef{PtolemyEqn}, they are:
\begin{align}
\label{Eqn:DeltaWhitehead}
\Delta_0:
  \quad & -\ell^{1/2}m^{-1/2}\gamma_{0(23)}\gamma_{2/1} -\ell^{1/2}m^{-1}\gamma_{3/1}\gamma_{1/0} - \gamma_{1/0}^2 = 0 \\
  \Delta_1:
  \quad & -m^{1/2}\gamma_{3/1}\gamma_{1/0} - \ell^{1/2}m^{-1/2}\gamma_{1/0}^2 - \gamma_{0(23)}\gamma_{2/1} = 0 \nonumber \\
  \Delta_2:\quad & \gamma_{1/0}^2 - \gamma_{1/0}\gamma_{3/1} - \gamma_{0(23)}^2 = 0 \nonumber
\end{align}

Recall that we set $\gamma_n=1$, where $n$ is such that the $n$th gluing equation is redundant in the Neumann--Zagier matrix. For this example, we may always set $\gamma_{1/0}=1$, and then use the equation from $\Delta_2$ to write $\gamma_{3/1}$ in terms of $\gamma_{0(23)}$. Equations from $\Delta_0$ and $\Delta_1$ can then be used to write $\gamma_{0(23)}$ and $\gamma_{2/1}$ only in terms of $\ell$ and $m$. These may be substituted into additional Ptolemy equations that arise from Dehn filling. 

A Dehn filling is determined by a path in the Farey graph, giving a layered solid torus. 
Figure~\ref{Fig:FareyWhitehead} shows where we begin in the Farey graph, namely in the triangle $T_0$ with slopes $3/1, 2/1, 1/0$, and paths we take to obtain well-known Dehn fillings, in particular twist knots.

\begin{figure}
  \import{figures/}{FareyWhitehead_small.pdf_tex}
  \caption{Some Dehn fillings of the Whitehead link and their location in the Farey graph.}
  \label{Fig:FareyWhitehead}
\end{figure}

For example, if we attach a degenerate layered solid torus, folding along the edge of slope $1/0$, we will perform $1/1$ Dehn filling, which gives the trefoil knot complement. Since the trefoil is not hyperbolic, \refthm{Ptolemy_Apoly} is not guaranteed to apply, so we skip this Dehn filling. To obtain other twist knots, first cover slope $1/0$, stepping into triangle $T_1$ in the Farey graph, then swing R into triangle $T_2$. From there, the path depends on whether we wish to obtain an even twist knot or an odd one.

Consider performing $-1/1$ Dehn filling, to obtain the complement of the $4_1$ knot, or figure-8 knot. This Dehn filling is obtained by attaching a layered solid torus built of two tetrahedra, $\Delta_{3/1}$ and $\Delta_{2/1}$, where our naming convention is as in \refsec{NZLST}: Tetrahedron $\Delta_{o_0} = \Delta_{3/1}$ is attached when we step from $T_0$ to $T_1$ in the Farey graph, and $\Delta_{o_1}=\Delta_{2/1}$ when we step from $T_1$ to $T_2$. Notice that this step in the Farey graph is in the direction R.
Then to obtain the $4_1$ knot, from $T_2$ we fold over the edge $E_{1/1}$, identifying $E_{0/1}$ and $E_{1/0}$.

Equations arising from the layered solid torus can be computed with reference only to \refthm{DehnFillPtolemy}, without writing down the full Neumann--Zagier matrix. 
\begin{align}\label{Eqn:LSTWhitehead1}
  \Delta_{3/1}:\quad & \gamma_{3/1}\gamma_{1/1} +\gamma_{2/1}^2 - \gamma_{1/0}^2 = 0\\
  \Delta_{2/1}: \quad & -\gamma_{2/1}\gamma_{0/1} + \gamma_{1/1}^2 - \gamma_{1/0}^2= 0
  \nonumber
\end{align}
Observe that in the equation for $\Delta_{3/1}$, $\gamma_{3/1}$, $\gamma_{1/0}$, and $\gamma_{2/1}$ are already known in terms of $m$ and $\ell$ alone. Hence direct substitution allows us to write $\gamma_{1/1}$ in terms of $m$ and $\ell$. Similarly for $\gamma_{0/1}$ in the equation from $\Delta_{2/1}$. 

The equations for the figure-8 knot are finally obtained by setting the variables $\gamma_{0/1}=\gamma_{1/0}$. Then the final equation turns the system into a single equation in $m$ and $\ell$. The calculations for the figure-8 knot are carried out in Appendix~B, \refsec{AppxCalc}. 

Now consider the $5_2$ knot. This is obtained by starting with the same two tetrahedra $\Delta_{3/1}$ and $\Delta_{2/1}$ as in the case of the figure-8 knot. However, instead of folding across the edge $E_{1/1}$, we fold across the edge $E_{1/0}$, and identify $E_{1/1}$ to $E_{0/1}$; see \reffig{FareyWhitehead}. Thus the Ptolemy equations look identical to those above for the figure-8 knot, except set the variables $\gamma_{1/1}$ and $\gamma_{0/1}$ to be equal. As before, substitution gives the A-polynomial. Again the calculations are in \refsec{AppxCalc}. 

For the $7_2$ knot: turn left from the triangle $T_2$ in the Farey graph, picking up equation:
\[
\Delta_{1/0}:\quad \gamma_{1/0}\gamma_{1/2} + \gamma_{1/1}^2 - \gamma_{0/1}^2 =0,
\]
and identify variables $\gamma_{1/2}$ and $\gamma_{0/1}$. Substitution allows us to write $\gamma_{1/2}$ in terms of $m$ and $\ell$, and then use this to find the A-polynomial. 

For the $9_2$ knot: Turn right. Pick up a new equation:
\[
\Delta_{1/1}:\quad -\gamma_{1/1}\gamma_{1/3} + \gamma_{1/2}^2 - \gamma_{0/1}^2 = 0,
\]
and identify variables $\gamma_{1/3}$ and $\gamma_{0/1}$. 

Any twist knot with $2N+1$ crossings is obtained similarly, for $N\geq 4$. The word $W$ in the Farey graph has the form RLRR$\cdots$R. The Ptolemy equations include all the equations above, as well as a sequence of equations
\[  -\gamma_{1/k}\gamma_{1/(k+2)} + \gamma_{1/(k+1)}^2 - \gamma_{0/1}^2 = 0, \mbox{ for } 2\leq k \leq N-1. \]
At the end, the variables $\gamma_{0/1}$ and $\gamma_{1/{N-1}}$ are identified.

In all cases, a step in the Farey graph gives an equation with a single new variable; we use this equation to write the new variable in terms of $m$ and $\ell$. Then direct substitution at the final step yields the A-polynomial.

Twist knots with $2N$ crossings are obtained similarly from a word in the Farey graph of the form RRL$\cdots$L, with corresponding adjustments to the Ptolemy equations to determine the A-polynomial. 

%%%%%%%%%%%%%%%%%%%%%%%%%%%%%%%%%%%%%%%%%%%%%%%%%%%%%%%%%%%%%%%%%
\section{Appendix A: Nice triangulations of manifolds with torus boundaries}\label{Sec:Appendix}

In this appendix, we show that every 3-manifold admits a triangulation that behaves well with Dehn filling by layered solid tori, such that the results of \refsec{Dehn} apply. 

\begin{prop}\label{Prop:NiceTriangulation}
  Let $\overline{M}$ be a connected, compact, orientable, irreducible, $\bdy$-irreducible 3-manifold with boundary consisting of $m+1 \geq 2$ tori. Then, for any torus boundary component $\T_0$, there exists an ideal triangulation $\TT$ of the interior $M$ of $\overline{M}$ such that the following hold. 
\begin{enumerate}
  \item If $\T_1, \dots, \T_m$ are the torus boundary components of $\overline{M}$ disjoint from $\T_0$, then in $M$, the cusp corresponding to $\T_j$ for any $j=1, \dots, m$ meets exactly two ideal tetrahedra, $\Delta_{j,1}$ and $\Delta_{j,2}$. Each of these tetrahedra meets $\T_j$ in exactly one ideal vertex.
  \item There exists a choice of generators for $H_1(\T_0; \Z)$, represented by curves $\mfm_0$ and $\mfl_0$, such that $\mfm_0$ and $\mfl_0$ meet the cusp triangulation inherited from $\TT$ in a sequence of arcs cutting off single vertices of triangles, without backtracking, and such that $\mfm_0$ and $\mfl_0$ are disjoint from the tetrahedra $\Delta_{j,1}$ and $\Delta_{j,2}$, for all $j=1, \dots, m$.
  \end{enumerate}
\end{prop}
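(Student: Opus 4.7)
The plan is to start with an arbitrary ideal triangulation of $M$, modify it near each cusp $\T_1, \ldots, \T_m$ to enforce property (i), and then verify that (ii) can be arranged on $\T_0$. First I would invoke the classical fact that any compact orientable 3-manifold with torus boundary admits an ideal triangulation of its interior (e.g.\ via ideal spines), and fix such a triangulation $\TT_0$. At each cusp $\T_j$ with $j \geq 1$, the link of the cusp is a triangulated torus with $2V_j$ triangles, where $V_j$ is the number of ideal edges of $\TT_0$ ending at $\T_j$. The aim of the local modification at $\T_j$ is to reduce this link to a $2$-triangle (hence $1$-vertex, $3$-edge) triangulation in which the two triangles come from two distinct tetrahedra.

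The modification will be carried out by a finite sequence of $2$-$3$ and $3$-$2$ Pachner moves. A $2$-$3$ move across a face with two vertices at $\T_j$ induces a diagonal flip in the cusp triangulation, while a $3$-$2$ (or $0$-$2$) move can collapse an edge of the link whose endpoints are distinct vertices, reducing $V_j$ by one. Using connectivity of the flip graph on torus triangulations together with the availability of collapsible configurations after suitable flips, I would reduce $V_j$ to $1$. If the two remaining triangles happen to come from a single tetrahedron, one further $2$-$3$ move separates them onto distinct tetrahedra $\Delta_{j,1}, \Delta_{j,2}$. Because these modifications are supported in arbitrarily small neighborhoods of $\T_j$, I can perform them independently at each cusp $\T_j$, yielding a triangulation $\TT$ satisfying (i).

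For (ii), after all modifications the tetrahedra $\Delta_{j,k}$ contribute only finitely many triangles $\mathfrak{t}_1, \ldots, \mathfrak{t}_K$ to the cusp triangulation of $\T_0$. Since $\T_0$ is a torus and $\{\mathfrak{t}_i\}$ is a finite subcomplex, the complement admits simple closed curves realizing any class in $H_1(\T_0;\Z)$; I would pick such curves $\mfm_0, \mfl_0$ representing a basis of $H_1(\T_0; \Z)$ and then isotope them into general position so that each meets the cusp triangulation in a sequence of arcs cutting off single vertices of triangles without backtracking.

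The main obstacle will be the combinatorial argument in Step 2, specifically showing that after finitely many flips one can always find a valid edge-collapsing move reducing $V_j$. This relies on extending the Farey-graph connectivity of $1$-vertex torus triangulations to the $V$-vertex case, together with a standard edge-contraction argument. Some care is also needed to verify that the Pachner moves preserve the ideal triangulation structure and do not disturb the structure near other cusps, but this is automatic given the locality of the moves.
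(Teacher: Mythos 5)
Your proposal takes a genuinely different route from the paper, and unfortunately the differences are where the gaps lie. The paper does not start with an arbitrary ideal triangulation and improve it by Pachner moves; instead it invokes Jaco--Rubinstein to get a \emph{material} triangulation of $\overline{M}$ with exactly one vertex on each boundary torus, cones each boundary to infinity (which immediately produces exactly two tetrahedra per cusp, giving (i) with no combinatorial work), and then removes the remaining material vertices by Weeks-style tube drilling directed entirely into $\T_0$, tracking the effect on the cusp link of $\T_0$ explicitly. This keeps precise control over which triangles appear on $\T_0$ and where.

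There are three concrete problems with your approach. First, the claimed reduction of the cusp link of $\T_j$ to a one-vertex torus triangulation by $2$-$3$ and $3$-$2$ moves is not justified. Connectivity of the flip graph of torus triangulations on a fixed vertex set does not give you a vertex-reducing move on the \emph{3-manifold} triangulation; a $3$-$2$ move requires a degree-3 edge in the 3-manifold whose three surrounding tetrahedra form an embedded bipyramid, and after a $2$-$3$ flip in the cusp link there is no guarantee such a configuration exists. A self-contained argument here would essentially be a new lemma, not a ``standard edge-contraction argument.'' Second, the locality claim is false: a Pachner move modifies whole tetrahedra, which are not confined to a small neighbourhood of one cusp and will in general have ideal vertices at several cusps (including $\T_0$), so modifications at different $\T_j$ are not independent and can destroy a previously-arranged normal form. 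Third, and most importantly for (ii), after uncontrolled modifications you have no control over the subcomplex of $\T_0$ coming from the $\Delta_{j,k}$: it need not have connected, $\pi_1$-surjective complement, so the assertion that one can choose generators $\mfm_0, \mfl_0$ of $H_1(\T_0;\Z)$ disjoint from it is not automatic. In the paper this is exactly where the explicit construction pays off: the tetrahedra over $\T_j$ meet $\T_0$ only along a hexagon created by the tube drilling, and the inductive description of how the cusp link of $\T_0$ changes at each step lets one route $\mfm_0$ and $\mfl_0$ around the added pieces.
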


In the notation of \refsec{symplectic}, the number of cusps here is $n_\mfc = m+1 \geq 2$.

\begin{proof}
By work of Jaco and Rubinstein~\cite[Prop.~5.15, Theorem~5.17]{JacoRubinstein:0Eff}, $\overline{M}$ admits a triangulation by finite tetrahedra, i.e.\ with material vertices, such that the triangulation has all its vertices in $\bdy \overline{M}$ and has precisely one vertex in each boundary component. Thus each component of $\bdy\overline{M}$ is triangulated by exactly two material triangles. 

Adjust this triangulation to a triangulation of $M$ with ideal and material vertices, as follows. For each component of $\bdy\overline{M}$, cone the boundary component to infinity. That is, attach $T^2\times[0,\infty)$. Triangulate by coning: over the single material vertex $v$ in $\T_j$, attach an edge with one vertex on the material vertex, and one at infinity. Over each edge $e$ in $\T_j$, attach a 1/3-ideal triangle, with one side of the triangle on the edge $e$ with two material vertices, and the other two sides on the half-infinite edges stretching to infinity. Finally, over each triangle $T$ in $\T_j$ attach a tetrahedron with one face identified to $T$, with all material vertices, and all other faces identified to the 1/3-ideal triangles lying over edges of the triangulation of $\bdy\overline{M}$.

Note that each cusp of $M$ now meets exactly two tetrahedra, in exactly one ideal vertex of each tetrahedron. To complete the proof, we need to remove material vertices.

Begin by removing a small regular neighbourhood of each material vertex; each such neighbourhood is a ball $B$ in $M$. Removing $B$ truncates the tetrahedra incident to that material vertex. We will obtain the ideal triangulation by drilling tubes from the balls to the cusp $\T_0$, disjoint from the tetrahedra meeting the other cusps. Thus the triangulation of the distinguished cusp $\T_0$ will be affected, but the triangulations of the other cusps will remain in the form required for the result.

To drill a tube, we follow the procedure of Weeks~\cite{Weeks:Computation} in section~3 of that paper (see also \cite{HikamiInoue} figures~10 and~11 for pictures of this process). That is, truncate all ideal vertices in the triangulation of $M$. Truncate material vertices by removing a ball neighbourhood, giving a triangulation by truncated ideal tetrahedra of the manifold $\overline{M}-(B_0\cup\dots \cup B_m)$ where $B_0, \dots, B_m$ are the ball neighbourhoods of material vertices.

There exists an edge $E_0$ of the truncated triangulation from $\T_0$ to exactly one of the $B_i$; call it $B_0$. Now inductively order the $B_i$ and choose edges $E_1, \dots, E_m$ such that $E_j$ has one endpoint on $B_k$ for some $k<j$ and one endpoint on $B_j$. Note these edges must necessarily be disjoint from the tetrahedra meeting cusps of $M$ disjoint from $\T_0$, since all edges in such a tetrahedron run from a ball to a different cusp, or from a ball back to itself. Note also that such edges $E_0, \dots, E_m$ must exist, else $M$ is disconnected, contrary to assumption. 

Starting with $i=0$ and then repeating for each $i=1, \dots, m$, take a triangle $T_i$ with a side on $E_i$. Cut $M$ open along the triangle $T_i$ and insert a triangular pillow with a pre-drilled tube as in \cite{Weeks:Computation}. The gluing of the two tetrahedra to form the tube is shown in \reffig{PillowWithTube}, with face pairings given in \reffig{PillowWithTube_Table}. The two unglued faces are then attached to the two copies of $T_i$. This gives a triangulation of $\overline{M}-(B_{i+1}\cup\dots\cup B_m)$ by truncated tetrahedra, with the ball $B_i$ merged into the boundary component corresponding to $\T_0$. Note it only adds edges, triangles, and tetrahedra, without removing any or affecting the other edges $E_j$.

\begin{figure}
\begin{center}
  \includegraphics{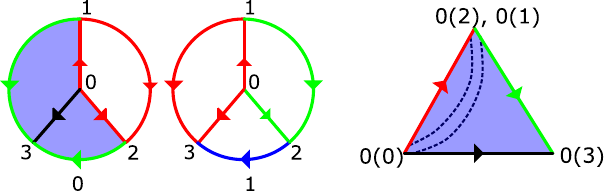}
  \caption{Gluing two tetrahedra as shown on the left yields a triangular pillowcase with a pre-drilled tube, as shown on the right.}
  \label{Fig:PillowWithTube}
\end{center}
\end{figure}

\begin{figure}
\begin{center}
  \begin{tabular}{c | c | c | c | c}
    & 012 & 013 & 023 & 123 \\
    \hline
    0 & 1 (013) & - & - & 1 (012) \\
    1 & 0 (123) & 0 (012) & 1 (123) & 1 (023)
  \end{tabular}
\end{center}
  
  \caption{Gluing instructions to form a triangular pillow with a pre-drilled tube. Notation is as in \cite{Regina}.}
  \label{Fig:PillowWithTube_Table}
\end{figure}

When we have repeated the process $m+1$ times, we have a triangulation of $\overline{M}$ by truncated ideal tetrahedra. By construction, each boundary component $\T_j$, $j=1, \dots, m$, meets exactly two truncated tetrahedra $\Delta_{j,1}$ and $\Delta_{j,2}$ in exactly two ideal vertices. This gives (i).

For (ii), we trace through the gluing data in \reffig{PillowWithTube_Table} and \reffig{PillowWithTube} to find the cusp triangulation of the pillow with pre-drilled tube. These are shown in \reffig{PillowCuspTriangulation}. Note there are two connected components. One is a disk made up of vertex 3 of tetrahedron 0 and vertex 2 of tetrahedron 1. The other is an annulus, made up of the remaining truncated vertices.

\begin{figure}
\begin{center}
  \includegraphics{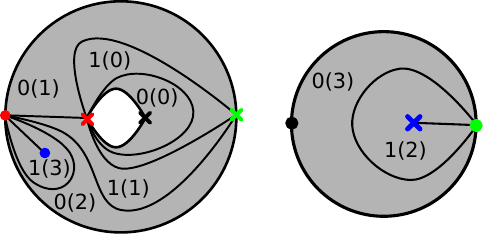}
  \caption{The cusp triangulations of the pillow. Each triangle in the cusp triangulation is labelled, with tetrahedron number (vertex).}
  \label{Fig:PillowCuspTriangulation}
\end{center}
\end{figure}

The cusp triangulation of the manifold $\overline{M}-(B_0\cup\dots\cup B_m)$ consists of two triangles per torus boundary component, along with $m+1$ triangulated 2-spheres. When we add the first pillow, we slice open a triangle, which appears in three edges of the cusp triangulation: one on the torus $\T_0$, and the other two on the boundary of the ball $B_0$. These edges of the cusp triangulation are sliced open, leaving a bigon on $\T_0$ and two bigons on $B_0$. When the pillow is glued in, the bigons are replaced. One, on the boundary of the ball $B_0$, is just filled with the disk on the right of \reffig{PillowCuspTriangulation}. One on $\T_0$ is filled with the annulus on the left of \reffig{PillowCuspTriangulation}. The remaining one, on the boundary of $B_0$, is glued to the inside of the annulus. Thus the cusp triangulation of $\T_0$ is changed by cutting open an edge, inserting an annulus with the triangulation on the left of \reffig{PillowCuspTriangulation}, and inserting a disk into the centre of that annulus with the (new) triangulation of the boundary of $B_0$.

When we repeat this process inductively for each $B_i$, we slice open edges of the cusp triangulation of the adjusted $\T_0$, and add in an annulus and disks corresponding to the triangulation of the boundary of $B_i$. This process only adds triangles; it does not remove or adjust existing triangles, except to separate them by inserting disks.

Now let $\mfm_0$ and $\mfl_0$ be any generators of $H_1(T_0;\Z)$. We can choose representatives that are normal with respect to the triangulation of
\[ \overline{M}-(B_0\cup\dots\cup B_m).\]
At each step, we replace an edge of the triangulation with a disk. However, note that all such disks must be contained within the centre of the first attached annulus. Now suppose $\mfm_0$ runs through the edge that is replaced in the first stage. Then keep $\mfm_0$ the same outside the added disk. Within the disc, let it run from one side to the other by cutting off single corners of triangles 0(2), 1(1), 1(0), and 0(1). The new curve is still a generator of homology along with $\mfl_0$. It meets the same tetrahedra as before, and the two tetrahedra added to form the tube. It does not meet any of the vertices of the tetrahedra of the ball $B_0$. The curve $\mfl_0$ can also be replaced in the same manner, by a curve cutting through the same cusp triangles, parallel to the segment of $\mfm_0$ within these triangles. Inductively, we may replace $\mfm_0$ and $\mfl_0$ at each stage by curves that are identical to the previous stage, unless they meet a newly added disk, and in this case they only meet the disk in triangles corresponding to the added pillow, not in triangles corresponding to tetrahedra meeting other cusps. The result holds by induction. 

Complete the proof by replacing truncated tetrahedra by ideal tetrahedra. 
\end{proof}

%%%%%%%%%%%%%%%%%%%%%%%%%%%%%%%%%%%%%%%%%%%%%%%%%%%%%%%%%%%%%%%%%
\section{Appendix B: Calculations for some twist knots}\label{Sec:AppxCalc}

In \refsec{Whitehead}, we found Ptolemy equations for Dehn fillings of the Whitehead link. In this short section, we explain how to use them and direct substitution to find an A-polynomial. This will not immediately look like the standard A-polynomial, because we have chosen a nonstandard longitude and because our equations have extra factors and square roots. After conjugation and a change of basis, we obtain the usual A-polynomials.

To compute the polynomials, we use the equations corresponding to the tetrahedra $\Delta_0$, $\Delta_1$, and $\Delta_2$ of the Whitehead link that lie outside the cusp we will fill, as in equations \eqref{Eqn:DeltaWhitehead}, as well as the equation $\gamma_{1/0}=1$. Via direct substitution, $\Delta_2$ gives an equation for $\gamma_{3/1}$ in terms of $\gamma_{0(23)}$, which can then be substituted into $\Delta_1$ to give an equation for $\gamma_{2/1}$ in terms of $\ell$, $m$, and $\gamma_{0(23)}$, which can then be substituted into $\Delta_0$ to obtain an equation of $\gamma_{0(23)}$ in terms of $\ell$ and $m$. Substituting this into the equations for $\gamma_{2/1}$ and $\gamma_{3/1}$, we obtain the following.

\begin{align}\label{Eqn:GammaSubsWhitehead}
  \gamma_{0(23)}^2 & = \frac{m\ell^{1/2} +\ell - \ell^{1/2}-m}{\ell^{1/2}m-\ell^{1/2}}  \\
  \nonumber
  \gamma_{2/1} & = \frac{1}{\gamma_{0(23)}}\frac{m^2-\ell}{m^{1/2}\ell^{1/2}(1-m)}\\
  \nonumber
  \gamma_{3/1} & = \frac{\ell-m}{\ell^{1/2}(1-m)}\\
  \nonumber
  \gamma_{1/0} & = 1
\end{align}

Note we have left $\gamma_{0(23)}$ in the equation for $\gamma_{2/1}$ for now, since it is a square root with possible positive or negative sign.

We obtain two more Ptolemy equations from equation \eqref{Eqn:LSTWhitehead1}; the first gives us $\gamma_{1/1}$ in terms of $m$ and $\ell$.
\begin{equation}\label{Eqn:Gamma11Whitehead}
\gamma_{1/1} = \frac{\ell^{1/2} - m^2}{(-1+\ell^{1/2})m}
\end{equation}
We can then use the second to solve for $\gamma_{0/1}$ in terms of $m$ and $\ell$ (and $\gamma_{0(23)}$).
\begin{equation}\label{Eqn:Gamma01Whitehead}
\gamma_{0/1} = -\gamma_{0(23)}\frac{\ell^{1/2}(-1+m)^2(1+m)}{(-1+\ell^{1/2})^2m^{3/2}}
\end{equation}

\subsection{Figure-8 knot}
An A-polynomial for the Figure-8 knot is now obtained by setting $\gamma_{0/1}=\gamma_{1/0}=1$. To remove (some of) the square roots coming from the $\gamma_{0(23)}$ term, square both sides of equation~\eqref{Eqn:Gamma01Whitehead}, obtaining
\[ 1 = \frac{\ell^{1/2}(-1+m)^3(1+m)^2(\ell^{1/2}+m)}{(-1+\ell^{1/2})^3m^3}
\]
Multiplying through the denominator and moving all terms to the left hand side, we obtain the following $\PSL$ A-polynomial.
\[
(\ell^{1/2}-m^2)(\ell^{1/2}+m-\ell^{1/2} m -2\ell^{1/2}m^2 - \ell^{1/2} m^3+ \ell m^3 + \ell^{1/2}m^4)
\]

This will not give the usual $\PSL$ A-polynomial for the figure-8 knot, because our choice of longitude $\mfl$ differs from the standard longitude. 
In fact, checking against SnapPy~\cite{SnapPy}, the red curve shown in \reffig{WhiteheadCusp} is isotopic to the ``shortest'' curve intersecting the meridian once, under the Euclidean metric inherited from the hyperbolic structure. Thus the standard longitude differs from that shown by subtracting two meridians. 
Propositions~5.11 and~5.12 of \cite{HMPT:WhiteheadSister} then give the required change of basis for any Dehn filling of the Whitehead link.
For the figure-8 knot, the required change of basis is
\[ (\ell,m) \mapsto (\ell m^{-2}, m), \]
and after clearing the denominator, the PSL A-polynomial becomes:
\[(\ell^{1/2}-m^3)(m^2 + \ell^{1/2}(1 -m -2 m^2 - m^3 + m^4) +\ell m^2)\]

Following \refcor{Ptolemy_SL2C_Apoly}, we note that the second factor gives the usual $\SL$ A-polynomial when we take $L=-\ell^{1/2}$ and $M=m^{1/2}$; compare to~\cite{Culler}. 
\[
(-L-M^6)(M^4 - L(1-M^2-2M^4-M^6+M^8) + L^2M^4)
\]

\subsection{The $5_2$ knot}
An A-polynomial for the $5_2$ knot is obtained by setting $\gamma_{0/1}=\gamma_{1/1}$. Set equation~\eqref{Eqn:Gamma11Whitehead} equal to \eqref{Eqn:Gamma01Whitehead}, square both sides and subtract, to obtain the following $\PSL$ A-polynomial for the $5_2$ knot:
\[
\ell + \ell^{1/2}m - 2\ell m + \ell^{3/2}m-\ell^{1/2}m^2 - 2\ell m^2 + 2\ell^{1/2}m^4 + \ell m^4 - m^5 + 2\ell^{1/2}m^5-\ell m^5 - \ell^{1/2} m^6
\]

Again we change the basis via $(\ell,m)\mapsto(\ell m^{-2}, m)$, and clear the denominator:
\[
\ell + \ell^{3/2} - 2m\ell  + m^2(\ell^{1/2}-2\ell) - m^3\ell^{1/2} + m^4\ell + m^5(2\ell^{1/2}-\ell) + 2m^6\ell^{1/2} + m^7(-1-\ell^{1/2})
\]

To obtain the $\SL$ A-polynomial, following \refcor{Ptolemy_SL2C_Apoly}, we set $L= \pm \ell^{1/2}$ and $M=\pm m^{1/2}$. Again, $L=-\ell^{1/2}$ does the trick. To obtain a formula matching that of Culler~\cite{Culler}, we then need to map $L$ to $L^{-1}$, which corresponds to considering the mirror image of the $5_2$ knot. After clearing denominators and multiplying through by $-1$, the result is:
\[
1 - L(1-2M^2-2M^4+M^8-M^{10}) - L^2M^4(-1+M^2-2M^6-2M^8+M^{10}) + L^3M^{14}
\]

\small

\bibliography{biblio}
\bibliographystyle{amsplain}

\end{document}